\definecolor{red}{rgb}{1,0,0}
\definecolor{green}{rgb}{0,1,0}
\definecolor{blue}{rgb}{0,0,1}
\definecolor{refkey}{gray}{.625}
\definecolor{labelkey}{gray}{.625}
\newcommand{\abs}[1]{\lvert#1\rvert}
\newcommand{\id}{\operatorname{id}}
\newcommand{\R}{\mathbb{R}}
\newcommand{\C}{\mathbb{C}}
\newcommand{\Z}{\mathbb{Z}}
\newcommand{\N}{\mathbb{N}}
 \def\@textbottom{\vskip \z@ \@plus 1pt}
 \let\@texttop\relax
  \def\title@font{\normalsize\bfseries}
 \let\ltx@maketitle\@maketitle
 \def\@maketitle{\bgroup
 \let\ltx@title\@title
 \def\@\title{\resizebox{\textwidth}{!}{
  \mbox{\title@font\ltx@title}
 }}
 \ltx@maketitle
 \egroup}
\theoremstyle{plain}
\newtheorem{thm}[equation]{Theorem}
\newtheorem{lem}[equation]{Lemma}
\newtheorem{cor}[equation]{Corollary}
\theoremstyle{definition}
\newtheorem{defn}[equation]{Definition}
\newtheorem{example}[equation]{Example}
\newtheorem{prop}[equation]{Proposition}
\newtheorem{Ex}[equation]{Example}
\newtheorem{Rem}[equation]{Remark}
\numberwithin{equation}{section}
\begin{document}
\def\CE{\mathrm{CE}}
\def\D{\mathcal{D}}
\def\E{\mathscr{E}}
\def\RR{\mathscr{R}}
\def\F{\mathscr{F}}
\def\H{\textbf{H}}
\def\k{\mathbb{K}}
\def\G{\mathcal{G}}
\def\M{\mathcal{M}}
\def\O{\mathcal{O}}
\def\img {\operatorname{img}}
\def\coker{\operatorname{coker}}
\def\End{\operatorname{End}}
\def\pr{\operatorname{pr}}
\def\Lie{\operatorname{Lie}}
\def\id{\operatorname{id}}
\def\Der{\operatorname{Der}}
\def\Hom{\operatorname{Hom}}
\def\bas{\operatorname{bas}}
\def\Map{\operatorname{Map}}
\def\Mod{\operatorname{Mod}}
\def\sgn{\operatorname{sgn}}
\def\sh{\operatorname{sh}}
\newcommand{\dst}{d_{E}}
\newcommand{\dr}{d_{\operatorname{reg}}}
\newcommand{\dnv}{d_{\operatorname{nv}}}
\newcommand{\Cr}{C_{\operatorname{reg}}}
\newcommand{\Cst}{C_{\operatorname{st}}}
\newcommand{\Cnv}{C_{\operatorname{nv}}}
\newcommand{\dstone}{d_{\CE}}
\newcommand{\dsttwo}{d_{T}}
\newcommand{\dCE}{d_{\operatorname{CE}}}
\newcommand{\Transgression}{d_T}
\newcommand{\Ker}{\operatorname{Ker}}
\newcommand{\fieldK}{\mathbb{K}}
\newcommand{\cinfM}{C^{\infty}(M)}
\newcommand{\coanchor}{\varrho}
\newcommand{\HH}{\mathcal{H}}
\newcommand{\rhothree}{\widetilde{\rho}}
\newcommand{\Cnablathree}{C_\nabla}

\title{The Standard cohomology of regular Courant algebroids}

\author{Xiongwei Cai}
\address{School of Mathematics and Statistics, Minnan Normal University, Zhangzhou, China}
\email{\href{mailto:cxw2620@mnnu.edu.cn}{cxw2620@mnnu.edu.cn}}

\author{Zhuo Chen}
\address{Department of Mathematics, Tsinghua University, Beijing, China}
\email{\href{mailto:chenzhuo@tsinghua.edu.cn}{chenzhuo@tsinghua.edu.cn}}

\author{Maosong Xiang}
\address{School of Mathematics and Statistics, Center for Mathematical Sciences, Huazhong University of Science and Technology, Wuhan, China}
\email{\href{mailto: msxiang@hust.edu.cn}{msxiang@hust.edu.cn}}
\thanks{Research partially supported by Natural Science Foundation of Fujian Province 2022J01894 (Cai), NSFC grant 12071241 (Chen) and NSFC grant 11901221 (Xiang).}

\begin{abstract}
For any regular Courant algebroid $E$ over a smooth manifold $M$ with characteristic distribution $F$ and ample Lie algebroid $A_E$, we prove that there exists a canonical homological vector field on the graded manifold $A_E[1] \oplus (TM/F)^\ast[2]$ such that the resulting dg manifold $\M_E$, which we call the minimal model of the Courant algebroid $E$, encodes all cohomological information of $E$. Indeed, the standard cohomology of $E$ can be identified with the cohomology of the function space on $\M_E$, which can be computed by a Hodge-to-de Rham type spectral sequence. We apply this result to generalized exact Courant algebroids and those arising from regular Lie algebroids.
\end{abstract}

\maketitle

\noindent  {\it Keywords:} \hspace*{0.6cm}
  Courant algebroid, ~symplectic graded manifold, ~standard cohomology, \\
  \hspace*{2.5cm}~~minimal model, semifull algebra contraction.\\
\noindent  {\it MSC(2020):}\hspace*{0.5cm}  primary 58A50; secondary 53D17, 16E45, 17B63,17B70, 18G40.

\tableofcontents

\section{Introduction}
This paper introduces a minimal model of regular Courant algebroids and exploits it in order to compute the standard cohomology of regular Courant algebroids.

The concept of Courant algebroids started with Courant and Weinstein's study on Dirac manifolds via the bracket  defined in \cites{C-W-1986, C1990}, now known as Courant bracket. Around the same time and developed independently,  the  bracket introduced by Dorfman~\cite{D1993} in the context of  Hamiltonian structures, and similarly the one used by Cabras and Vinogradov~\cite{CabrasVinogradov1992}, are equivalent versions of the Courant bracket.  Courant and Weinstein's earlier work eventually led to the definition of Courant algebroids by  Liu,  Weinstein and Xu in~\cite{L-W-X1997}.
Since then, Courant algebroids  have been a fertile source of inspirations in the study of higher structures~\cites{U2013,BR2018}, Poisson geometry~\cite{MR2507112},  generalized complex geometry~\cites{N.Hitchin2003, M.Gualtieri2011},  while also having some important applications to the study of $3$-dimensional topological field theory~\cites{Ikeda2001, Ikeda2003,Park2001,HP2004, Roy2007}, string theory~\cites{D2015, JV2016}, T-duality~\cites{S2015,SV2017}, and double field theory~\cites{DS2015,HZ2009}.  We refer to~\cite{MR3033556} for a thorough survey of Courant algebroids from their initial appearance to their applications and generalizations.

Roughly speaking, a Courant algebroid $(E,g,\rho,\circ)$ is a pseudo-Euclidean vector bundle $(E,g)$ over a smooth manifold $M$ together with a morphism of vector bundles $\rho\colon  E \to TM$, called \textit{anchor}, and a Leibniz algebra structure $\circ$, called \textit{Dorfman bracket}, on the section space $\Gamma(E)$, satisfying some compatibility conditions.
There is an alternative approach from differential graded (dg for short) geometry by \u{S}evera \cite{S2005} and Roytenberg \cite{Roytenberg2001}, which we now recall. Given a pseudo-Euclidean vector bundle $(E, g)$, the pair $(E[1], g)$ becomes a degree $(-2)$ Poisson manifold.
According to Rothstein~\cite{Rothstein}, by choosing  a metric connection $\nabla$ on $(E,g)$, the \textit{minimal symplectic realization} $\E$ of $(E[1], g)$ is identified with an exact degree $2$ symplectic manifold  $(T^\ast[2]M \oplus E[1], \omega_{g,\nabla})$ (see also~\cite{GMP}). The associated degree $(-2)$ Poisson algebra $(C^\infty(\E), \{-,-\})$ is known as the \textit{Rothstein algebra}. Here $\{-, -\}$ denotes the degree $(-2)$ Poisson bracket, which is called the big bracket in~\cite{YKS1992}.
A Courant algebroid structure on $(E, g)$ is encoded by a degree $3$ function $\Theta$ (known as the \textit{generating Hamiltonian}) on $\E$, subject to the classical master equation $\{\Theta, \Theta\} = 0$.  The anchor $\rho$ and the Dorfman bracket $\circ$ of $E$ are recovered from $\Theta$ via a Vinogradov bracket (or derived bracket) (see~\cites{AX, YKS, Roytenberg2001}).
This approach leads to the definition of the standard complex and cohomology of a Courant algebroid $E$ in~\cite{Roytenberg2001}:
The Hamiltonian vector field $X_\Theta := \{\Theta, -\}$ is a homological vector field on the graded manifold $\E$, and thus defines a differential $\dst :=X_{\Theta}$ on the algebra $C^\infty(\E)$ of functions on $\E$. The cochain complex $(C^\infty(\E),\dst)$ is called the \textit{standard complex} of $E$, and its cohomology $H_{\mathrm{st}}^\bullet(E)$ is called the \textit{standard cohomology} of the Courant algebroid $E$.

In a pure algebraic setting, Keller and Waldmann \cite{KW} introduced for any Courant algebroid a canonical dg Poisson algebra of degree $(-2)$, which is indeed isomorphic to the Rothstein dg algebra in the smooth setting (see also~\cite{CM2019}).
Keller-Waldmann's construction is closely related to the work of Roytenberg~\cite{Roy2009}, where another cochain complex is constructed for each Courant-Dorfman algebra, an algebraic version of Courant algebroid with fewer assumptions compared to the original definition in geometry.
Roytenberg showed that, in the smooth setting, this complex is also isomorphic to the standard complex; see~\cites{KW, Roy2009} for a further comparison between their approaches.

The mathematical significance of the standard cohomology of a Courant algebroid $E$ is reflected by the structural interpretations of some lower orders~\cite{Roytenberg2001} (see also~\cite{KW}).
Meanwhile, it is helpful for the study of the 3 dimensional Courant sigma model~\cite{Roy2007}:
The Batalin-Vilkovisky formalism~\cite{BV} provides a powerful tool for quantization of gauge theories. For example, the AKSZ's construction~\cite{AKSZ} of solutions of the classical master equation is a standard process to produce classical topological field theories from mathematical objects, and in return, many topological invariants are revealed from mathematical perspectives (cf.~\cite{GLL}).
Since cohomology of the target dg manifold naturally defines a set of classical observables in the associated AKSZ theory (cf.~\cites{BMZ,CQZ}), it follows that the standard cohomology of the Courant algebroid $E$ gives rise to a set of classical observables of the associated Courant sigma model.

Although the definition is simple and natural, the standard cohomology $H_{\mathrm{st}}^\bullet(E)$ of a Courant algebroid is very difficult to compute.
As an early attempt, Sti\'{e}non and Xu defined in~\cite{SX2008} the naive complex $(C_{\operatorname{naive}}^\bullet(E), \breve{d})$ of a Courant algebroid $E$, and they proved that  the corresponding degree $1$ naive cohomology $H_{\operatorname{naive}}^1(E)$ is isomorphic to the standard cohomology $H_{\operatorname{st}}^1(E)$. Later, Ginot and Gr\"{u}tzmann proved in~\cite{GM2009} that the naive cohomology of any transitive Courant algebroid is isomorphic to its standard cohomology, which was first conjectured by Sti\'{e}non and Xu in~\emph{op.cit.}.
However, for general Courant algebroids, the two cohomology theories are quite different.
Another achievement of \cite{GM2009} is the construction of a Leray type spectral sequence, called the naive ideal spectral sequence, which is used to calculate the standard cohomology $H_{\operatorname{st}}^\bullet (E)$ under the assumption that $E$ has a \textit{split base}.

We focus on regular Courant algebroids in this paper. A Courant algebroid $E$ is said to be \textit{regular} if its anchor $\rho$ has constant rank. In this case, $F:= \rho(E) \subseteq TM$ defines an integrable distribution, which we call the \textit{characteristic distribution} of $E$.
Note that a Courant algebroid $(E,g,\rho,\circ)$ with split base is  necessarily regular. The converse is not true in general.
For example, let $\mathbb{T}^2$ be the smooth $2$-torus and $F \subset T\mathbb{T}^2$ the rank $1$ integrable distribution generated by a constant vector $(1,\nu)\in \mathbb{R}^2$. Consider the regular Courant algebroid structure on $E:= F \oplus F^\ast$ induced by $F$. It is apparent that $E$ has split base if and only if $\nu$ is rational.

Given a regular Courant algebroid  $(E,g,\rho,\circ)$ over $M$ with characteristic distribution $F$,
we inherit some foundational facts and notations from the joint work~\cite{CSX} of the second author with Sti\'enon and Xu. A particularly important object is the \emph{ample Lie algebroid} $A_E$ (also called the associated Lie algebroid in~\cite{BR2018}), which shares the same characteristic distribution $F \subseteq TM$ with that of $E$.
To see the relation between the regular Courant algebroid $E$ and its ample Lie algebroid $A_E$, we consider the graded manifold $\RR:= F^\ast[2] \oplus E[1]$. It turns out that the standard differential $d_E$ on $E$ defines a differential on the algebra $C^\infty(\RR)$ of smooth functions on $\RR$ as well. We call $(C^\infty(\RR), d_E)$ the \emph{regular dg algebra} of $E$, which is indeed a dg subalgebra of the Rothstein dg algebra $(C^\infty(\E), d_E)$.
We prove that there exists a semifull algebra contraction from the regular dg algebra of $E$ onto the Chevalley-Eilenberg dg algebra of the ample Lie algebroid $A_E$ (see Theorem~\ref{contractionCA}).
Hence, the corresponding cohomology $H^\bullet(C^\infty(\RR), d_E)$, which we call the \textit{regular cohomology} of $E$, is indeed isomorphic to the Chevalley-Eilenberg cohomology $H^\bullet_{\CE}(A_E)$ of the ample Lie algebroid $A_E$,  which is also isomorphic to the naive cohomology of $E$ according to~\cite{CSX}.
Note that if $E$ is a transitive Courant algebroid, then its regular dg algebra coincides with its Rothstein dg algebra. As an immediate application, we recover a conjecture of Sti\'{e}non and Xu in~\cite{SX2008}, which was firstly proved by Ginot and Gr\"{u}tzmann in~\cite{GM2009}, on the isomorphism between the standard cohomology and the naive cohomology for transitive Courant algebroids.

To compute the standard cohomology of general regular Courant algebroids, we observe that the quotient bundle $B:=TM/F$ is canonically a Lie algebroid $A_E$-module. The Chevalley-Eilenberg differential $d_{\CE}$ of this $A_E$-module $B$ determines a homological vector field on the graded manifold $A_E[1] \oplus B^\ast[2]$. However, the dg manifold $(A_E[1] \oplus B^\ast[2], d_{\CE})$ contains only partial information of the Rothstein dg algebra $(C^\infty(\E), d_E)$. We prove in Proposition~\ref{Lem:Transgression} that for each regular Courant algebroid $E$ there exists a canonical cohomology class $[d_T] \in H^3_{\CE}(A_E; B^\ast[2])$ of total degree $1$, which characterizes whether the Courant algebroid structure around each leaf $L$ of $F$ is determined by its restriction $E\mid_{L}$ (See also Theorem~\ref{Thm: geometric meaning of dT}).
Each representative $d_T$ of this class, viewed as a map from $C^\bullet(A_E; S^{\diamond+1}(B[-2]))$ to $C^{\bullet+3}(A_E; S^\diamond(B[-2]))$, defines a perturbation of the homological vector field $d_{\CE}$, and thus gives rise to a new dg manifold $(A_E[1] \oplus B^\ast[2], d_{\CE} + d_T)$.
Since there exists a semifull algebra contraction of the Rothstein dg algebra $(C^\infty(\E), d_E)$ of $E$ onto the algebra $(C^\infty(A_E[1] \oplus B^\ast[2]), d_{\CE} + d_T)$ of functions (see Theorem~\ref{Thm: semifull contraction for Rothstein algebra}), this new dg manifold indeed encodes all cohomological data of $E$.
Note that the corresponding $L_\infty$-algebroid structure on the graded vector bundle $A_E \oplus B^\ast[1]$ over $M$ is minimal. We call this new dg manifold  \textbf{the minimal model} of the Courant algebroid $E$ and denote it by $\mathcal{M}_E$. Our construction of such a minimal model is indeed unique up to isomorphism (see Proposition~\ref{Cor: uniqueness of minimal model}). Moreover, it admits a $2$-shifted derived Poisson structure in the sense of~\cite{BCSX} (see Proposition ~\ref{prop: 2-shifted derived Poisson}).

As a consequence, the standard cohomology of $E$ is isomorphic to the cohomology of its minimal model $\M_E$. In order to compute the cohomology of $\M_E$, we construct a non-positive filtered bounded decreasing filtration on $C^\infty(\M_E)$. The associated Hodge-to-de Rham type spectral sequence, which we call the \textit{Chevalley-Eilenberg-to-standard spectral sequence}, converges to $H^\bullet(\M_E)$.

Then we consider applications of the minimal model of regular Courant algebroids to three particular types of regular Courant algebroids.
The first type consists of Courant algebroids with split base.
Secondly, we consider \textit{generalized exact Courant algebroids}, i.e., those fitting into the following exact sequence of vector bundles over $M$
\[
0 \to F^\ast \xrightarrow{\rho^\ast} E  \xrightarrow{\rho}  F \to 0,
\]
where $F \subseteq TM$ is the characteristic distribution of $E$, and $F^\ast$ is its dual bundle.  In this case, the ample Lie algebroid $A_E$ coincides with $F$. It is proved in~\cite{GM2009} that generalized exact Courant algebroids with characteristic distribution $F$ are classified up to isomorphisms by a degree $3$ cohomology class $[C]$, called \u{S}evera class, in $H^3_{\CE}(F)$. It turns out that the cohomology class $[d_T] \in H^3_{\CE}(F; B^\ast[2])$ is given by a canonical covariant derivative $\nabla [C]$ of the \u{S}evera class $[C]$ along normal vector fields $\Gamma(B)$. Hence, we obtain an explicit description of the standard cohomology of generalized exact Courant algebroids in Theorem~\ref{Thm:application to GEC}.
Finally, we study regular Courant algebroids arising from regular Lie algebroids.
Let $(A, \rho_A, [-,-]_A)$ be a regular Lie algebroid, i.e., a Lie algebroid whose characteristic distribution $F=\rho_A(A) \subseteq TM$ is of locally constant rank. We call the bundle of Lie algebras $K := \ker \rho_A$ the characteristic bundle of $A$.  It is well-known that the Whitney sum $E = A \oplus A^\ast$ admits a canonical regular Courant algebroid structure. In this case, the cohomology class $[d_T] \in H^3_{\CE}(A_E; B^\ast[2])$ is isomorphic to the cohomology class $[\omega] \in H^2_{\CE}(A; \Hom(B,K))$ introduced by Gracia-Saz and Mehta in~\cite{GsM} to characterize the obstruction to a local trivializability of $A$ around any leaf $L$ of $F$. As a result, we obtain an explicit description of the standard cohomology of $E=A \oplus A^\ast$, which is indeed isomorphic to the cohomology of the shifted cotangent bundle $T^\ast[2](A[1])$ of the dg manifold $(A[1],d_A)$ arising from the regular Lie algebroid $A$; see Theorem \ref{Thm:Regular Lie algebroid} and Remark \ref{Rem:Aone}.

We would like to point out works of others that are related to the present paper. In fact, many insights and clues for the present paper appeared in our previous work~\cite{CLX} on cohomology of hemistrict Lie 2-algebras, i.e. 2-term dg Leibniz algebras whose bracket is skew-symmetric up to homotopy.
Recently, Cueca and Mehta~\cite{CM2019} found that the differential of the aforementioned Keller-Waldmann construction satisfies a Cartan-type formula, and it thus provides a way to relate Courant algebroid cohomology to classical de Rham or Chevalley-Eilenberg cohomology theories. This also led them to a thorough study of characteristic classes of Courant algebroids.
Finally, we would like to mention Jotz Lean's work~\cite{Jotz0} on Dorfman connections and their applications to linear splittings of the standard Courant algebroids over vector bundles, which could be related to the present paper.
\paragraph{\textbf{Outline of the paper}}
This paper is organized as follows:
In the preliminary Section~\ref{Sec:BasicStuff}, we establish some global conventions for our notations and recall the definition of standard cohomology of Courant algebroids.
In Section \ref{Sec:1regularcomplex}, we define the regular dg algebra of a regular Courant algebroid, and construct a semifull algebra contraction from the regular dg algebra onto the Chevalley-Eilenberg dg algebra of the ample Lie algebroid.
In Section \ref{Sec:2results}, we first prove the existence of a minimal model for each regular Courant algebroid by extending the semifull algebra contraction for the regular dg algebra to a semifull algebra contraction for the whole Rothstein dg algebra.
As an immediate application, we prove that the minimal model $\M_E$ of $E$ indeed admits a $2$-shifted derived Poisson manifold structure.
Then we compute the standard cohomology of a regular Courant algebroid $E$ by analysing the Chevalley-Eilenberg-to-standard spectral sequence for the minimal model $\M_E$ of $E$.
In Section \ref{Sec:Applications}, we consider applications to three types of regular Courant algebroids.

\paragraph{\bf Acknowledgement}
We would like to thank Jiahao Cheng, Gregory Ginot, Melchior Gr\"{u}tzmann, Luca Vitagliano, and Ping Xu for helpful discussions and comments.
We are also grateful to the anonymous referee for constructive suggestions to improve the presentation of the manuscript.

\section{Preliminaries}\label{Sec:BasicStuff}
In this section, we collect some standard facts of Courant algebroids and their  formulation via degree $2$ symplectic graded manifolds with generating Hamiltonian functions \cites{Roytenberg2001,Roy2007}. Our presentation mainly follows~\cites{GMP}.

\subsection{Symplectic graded manifolds of degree 2}
An $\N$-graded manifold $\mathcal{M}$ is a connected\footnote{In this paper, all smooth manifolds are assumed to be connected.} smooth manifold $M$, called the base of $\mathcal{M}$, equipped with a sheaf $\O$ of $\N$-graded $C^\infty(M)$-algebras such that, for every contractible open set $U \subset M$, the algebra of functions $\O(U)$ is isomorphic to $C^\infty(U) \otimes \widehat{S}(W)$ for some fixed vector space $W = \oplus_{i \geqslant  1} W^i$ which is finite dimensional and positively graded. Here $\widehat{S}(W)   $ is the complete graded symmetric tensor product of $W$ and the grading on $\O(U)$ is induced by the one on $W$. In particular, the degree $0$ component of $\O(U)$ is $C^\infty(U)$. The algebra of global sections of $\O$  is denoted by $C^\infty(\M)$.

Let us  consider ($\N$)-graded manifolds stemming from ordinary ungraded vector bundles. Given a smooth vector bundle $V \to M$   and a non-negative integer $k$, we denote by $V[k]$ the graded manifold   whose base   is $M$ and algebra of global functions is
\[
 C^\infty(V[k]) = \Gamma(\widehat{S}   (V^\ast[-k]))
 \cong \begin{cases}
                                         \Gamma(\oplus_{n\geqslant  0}(\wedge^n V^\ast) ) , & \mbox{if $k$ is odd}, \\
                                         \Gamma(\Pi_{n\geqslant  0}(S^nV^\ast) ), & \mbox{if $k$ is even}.
                                         \end{cases}
\]
Meanwhile, $V[k]$ is also treated as a graded vector bundle over $M$ concentrated in   degree $(-k)$. For $e \in  V$ or $\Gamma(V)$,  the corresponding element in $V[k]$ or $\Gamma(V[k])$   will be denoted by $e[k]$. In the sequel, we will simply  identify $\wedge^n V^\ast$ with $S^n (V^\ast[-1])$ and hence $C^\infty(V[1])=\Gamma(\wedge^\bullet V^\ast)$.  In particular, $\wedge^1 V^\ast$ and $V^\ast[-1]$ are one and the same.  An element in $\wedge^n V^\ast$ will be denoted indicatively by $\xi_1[-1]\odot \cdots \odot \xi_n[-1]$ or $\xi_1\wedge\cdots \wedge\xi_n$ for all $\xi_i \in V^\ast$, if there is no risk of confusion of degrees.

A symplectic graded manifold of degree $n$ consists of a graded manifold $\M$ and a closed non-degenerate $2$-form $\omega$ of degree $n$ on $\M$. Then the algebra $C^\infty(\M)$ of smooth functions on $\M$ admits a degree $(-n)$ Poisson bracket which we denote by
\[
 \{-,-\} \colon C^\infty(\M) \times C^\infty(\M) \to C^\infty(\M)[-n],
\]
and it is subject to  the   graded skew-symmetric condition
\begin{align*}
  \{F,G\} &= -(-1)^{(\abs{F}-n)(\abs{G}-n)}\{G,F\},
  \end{align*}
  and the graded Jacobi identity
  \[
   \{F,\{G,H\}\}  = \{\{F,G\},H\} + (-1)^{(\abs{F}-n)(\abs{G}-n)}\{G,\{F,H\}\},
\]
for all homogeneous $F, G, H \in C^\infty(\M)$.

In what follows, we recall a construction of symplectic graded manifolds of degree $2$ due to Rothstein \cite{Rothstein}, Roytenberg \cite{Roytenberg2001}, and Gr\"{u}tzmann-Michel-Xu \cite{GMP}.
In~\cite{Rothstein}, Rothstein described a class of symplectic supermanifolds in terms of a pseudo-Euclidean vector bundle $(E,g)$ over a symplectic manifold together with a metric connection $\nabla$ on $(E,g)$ (i.e., $\nabla g = 0$).
In~\cite{GMP},  Rothstein's construction is adapted to obtain a symplectic structure of degree $2$ on the graded manifold given by the Whitney sum $T^\ast[2] M \oplus E[1]$ over $M$.
\begin{prop}[\cites{Rothstein, GMP}]\label{Prop:Rothstein algebra}
  Let $\pi \colon E \to M$ be a smooth vector bundle  equipped with a pseudo-Euclidean metric $g$ and a metric connection $\nabla$. Then the graded manifold $\E=T^\ast[2] M \oplus E[1]$ admits a degree $2$ symplectic $2$-form
\[
 \omega_{g,\nabla}:= d(\pi_1^\ast \alpha + \pi_2^\ast \beta),
\]
where
\begin{enumerate}
	\item  $\pi_1$ and $\pi_2$ are, respectively, the canonical projections from $\E$ onto $T^\ast[2] M$ and $E[1]$;
	\item $\alpha \in \Omega^1(T^\ast[2]M)$ is the degree $2$ Liouville $1$-form on $T^\ast[2]M$; and
	\item $\beta \in \Omega^1(E[1])$   annihilates the horizontal subspace of $\Gamma(TE[1])$ lifted from $\Gamma(TM)$ via the chosen metric connection $\nabla$, and satisfies $\beta_e(v ) = \frac{1}{2}g_{\pi(e)}(v ,e)$ for all vertical tangent vectors $v  \in T_e E[1]$.
\end{enumerate}
\end{prop}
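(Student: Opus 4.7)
The plan is to reduce the assertion to three separate checks: that $\omega_{g,\nabla}$ genuinely lives on $\mathcal{E}$ as a degree $2$ $2$-form, that it is closed, and that it is non-degenerate. Closedness is free, since by construction $\omega_{g,\nabla}=d(\pi_1^\ast\alpha+\pi_2^\ast\beta)$ is exact. So the real content lies in setting up $\beta$ carefully and then doing a local non-degeneracy computation.

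First I would verify that $\beta$ is a well-defined global $1$-form of degree $2$ on $E[1]$. The metric connection $\nabla$ yields a horizontal distribution $H^\nabla\subset TE$ complementary to the vertical bundle $V=\ker d\pi$. Pulling back along the canonical identification of $E[1]$ with $E$ as graded spaces (shifted in fiber degree) produces a splitting $TE[1]=H^\nabla\oplus V$, where vertical tangent vectors at $e\in E[1]$ are canonically identified with elements of $E_{\pi(e)}[1]$. Declaring $\beta$ to vanish on $H^\nabla$ and to be $\beta_e(v)=\tfrac12 g_{\pi(e)}(v,e)$ on vertical vectors gives a globally defined $1$-form; since both $v$ and $e$ sit in $E_{\pi(e)}[1]$ (each carrying degree $1$ in the functions-on-$E[1]$ picture) the value is in degree $2$, so $\beta$ has degree $2$. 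Combined with the degree $2$ Liouville $1$-form $\alpha$, the sum $\pi_1^\ast\alpha+\pi_2^\ast\beta$ has degree $2$, hence $\omega_{g,\nabla}$ has degree $2$ as claimed.

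Next I would write everything in local coordinates. Choose a chart $(x^i)$ on $M$ and a local frame $(s_a)$ of $E$, with $\xi^a$ the induced fiber coordinates on $E[1]$ (degree $1$) and $p_i$ the canonical conjugate coordinates on $T^\ast[2]M$ (degree $2$). Let $\omega^a_b=(\omega^a_b)_i\,dx^i$ be the connection $1$-forms of $\nabla$ and $g_{ab}=g(s_a,s_b)$ the metric components. Then $\pi_1^\ast\alpha=p_i\,dx^i$ and
\[
\pi_2^\ast\beta=\tfrac12 g_{ab}\,\xi^a\bigl(d\xi^b+\omega^b_c\,\xi^c\bigr).
\]
Taking $d$ and using the metric-compatibility identity $\partial_i g_{ab}=g_{cb}(\omega^c_a)_i+g_{ac}(\omega^c_b)_i$ to absorb $dg_{ab}$ into the connection contributions, one obtains
\[
\omega_{g,\nabla}=dp_i\wedge dx^i+g_{ab}\,d\xi^a\wedge d\xi^b+\text{(mixed $dx$–$d\xi$ and $\xi\xi$ curvature terms)}.
\]

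Finally, I would verify non-degeneracy from this expression. Pairing the basic derivations $\partial/\partial x^i,\ \partial/\partial p_j,\ \partial/\partial\xi^a$ via $\omega_{g,\nabla}$, the diagonal blocks are the standard pairing $dp_i\wedge dx^i$ (invertible) and the fiber metric block $g_{ab}$ (invertible since $g$ is pseudo-Euclidean), while the connection- and curvature-induced mixed terms contribute only to strictly off-diagonal blocks that cannot destroy invertibility of this block-triangular system. Hence the matrix of $\omega_{g,\nabla}$ at every point is invertible, so $\omega_{g,\nabla}$ is non-degenerate, finishing the proof. The only genuinely delicate step is the local bookkeeping that identifies $d\beta$ correctly and confirms that the connection and curvature corrections sit off the diagonal; once this is organized, non-degeneracy is purely linear algebra.
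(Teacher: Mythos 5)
Your argument is sound, but note that the paper does not actually prove this proposition: it is imported verbatim from Rothstein and from Gr\"utzmann--Michel--Xu (the cited \emph{op.~cit.} carry out exactly the construction you describe), so there is no in-paper proof to measure you against. Your three-step reduction (exactness gives closedness; global well-definedness and degree of $\beta$ via the horizontal/vertical splitting induced by $\nabla$; local non-degeneracy) is the standard route and is correct. The local expression $\pi_2^\ast\beta=\tfrac12 g_{ab}\,\xi^a(d\xi^b+\omega^b_c\xi^c)$ is right, metric compatibility does absorb the $dg_{ab}$ terms, and your kernel argument for the block matrix is valid: since $\partial/\partial p_i$ pairs nondegenerately with $\partial/\partial x^j$ and with nothing else, and the $\xi$-$\xi$ block is the invertible $g_{ab}$, the connection and curvature entries in the remaining blocks cannot create a kernel. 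Two small points you gloss over but should be aware of: the Koszul signs in computing $d\bigl(\xi^a(d\xi^b+\omega^b_c\xi^c)\bigr)$ with $\xi^a$ odd (these are what make $g_{ab}\,d\xi^a\wedge d\xi^b$ survive rather than cancel, since $d\xi^a\wedge d\xi^b$ is symmetric in $a,b$), and the check that the induced bundle map $T\E\to T^\ast\E[2]$ respects degrees ($\partial/\partial p_i$ of degree $-2$ goes to $-dx^i$ of degree $0$, $\partial/\partial\xi^a$ of degree $-1$ goes to $g_{ab}\,d\xi^b+\cdots$ of degree $1$), which is automatic from the weight-$2$ homogeneity of $\omega_{g,\nabla}$ but is the precise meaning of ``degree $2$ symplectic.'' Neither is a gap in the approach.
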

The algebra $C^\infty(\E)$ is generated by elements in $C^\infty(M)$, $\Gamma(E^\ast [-1])$, and $\Gamma(T[-2]M)$. In the sequel, we make repeated use of the identification of $\Gamma(E^\ast[-1])$ and $\Gamma(E[-1])$ induced by $g$.
The degree $(-2)$ Poisson algebra structure $\{-,-\}$ on $C^\infty(\E)$ that is induced from the symplectic structure $\omega_{g,\nabla}$ in Proposition~\ref{Prop:Rothstein algebra} is determined by the following relations:
\begin{align}\label{GlobalPoisson}
  \{X[-2],f\} &= X(f), & \{X[-2],e_1[-1]\} &= (\nabla_X e_1)[-1], \\
  \{X[-2],Y[-2]\} &= ([X,Y])[-2] + (R^\nabla(X,Y))[-2], & \{e_1[-1],e_2[-1]\} &= g(e_1,e_2), \label{Eq:Poisson bracket 2}\\
 \{f,h\} &= 0, & \{f,e_1[-1]\} &= 0, \label{Poisson bracket on E[1]}
\end{align}
for all $X ,Y  \in \Gamma(T M)$, $e_1 , e_2 \in \Gamma(E )$, and   $f,h \in C^\infty(M)$. Here $R^\nabla$ is the curvature of $\nabla$. Since $\nabla$ is a metric connection, its curvature $R^\nabla$ indeed defines a degree $2$ element $R^\nabla(X,Y) \in \Gamma(\wedge^2 E)$ by
\[
 R^\nabla(X,Y)(e_1,e_2) := g\big(R^\nabla(X,Y)e_1, e_2\big).
\]

According to Equations~\eqref{Eq:Poisson bracket 2} and ~\eqref{Poisson bracket on E[1]}, the graded manifold $E[1]$ is a degree $2$ Poisson submanifold of $\E$ (see~\cite{Jotz1} for a discussion on Poisson graded manifolds of degree $2$). Moreover, the degree $2$ symplectic graded manifolds $(\E, \omega_{g,\nabla})$ arising from different metric connections are all isomorphic, and each of them provides a minimal symplectic realization of the Poisson manifold $E[1]$ (see~\cite{GMP} for details).

\subsection{The standard and naive complexes of Courant algebroids}
For definition of Courant algebroids, we follow  the convention of~\cites{CSX,GM2009,GMP}.
\begin{defn}\label{Def of Courant algebroid}
	A \textbf{pre-Courant algebroid} is a vector bundle $E \to M$ equipped with a fiberwise nondegenerate pseudo-metric $g$, a bundle map $\rho\colon E \to TM$ called \emph{anchor}, and an $\R$-bilinear operation $\circ$ on $\Gamma(E)$ called \emph{Dorfman bracket}. These structure maps are subject to the following axioms:
\begin{align}
	e_1 \circ (fe_2) &= (\rho(e_1)f) e_2 + f e_1 \circ e_2, \label{Eq:anchor}\\
	\rho(e_1)g(e_2, e_3) &=  g(e_1 \circ e_2, e_3) + g(e_2, e_1 \circ e_3), \notag \\
e_1 \circ e_1 &=  \partial\left(g(e_1,e_1)\right)   , \label{Eq:selfbracket}
\end{align}
for all $e_1,e_2,e_3 \in \Gamma(E)$ and $f \in C^\infty(M)$. Here $\partial\colon C^\infty(M)\to \Gamma(E)$ in~\eqref{Eq:selfbracket} is defined by
\begin{equation}\label{Eq: Def of partial}
\partial f:= \frac{1}{2}\rho^\ast df,\qquad \forall f\in C^\infty(M),
\end{equation}
where $d$ stands for the de Rham differential $C^\infty(M)\to \Omega^1(M)$ and $\rho^\ast \colon T^\ast M \to E^\ast \cong E$ is the dual of the anchor $\rho$. As before, the identification $E^\ast \cong E$ is induced by the pseudo-metric $g$.

Moreover, if the Dorfman bracket $\circ$ satisfies the Leibniz-Jacobi identity
\begin{equation}\label{Eq:Jacobi}
  	e_1 \circ (e_2 \circ e_3) = (e_1 \circ e_2) \circ e_3 + e_2 \circ (e_1 \circ e_3),
\end{equation}
then we call $(E,g,\rho,\circ)$ a \textbf{Courant algebroid}.
\end{defn}
Some people prefer to use the \emph{Courant bracket} which is the skew-symmetric part of the Dorfman bracket:
\[
 \llbracket e_1, e_2 \rrbracket := \frac{1}{2}(e_1 \circ e_2 - e_2 \circ e_1).
\]
 Thus, we have
\[
 e_1 \circ e_2 = \llbracket e_1, e_2 \rrbracket + \partial g(e_1,e_2).
\]

It is observed by Roytenberg in~\cite{Roytenberg2001} that a pre-Courant algebroid structure underlying a pseudo-Euclidean vector bundle $(E,g)$ is indeed encoded by a degree $3$ smooth function $\Theta$  on the minimal symplectic realization $ (\E, \omega_{g,\nabla})$ of the Poisson manifold $E[1]$.
More precisely, any smooth function $\Theta$ of degree $3$ on $\E$ induces a pre-Courant algebroid structure on the pseudo-Euclidean vector bundle $(E,g)$ through the following relations
\begin{align}\label{Eq:derived bracket 1}
\rho(e_1) f &= \{\{\Theta, e_1[-1]\},f\},\\
(e_1 \circ e_2)[-1] &= \{\{\Theta,e_1[-1]\},e_2[-1]\}. \label{Eq:derived bracket 2}
\end{align}
Moreover, if $\Theta$ is subject to the classical master equation $\{\Theta, \Theta\} = 0$, then the bracket $\circ$ satisfies the Leibniz-Jacobi identity~\eqref{Eq:Jacobi}, and thus $(E,g,\rho,\circ)$ is a Courant algebroid.

Conversely, to find the degree $3$ smooth function $\Theta$ from a pre-Courant algebroid $(E,g,\rho,\circ)$, we need the following steps. Firstly, the anchor map $\rho\colon E\to TM$ corresponds to an element $\rhothree$ in $\Gamma(E^\ast[-1] \otimes T[-2]M)$ after a degree shifting, and thus a degree $3$ function in $C^\infty(\E) $. Secondly, from $g$, $\circ$, and the metric connection $\nabla$ on $(E,g)$, we obtain another degree $3$ function $C_\nabla \in \Gamma(\wedge^3 E^\ast)$ in $C^\infty(\E)$, called the torsion\footnote{This general torsion  $C_\nabla$ is related to a particular torsion $	C_{\nabla^E}$  introduced in \cite{AX}. See Remark \ref{Rem:3charclass}.} of $\nabla$,
defined by
\begin{equation}\label{Eq:Def of torsion}
  C_\nabla(e_1,e_2,e_3) = \frac{1}{2} \operatorname{cycl}_{123}g\left(\frac{1}{3}(e_1 \circ e_2 - e_2 \circ e_1) - (\nabla_{\rho(e_1)}e_2 - \nabla_{\rho(e_2)}e_1), e_3 \right),
\end{equation}
 for all $e_1,e_2,e_3 \in \Gamma(E)$, where $\operatorname{cycl}_{123}$ denotes the sum over cyclic permutations.
 Finally, by setting the degree $3$ element
\[
 \Theta:= \rhothree +\Cnablathree\in C^\infty(\E),
\]
one can examine that $\Theta$ satisfies Equations~\eqref{Eq:derived bracket 1} and~\eqref{Eq:derived bracket 2}.
Moreover, if $\circ$ satisfies the Leibniz-Jacobi identity~\eqref{Eq:Jacobi}, then $\Theta$ solves the classical master equation $\{\Theta, \Theta\} = 0$.

In summary,  Courant algebroid structures on a pseudo-Euclidean vector bundle $(E,g)$ are in one-to-one correspondence with degree $3$ functions $\Theta\in C^\infty(\E)$, which we call \emph{generating Hamiltonian functions or potentials}, satisfying the classical master equation.
The triple $(\E, \{-,-\}, \Theta)$ is called a Hamiltonian $2$-algebroid in~\cite{Roytenberg2001} and a gauge system in~\cite{LS}.
The Hamiltonian vector field $X_{\Theta}:= \{\Theta, -\}$ is a homological vector field, i.e., a degree $1$ vector field on the graded manifold $\E$ such that $[X_{\Theta}, X_{\Theta}] = 0$. Thus, the pair $(\E, X_{\Theta})$ is a dg manifold or a Q-manifold~\cite{AKSZ}.
The associated commutative dg algebra $(C^\infty(\E), \{\Theta, -\})$ is called the \textbf{Rothstein dg algebra}; the underlying cochain complex
\[
 \big(\Cst^\bullet (E) := C^\infty(\E), \dst := \{\Theta, -\}\big),
\]
which was first introduced in \cite{Roytenberg2001}, is called the \textbf{standard complex} of the Courant algebroid $E$. The corresponding cohomology $H_{\mathrm{st}}^\bullet(E)$ is called the \textbf{standard cohomology} of $E$.

The standard complex $(\Cst^\bullet(E), \dst)$ admits a subcomplex $(\Gamma(\wedge^\bullet \ker \rho), \breve{d})$ which is called the \textbf{naive complex} of $E$ by Sti\'{e}non and Xu~\cite{SX2008}.
Note that the pseudo-metric $g$ on $E$ extends to a pseudo-metric, also denoted by $g$, on $\wedge^\bullet E$.
Hence, any element
\[
\eta \in \Gamma(\wedge^k \ker \rho) \subset \Gamma(\wedge^k E) \cong \Gamma(\wedge^k E^\ast) \subset C^\infty(\E )
\]
can be naturally viewed as a degree $k$ function on the graded manifold $\E $.
The differential $\breve{d}\colon \Gamma(\wedge^\bullet \ker \rho)\to \Gamma(\wedge^{ \bullet+1} \ker \rho)$ is defined by
\begin{align*}
 & g(\breve{d}\eta, e_0\wedge \cdots \wedge e_{k}) \\
&=\sum_{i=0}^{k}(-1)^{i}\rho(e_i)g(\eta, e_0\wedge\cdots  \wedge \widehat{e_i} \wedge \cdots \wedge e_{k})\\
&\quad +\sum_{i<j}(-1)^{i+j}g(\eta, \llbracket e_i, e_j \rrbracket \wedge e_0 \wedge\cdots \wedge \widehat{e_i} \wedge\cdots \wedge \widehat{e_j}\wedge\cdots \wedge e_{k}),
\end{align*}
for all $\eta \in \Gamma(\wedge^k \ker \rho)$ and   $e_0,\cdots, e_k \in \Gamma(E)$.
The cohomology of this cochain complex is called the \textbf{naive cohomology} of $E$ and is denoted by $H^\bullet_{\operatorname{naive}}(E)$. It is proved by Sti\'{e}non and Xu in \emph{op.cit.} that this complex $(\wedge^\bullet \ker \rho, \breve{d})$ is indeed a subcomplex of the standard one $(\Cst^\bullet(E), \dst)$.

\section{A contraction theorem}\label{Sec:1regularcomplex}
Let $(E,g,\rho,\circ)$ be a regular Courant algebroid, i.e., the subbundle $F := \rho(E) \subseteq TM$ is of locally constant rank, and thus gives rise to an integrable distribution in $M$ which we call the \textbf{characteristic distribution} of $E$.
In this section, we introduce a dg subalgebra of the Rothstein dg algebra of $E$, which we call the regular dg algebra.
The main result of this section is Theorem~\ref{contractionCA} on the existence of a homotopy contraction from the underlying complex of the regular dg algebra to the naive complex of $E$.
\subsection{The regular dg algebra and the ample Lie algebroid}\label{Sec:regularcomplex}
\subsubsection{The regular dg algebra}
Since the characteristic distribution $F$ is regular, one has a short exact sequence of vector bundles over $M$
\begin{equation}\label{Eq:SES of VB}
  0 \to F \xrightarrow{\;i\;} TM \xrightarrow{\;\pr_B\;} B \to 0,
  \end{equation}
where we denote by $B$ the normal bundle $ TM/F$.
Choose a metric connection $\nabla$ on the pseudo-Euclidean vector bundle $(E,g)$. Then by Proposition~\ref{Prop:Rothstein algebra}, we obtain an exact symplectic graded manifold $(\E = T^\ast[2]M \oplus E[1] , \omega_{g,\nabla})$ of degree $2$.
Consider the graded manifold defined by
 \[
 \RR:= F^\ast[2] \oplus E[1].
 \]
It is clear that the inclusion $i \colon F \hookrightarrow TM$ induces a morphism of graded manifolds
\[
   \Phi  = (i^\ast \oplus \id_{E[1]}) \colon~ \E = T^\ast[2]M \oplus E[1] \to  \RR =F^\ast[2] \oplus E[1].
\]

Although the graded manifold $\RR $ is not symplectic in general, it admits a  degree $(-2)$ Poisson structure. In fact, according to Equations~\eqref{GlobalPoisson}, \eqref{Eq:Poisson   bracket 2}, and~\eqref{Poisson bracket on E[1]}, the degree $(-2)$ Poisson bracket on $C^\infty(\E)$ induces a Poisson bracket on $C^\infty(\RR )$ via the morphism of graded commutative algebras $\Phi^\ast \colon C^\infty(\RR ) \to C^\infty(\E)$ such that $(C^\infty(\RR ), \{-,-\})$ is a Poisson subalgebra of $(C^\infty(\E), \{-,-\})$.
Note further that the generating Hamiltonian function $\Theta = \rhothree +\Cnablathree$ of the regular Courant algebroid $E$ indeed belongs to the subalgebra $C^\infty(\RR )$ of $C^\infty(\E)$. Thus, the Hamiltonian vector field $X_{\Theta}=\{\Theta, -\}=d_E$ restricts to a homological vector field on the graded manifold $\RR$. Hence, $\Phi^\ast \colon (C^\infty(\RR), d_E) \to (C^\infty(\E), d_E)$ is a morphism of commutative dg algebras, and $\Phi\colon \E\to \RR$ is a morphism of dg manifolds.

\begin{defn}
	We call the commutative dg algebra $(C^\infty(\RR), d_E)$ the  \textbf{regular dg algebra}  of the regular Courant algebroid $E$, and the corresponding cohomology group $H^\bullet_{\operatorname{reg}} (E)$ the \textbf{regular cohomology} of $E$.
\end{defn}

\subsubsection{The ample Lie algebroid}
Let $(E,g,\rho,\circ)$ be a Courant algebroid. The subbundle $(\ker \rho)^\perp$ of $E$ orthogonal to $\ker\rho$ with respect to $g$ coincides with $\rho^\ast(T^\ast M) \subset E^\ast \cong E$, i.e., the subbundle of $E$ generated by the image of $\partial$ defined  by Equation~\eqref{Eq: Def of partial}.
According to Uchino~\cite{Uchino},  the Leibniz rule~\eqref{Eq:anchor} and the Leibniz-Jacobi identity~\eqref{Eq:Jacobi} imply the following identity
\[
\rho(e_1 \circ e_2) = [\rho(e_1), \rho(e_2)]_{TM},\quad \forall e_1, e_2 \in \Gamma(E).
\]
Thus, by Equation~\eqref{Eq:selfbracket}, we have
\[
\rho(\partial g(e_1,e_1)) = \rho(e_1 \circ e_1) = [\rho(e_1),\rho(e_1)]_{TM} = 0.
\]
Therefore, the kernel of the anchor is coisotropic, i.e.,
\[
 (\ker\rho)^\perp \subseteq \ker\rho.
\]
It is also easy to see that both $\Gamma(\ker\rho)$ and $\Gamma((\ker\rho)^\perp)$ are two-sided ideals in  $\Gamma(E)$ with respect to the Dorfman bracket $\circ$.

As we have assumed  that $E$ is  {regular}, both $\ker\rho$ and $(\ker\rho)^\perp$ are
smooth (constant rank) subbundles of $E$. Moreover, the quotient $E/(\ker\rho)^\perp$ is a Lie algebroid, called the \textbf{ample Lie algebroid} associated to $E$ \cite{CSX}, and will be denoted by $A_E$.
Note that $A_E^\ast=(E/(\ker\rho)^\perp)^\ast\cong \ker \rho$. Thus, the space $C^\bullet(A_E):=\Gamma(\wedge^\bullet A_E^\ast)$ is identified with $\Gamma (\wedge^\bullet \ker \rho)$. Moreover, via this identification,  the Chevalley-Eilenberg differential $\dCE$ of the ample Lie algebroid $A_E$ coincides with the naive differential $\breve{d}$.
Hence, the naive complex $(C_{\operatorname{naive}}^\bullet (E), \breve{d})$ of a regular Courant  algebroid $E$ is isomorphic to the Chevalley-Eilenberg complex $(C^\bullet(A_E),\dCE)$ of its ample Lie algebroid $A_E$.  For more details, see~\cite{CSX}.

\subsection{The regular-to-naive contraction}
In this section, we establish a contraction from the regular complex $(C^\bullet_{\operatorname{reg}}(E) = C^\infty(\RR), d_E)$ of a regular Courant algebroid $E$ onto the Chevalley-Eilenberg complex $(C^\bullet(A_E),\dCE)$ of the ample Lie algebroid $A_E$, or equivalently onto the naive complex $(C_{\operatorname{naive}}^\bullet (E), \breve{d})$ of $E$. For this, we  need to choose a \textit{dissection}  of $E$, which was introduced by the second author with Sti\'{e}non  and Xu in~\cite{CSX}.

\subsubsection{Dissections of regular Courant algebroids}
Denote by $\G$ the quotient bundle $\ker\rho/(\ker\rho)^\perp$. We use the symbol $\pi$ to denote the projection map $\ker\rho \rightarrow \G$. The Dorfman bracket $\circ$ of $E$ induces a $C^\infty(M)$-bilinear operation on $\Gamma(\G)$:
\[
[\pi(\mathbf{r}),\pi(\mathbf{s})]^\G := \pi(\mathbf{r} \circ \mathbf{s}),\quad \forall \mathbf{r}, \mathbf{s} \in \ker\rho.
\]
It turns out that $[-,-]^\G$  is a Lie bracket and hence $(\G, [-,-]^\G)$ is a bundle of Lie algebras. Moreover, the map
\[
\pi(\mathbf{r}) \otimes \pi(\mathbf{s}) \mapsto g(\mathbf{r}, \mathbf{s})
\]
is a well-defined nondegenerate symmetric and ad-invariant pseudo-metric on $\G$, which we will denote by the symbol ${g^{\G}}$. Hence, the triple $(\G, [-,-]^\G, {g^{\G}})$ is a bundle of quadratic Lie algebras.
The classical Cartan $3$-form ${C^\G}$ defined by
\[
 {C^\G}(r,s,t) := {g^{\G}}([r,s]^\G, t)
\]
for all $r,s,t \in \Gamma(\G)$, is treated as an element in $\Gamma(\wedge^3 \G)$.

Consider the vector bundle $F^\ast \oplus \G \oplus F$ equipped with the pseudo-metric $\langle -,- \rangle$ defined by
\[
\langle \xi+r+x,\eta+s+y\rangle  = \frac{1}{2}\langle \xi \mid y \rangle+ \frac{1}{2} \langle \eta \mid x\rangle + {g^{\G}}(r,s),
\]
for all $\xi, \eta \in \Gamma(F^\ast), r, s \in \Gamma(\G)$ and $x,y\in \Gamma(F)$. Here $\langle -\mid - \rangle$ is the standard pairing between $F^\ast$ and $F$.
\begin{defn}\label{Def:dissection}
A \textbf{dissection} of the regular Courant algebroid $(E,g,\rho, \circ)$  is an isomorphism of pseudo-Euclidean vector bundles
\[
\Psi \colon (F^\ast \oplus \G \oplus F, \langle -,- \rangle ) \rightarrow (E,g),
\]
i.e.,
\[
g(\Psi(\xi+r+x), \Psi(\eta+s+y)) = \langle \xi+r+x, \eta+s+y \rangle
\]
for all $\xi, \eta \in \Gamma(F^\ast), r, s\in\Gamma(\G)$ and $x,y\in \Gamma(F)$.
\end{defn}Lemma 1.2 of \cite{CSX} shows that dissections always exist.
Using a dissection $\Psi$ of $E$, we identify $E$ with $F^\ast \oplus \G \oplus F$, and obtain the following key maps:
\begin{align}\label{Eq:F-conn on G}
  \nabla^\G \colon &\Gamma(F) \otimes \Gamma(\G) \to \Gamma(\G), & \nabla^\G_x  r_1 &= \pr_\G(x \circ r_1); \\
\label{Eq:FFtoG}
  R \colon &\Gamma(F) \otimes \Gamma(F) \to \Gamma(\G),& R(x,y) &= \pr_\G(x \circ y);  \\
\label{Eq:FFtoFast}
  H \colon &\Gamma(F) \otimes \Gamma(F) \otimes \Gamma(F) \to C^\infty(M), & H(x,y,z) &= \langle \pr_{F^\ast}(x \circ y) \mid z \rangle;
\end{align}
for all $x,y,z \in \Gamma(F), r_1,r_2 \in \Gamma(\G)$, where $\pr_\G$ and $\pr_{F^\ast}$ are projections onto $\G$ and $F^\ast$, respectively. The maps $\nabla^\G$ and $R$ induce another two maps as follows:
\begin{align*}
  P \colon &\Gamma(\G) \otimes \Gamma(\G) \to \Gamma(F^\ast),  & \langle P(r_1,r_2) \mid y \rangle &= 2 {g^{\G}}(r_2, \nabla_y^\G r_1),\\
 Q \colon &\Gamma(F) \otimes \Gamma(\G) \to \Gamma(F^\ast), & \langle Q(x,r_1) \mid y \rangle &= {g^{\G}}(r_1, R(x,y)).
\end{align*}
Moreover, we have
\begin{prop}[\cite{CSX}*{Lemma 2.1}]\label{Prop:CSX 2.1}
Each dissection $\Psi$ transfers the  Courant algebroid structure of $E$ to a \textit{standard Courant  algebroid} structure on the pseudo-Euclidean vector bundle $(F^\ast \oplus \G \oplus F, \langle -, -\rangle)$, whose anchor map is the projection $\pr_F$ onto $F$, and whose Dorfman bracket $\circ$ is defined as follows:
\begin{align*}
x_1 \circ x_2 &= [x_1,x_2] + R(x_1,x_2) + H(x_1,x_2,-), & r_1 \circ r_2 &= [r_1,r_2]^\G + P(r_1,r_2), \\
\xi_1 \circ \xi_2 &= \xi_1 \circ r_2 = r_1 \circ \xi_2 = 0, & x_1 \circ \xi_2 &= L_{x_1} \xi_2, \\
\xi_1 \circ x_2 &= -L_{x_2}\xi_1 + d_F \langle \xi_1 \mid x_2 \rangle, & x_1 \circ r_2 &= -r_2 \circ x_1 = \nabla_{x_1}^\G r_2 - 2Q(x_1,r_2),
\end{align*}
for all $x_1,x_2 \in \Gamma(F), r_1,r_2 \in \Gamma(\G), \xi_1,\xi_2 \in \Gamma(F^\ast)$. Here $d_F\colon C^\infty(M)\to \Gamma(F^*)$ denotes the leafwise de Rham differential. We denote this standard Courant algebroid by $S(\nabla^\G, R, H)$.

Furthermore, the underlying vector bundle of the ample Lie algebroid $A_E$ is naturally identified with $F \oplus \G$, the anchor map is the projection $\pr_F$, and the bracket $[-,-]_{A_E}$ on $\Gamma(A_E) \cong \Gamma(F \oplus \G)$ is given by
\begin{align*}
  [x_1,x_2]_{A_E} &= [x_1, x_2] + R(x_1, x_2), & [x_1, r_1]_{A_E} &= \nabla^\G_{x_1} r_1, &\mbox{and }~ [r_1,r_2]_{A_E} &= [r_1,r_2]^\G.
\end{align*}
We denote this ample Lie algebroid by $L(\nabla^\G, R)$.
\end{prop}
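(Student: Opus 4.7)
I would approach this proof by systematically transporting the Courant algebroid structure of $E$ along the dissection $\Psi$ and reading off the resulting structure on $F^\ast \oplus \G \oplus F$ component by component.

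First, I would identify the image of each summand of $F^\ast \oplus \G \oplus F$ under $\Psi$. Since $\Psi$ is an isometry and the summand $F^\ast$ in $F^\ast \oplus \G \oplus F$ is isotropic with orthogonal complement $F^\ast \oplus \G$, we must have $\Psi(F^\ast) = (\ker\rho)^\perp = \rho^\ast(T^\ast M)$ and $\Psi(F^\ast \oplus \G) = \ker\rho$. Hence the anchor $\rho$ vanishes on the $F^\ast$- and $\G$-components of the dissected model, and $\rho$ identifies with $\pr_F$ on the remaining summand.

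Next, I would compute the Dorfman brackets case by case, relying on three tools: (i) the compatibility $\rho(e_1 \circ e_2) = [\rho e_1, \rho e_2]$, which constrains the $F$-component; (ii) the symmetrization identity $e_1 \circ e_2 + e_2 \circ e_1 = 2\partial g(e_1, e_2)$ coming from~\eqref{Eq:selfbracket}; and (iii) the general formula
\[
e \circ \rho^\ast\alpha = \rho^\ast(L_{\rho(e)}\alpha),
\]
which follows directly from the $g$-invariance of the Dorfman bracket together with the anchor compatibility. Tool (iii) is the essential device: since $\rho\xi_i = 0 = \rho r_i$, it yields at once $\xi_1 \circ \xi_2 = 0$ and $r_1 \circ \xi_2 = 0$, and via (ii) also $\xi_1 \circ r_2 = 0$. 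It also produces $x_1 \circ \xi_2 = L_{x_1}\xi_2$, and via symmetrization gives $\xi_1 \circ x_2 = -L_{x_2}\xi_1 + d_F\langle \xi_1 \mid x_2\rangle$ after applying Cartan's formula on $F$. For the remaining brackets, the $F$-component of $x_1 \circ x_2$ must equal $[x_1,x_2]$ by (i); the $\G$- and $F^\ast$-components then \emph{define} $R(x_1,x_2)$ and $H(x_1,x_2,-)$. The bracket $r_1 \circ r_2$ is forced to lie in $\ker\rho$; its $\G$-component is $[r_1,r_2]^\G$ by the very definition of the induced bracket, while its $F^\ast$-component is $P(r_1,r_2)$. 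Similarly, $x_1 \circ r_2$ lies in $\ker\rho$ with $\G$-component $\nabla^\G_{x_1} r_2$ as in~\eqref{Eq:F-conn on G} and $F^\ast$-component $-2Q(x_1,r_2)$. The defining pairings for $P$ and $Q$ stated after~\eqref{Eq:FFtoFast} are then recovered by testing these brackets against a section $y \in \Gamma(F)$ via the pseudo-metric.

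Finally, the ample Lie algebroid structure follows automatically: since $(\ker\rho)^\perp$ corresponds to the summand $F^\ast$ under $\Psi$, the quotient $A_E = E/(\ker\rho)^\perp$ is identified with $\G \oplus F$; its anchor is inherited as $\pr_F$, and its bracket is obtained by discarding the $F^\ast$-components from $x_1 \circ x_2$, $x_1 \circ r_2$, and $r_1 \circ r_2$ computed above, which produces precisely the formulas for $[-,-]_{A_E}$ asserted in the proposition. I expect the main obstacle to be not conceptual but rather bookkeeping: tracking signs, the factor of $\frac{1}{2}$ built into the pseudo-metric $\langle-,-\rangle$, and the normalization factors in the defining relations of $P$ and $Q$, all of which must be reconciled with the Courant algebroid axioms in their asymmetric Dorfman form.
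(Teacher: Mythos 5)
The paper does not actually prove this proposition: it is imported verbatim from \cite{CSX}*{Lemma 2.1}, so there is no in-text argument to compare against. Your reconstruction follows the standard route (transport the structure along $\Psi$, then read off each bracket using the anchor compatibility $\rho(e_1\circ e_2)=[\rho e_1,\rho e_2]$, the polarization $e_1\circ e_2+e_2\circ e_1=2\partial g(e_1,e_2)$ of~\eqref{Eq:selfbracket}, and the identity $e\circ\rho^\ast\alpha=\rho^\ast(L_{\rho(e)}\alpha)$), and I checked the individual computations: the vanishing brackets, the formulas for $x_1\circ\xi_2$ and $\xi_1\circ x_2$ (including the factor coming from $\partial f=\tfrac12\rho^\ast df$ versus $d_F$), and the recovery of $P$ and $Q$ by pairing $\pr_{F^\ast}(r_1\circ r_2)$ and $\pr_{F^\ast}(x_1\circ r_2)$ against $y\in\Gamma(F)$ all come out with the stated normalizations. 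The passage to $A_E$ by discarding $F^\ast$-components is likewise correct, given that $\Gamma((\ker\rho)^\perp)$ is a two-sided ideal (already established in Section~\ref{Sec:1regularcomplex}).

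The one genuine gap is your opening step. You claim that because the summand $F^\ast$ is isotropic with orthogonal complement $F^\ast\oplus\G$, any isometry $\Psi$ ``must'' send $F^\ast$ to $(\ker\rho)^\perp$ and $F^\ast\oplus\G$ to $\ker\rho$. This does not follow: an isometry preserves isotropy but there are many isotropic subbundles of the same rank (already for $\G=0$ and $F\cong F^\ast$, an isometry can interchange the two isotropic summands, and then the transported anchor is not $\pr_F$). With the paper's bare Definition~\ref{Def:dissection} (isometry only), the proposition as stated would in fact be false. What rescues the argument is that the full definition of a dissection in \cite{CSX} additionally requires compatibility with the anchor (equivalently with the flag $(\ker\rho)^\perp\subset\ker\rho\subset E$) and with the quotient projection onto $\G$; the latter is needed separately, since even $\Psi(F^\ast\oplus\G)=\ker\rho$ only gives that $\pr_\G\circ\Psi^{-1}|_{\ker\rho}$ differs from $\pi$ by an isometry of $(\G,g^\G)$, and without pinning it down to $\pi$ the bracket $[-,-]^\G$ and the maps $\nabla^\G, P, Q$ in your formulas would be conjugated versions of the intrinsic ones. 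So you should replace that inference by an explicit appeal to these defining conditions (or prove that your chosen $\Psi$ satisfies them); once that is done, the rest of your argument goes through as written.
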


\begin{prop}[\cites{CSX, MR3661534}]\label{Prop: different dissection}
  Suppose that $\hat{\Psi}$ is another dissection of $E$ and that $(\hat{\nabla}^\G, \hat{R}, \hat{H})$ are the associated linear maps. Then the change of dissection $\delta = \hat{\Psi}^{-1} \circ \Psi \colon S(\nabla^\G, R,H) \to S(\hat{\nabla}^\G, \hat{R}, \hat{H})$ is a Courant algebroid isomorphism of the form
  \[
   \delta(\xi + r + x) = \big(\xi - \varphi^\dagger(r) + \beta(x) - \frac{1}{2}\varphi^\dagger(\varphi(x))\big) + \big(\tau(r) + \tau(\varphi(x))\big) + x,
  \]
  for any $\xi \in \Gamma(F^\ast), r \in \Gamma(\G), x \in \Gamma(F)$. Here $\tau$ is an  automorphism  of the bundle $\G$ of quadratic Lie algebras,    $\varphi \colon F \to \G$  and $\beta \colon F \to F^\ast$ are some bundle maps, and $\varphi^\dagger \colon \G \to F^\ast$ is the dual of $\varphi$ defined by $g^\G(\varphi(x), r) = \langle x, \varphi^\dagger(r) \rangle$.

  Furthermore, the restriction of $\delta$ onto ample Lie algebroids defines an isomorphism of Lie algebroids:
  \[
  \delta_L \colon L(\nabla^\G, R) \to L(\hat{\nabla}^\G, \hat{R}), \quad \delta_L(r+x) = (\tau(r) + \tau(\varphi(x))) + x.
  \]
\end{prop}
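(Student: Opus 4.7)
The plan is to exploit three structural features of $\delta = \hat{\Psi}^{-1} \circ \Psi$: (i) by construction it is a Courant algebroid isomorphism from $S(\nabla^\G, R, H)$ to $S(\hat{\nabla}^\G, \hat{R}, \hat{H})$, hence intertwines anchors, pseudo-metrics, and Dorfman brackets; (ii) the canonical filtration $(\ker \rho)^\perp \subset \ker \rho \subset E$, which each dissection identifies with $F^\ast \subset F^\ast \oplus \G \subset F^\ast \oplus \G \oplus F$, is preserved by $\delta$; and (iii) both anchors equal $\pr_F$, so $\pr_F \circ \delta = \pr_F$, forcing the $F$-component of $\delta(\xi + r + x)$ to be $x$. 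Combined with the pairing relation $\langle \xi, y \rangle = \tfrac{1}{2} \xi(y)$ for $\xi \in \Gamma(F^\ast)$, $y \in \Gamma(F)$, pseudo-metric preservation then forces $\delta|_{F^\ast} = \id_{F^\ast}$.

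Consequently $\delta$ has the block form
\[
 \delta(\xi + r + x) = \big(\xi + \mu(r) + \alpha(x)\big) + \big(\tau(r) + \nu(x)\big) + x,
\]
for bundle maps $\tau\colon \G \to \G$, $\nu\colon F \to \G$, $\mu\colon \G \to F^\ast$, $\alpha\colon F \to F^\ast$. I would then extract these blocks by imposing pseudo-metric preservation on each pair of input types: comparing $\langle \delta(r), \delta(s) \rangle$ with $g^\G(r,s)$ yields $g^\G(\tau(r), \tau(s)) = g^\G(r,s)$, so $\tau$ is fiberwise orthogonal on $(\G, g^\G)$; setting $\varphi := \tau^{-1} \circ \nu$, the vanishing of $\langle \delta(r), \delta(x) \rangle$ fixes $\mu(r) = -\varphi^\dagger(r)$; finally $\langle \delta(x), \delta(y) \rangle = 0$ lets one write $\alpha(x) = \beta(x) - \tfrac{1}{2}\varphi^\dagger(\varphi(x))$ for a (necessarily skew-symmetric) bundle map $\beta \colon F \to F^\ast$. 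This reproduces the announced formula.

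To promote $\tau$ from an orthogonal bundle automorphism to an automorphism of the bundle of quadratic Lie algebras, I would invoke the remaining hypothesis that $\delta$ intertwines Dorfman brackets. Restricting the identity $\delta(r_1 \circ r_2) = \delta(r_1) \circ \delta(r_2)$ to pure inputs $r_1, r_2 \in \Gamma(\G)$ and reading off the $\G$-component yields $\tau([r_1, r_2]^\G) = [\tau(r_1), \tau(r_2)]^\G$. For the last part of the statement, since $\delta$ preserves $F^\ast = (\ker \rho)^\perp$ it descends to the quotient $A_E = E/(\ker \rho)^\perp \cong \G \oplus F$, and the explicit formula for $\delta_L$ is read off directly from $\delta$ by dropping the $F^\ast$-component; the descended map is a Lie algebroid isomorphism because the bracket on $A_E$ is the Dorfman bracket modulo $F^\ast$.

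The main technical hurdle will be the bookkeeping in the second paragraph: keeping careful track of the factor $\tfrac{1}{2}$ in $\langle -, - \rangle$ and of the convention defining $\varphi^\dagger$ via $g^\G(\varphi(x), r) = \langle x, \varphi^\dagger(r) \rangle$ is what pins down the coefficient $-\tfrac{1}{2}$ in front of $\varphi^\dagger(\varphi(x))$ and cleanly separates the free parameter $\beta$ from the symmetric part of $\alpha$ forced by the metric equations.
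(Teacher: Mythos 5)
The paper itself offers no proof of this proposition; it is imported verbatim from \cite{CSX} and \cite{MR3661534}, so there is nothing in-text to compare against. Your argument is correct and is essentially the derivation used in those references: the anchor condition $\pr_F\circ\delta=\pr_F$ fixes the $F$-component, preservation of the coisotropic filtration $(\ker\rho)^\perp\subset\ker\rho$ (identified with $F^\ast\subset F^\ast\oplus\G$) together with isometry forces $\delta|_{F^\ast}=\id$, the triangular block form, and the coefficients $-\varphi^\dagger(r)$ and $-\frac{1}{2}\varphi^\dagger(\varphi(x))$, while compatibility with the Dorfman bracket on $\G$-inputs upgrades the orthogonal map $\tau$ to an automorphism of the bundle of quadratic Lie algebras; the descent to $A_E=E/(\ker\rho)^\perp$ is then immediate since $\Gamma((\ker\rho)^\perp)$ is a Dorfman ideal. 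In a full write-up you should only add two small explicit remarks: that $\tau$ is invertible (from orthogonality and nondegeneracy of $g^\G$, or because $\delta$ induces an isomorphism on the graded quotient) before setting $\varphi=\tau^{-1}\circ\nu$, and that the convention $g^\G(\varphi(x),r)=\langle x,\varphi^\dagger(r)\rangle$ is taken with respect to the pseudo-metric, whose $F^\ast$--$F$ pairing carries the factor $\frac{1}{2}$ that produces the coefficient $-\frac{1}{2}$ in front of $\varphi^\dagger(\varphi(x))$.
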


\subsubsection{Dissection-compatible metric connections}
Given a dissection $\Psi$,  the regular Courant algebroid $E$ is identified with the standard one $S(\nabla^\G, R, H)$ as described above.
To explicitly write down the generating Hamiltonian function for the regular Courant algebroid $E$ compatible with the given dissection $\Psi$, one needs a special metric connection on the pseudo-Euclidean vector bundle $(F^\ast \oplus \G \oplus F, \langle -, - \rangle)$.  We will use the following \textit{triple} $(j,\nabla^F,\nabla^B)$ which is composed of
\begin{compactenum}
  \item a splitting $j \colon B \to TM$ of the short exact sequence \eqref{Eq:SES of VB} of vector bundles over $M$;
\item an $F$-connection $\nabla^F$ on $F$ which is torsion-free, i.e., $\nabla^F_{x_1}{x_2}-\nabla^F_{x_2}{x_1}=[x_1,x_2]$ for all  $x_1,x_2 \in \Gamma(F)$;
\item a metric $B$-connection on $\G$, i.e., a bilinear map $\nabla^B \colon \Gamma(B) \times \Gamma(\G) \to \Gamma(\G)$ satisfying
\begin{align*}
  \nabla^B_{fb} r &= f\nabla^B_b r, & \nabla^B_b (fr) &= (j(b)f) r + f \nabla^B_b r,  &\mbox{and~}~ j(b) {g^{\G}}(r,s) &= {g^{\G}}(\nabla_b^B r, s) + {g^{\G}}(r, \nabla_b^B s),
\end{align*}
for all $f \in C^\infty(M), b \in \Gamma(B)$ and $r, s \in \Gamma(\G)$.
\end{compactenum}
The existence of $(j,\nabla^F,\nabla^B)$  is evident. By introducing such a triple, one is able to construct an appropriate connection on $  F^\ast \oplus F \oplus \G$:
\begin{itemize}
	\item Given a torsion-free connection $\nabla^F$ on $F$,  the dual connection on $F^\ast$ is also denoted by $\nabla^F$. Then $\nabla^F$ defines an $F$-connection on $F^\ast \oplus F$ that is compatible with the standard pairing $\langle - \mid -\rangle$.
	\item The operator $\nabla^\G$ defined in Equation~\eqref{Eq:F-conn on G} satisfies
	\[
	\nabla^\G_{fx}r = f\nabla_x^\G r, \qquad \nabla^\G_x (fr) = f\nabla^G_x r + (x(f))r,
	\]
	and
	\begin{align*}
		x{g^{\G}}(r,s) &= \rho(\Psi(x)) g(\Psi(r),\Psi(s)) = g(\Psi(x) \circ \Psi(r), \Psi(s)) + g(\Psi(r), \Psi(x) \circ \Psi(s)) \\
		&= {g^{\G}}(\nabla_x^\G r, s) + {g^{\G}}(r, \nabla_x^\G s),
	\end{align*}
	for all $x \in \Gamma(F)$, $r, s \in \Gamma(\G)$, and $f \in C^\infty(M)$.
	\item Extend the bilinear map $\nabla^B \colon \Gamma(B) \times \Gamma(\G) \to \Gamma(\G)$ to a new bilinear map
	\begin{equation}\label{Eq: extended B-connection}
	\nabla^B \colon \Gamma(B) \otimes \Gamma(F^\ast \oplus \G \oplus F) \to \Gamma(F^\ast \oplus \G \oplus F)
	\end{equation}
  by setting additionally that
	\begin{equation}\label{Eq:Def of nablaB}
		\nabla^B \colon \Gamma(B) \otimes \Gamma(F) \to \Gamma(F), \quad \nabla^B_b x : = \pr_F [j(b), x],
	\end{equation}
	and
	\[
	\nabla^B \colon \Gamma(B) \otimes \Gamma(F^\ast) \to \Gamma(F^\ast), \quad
	\langle  \nabla^B_b \xi \mid x \rangle := j(b) \langle \xi \mid x \rangle - \langle \xi \mid \nabla^B_b x \rangle,
	\] for all $b \in \Gamma(B), x \in \Gamma(F), \xi \in \Gamma(F^\ast)$.
	
	It is clear that $\nabla^B$ satisfies the usual requirements of connections:
	\[
	\nabla^B_{fb}x = f\nabla^B_b x, \quad \nabla^B_b (fx) = j(b)(f) x + f \nabla^B_b x,
	\]
	for all $f \in C^\infty(M)$.	
\end{itemize}

	In summary, we have
\begin{lem}\label{Lem:F-connection on E}
Any triple $(j,\nabla^F, \nabla^B)$ induces a metric connection $\nabla$ on the pseudo-Euclidean vector bundle $(F^\ast \oplus F \oplus \G, \langle-,-\rangle)$ defined by
$\nabla:= \nabla^F + \nabla^\G + \nabla^B$, i.e.
\[
\nabla_{x_0 + j(b_0)}(\xi + r + x) = \nabla_{x_0}^F \xi + \nabla^B_{b_0} \xi + \nabla^\G_{x_0}r + \nabla^B_{b_0}r + \nabla^F_{x_0}x + \nabla^B_{b_0} x,
\]
for all $x_0, x \in \Gamma(F), b_0 \in \Gamma(B), \xi \in \Gamma(F^\ast)$ and $r \in \Gamma(\G)$.
\end{lem}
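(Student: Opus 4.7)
The statement is essentially a bookkeeping verification that the three operators $\nabla^F$, $\nabla^{\mathcal{G}}$, and the extended $\nabla^B$ assemble into a single genuine connection on $F^\ast \oplus \mathcal{G} \oplus F$ which preserves the pseudo-metric $\langle-,-\rangle$. The plan is to check the two defining properties of a metric connection separately.

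First I would verify that $\nabla$ is a well-defined connection. For a tangent vector $X_0 = x_0 + j(b_0)$ with $x_0 \in \Gamma(F)$, $b_0 \in \Gamma(B)$, and a section $e = \xi + r + x$, the formula gives six terms, each landing in its intended summand of $F^\ast \oplus \mathcal{G} \oplus F$. The $C^\infty(M)$-linearity in $X_0$ follows because each of $\nabla^F$ and $\nabla^B$ is $C^\infty(M)$-linear in the first argument on its respective summand (for $\nabla^B$, this is built into the assumption $\nabla^B_{fb} = f\nabla^B_b$ and into the definition~\eqref{Eq:Def of nablaB}, which is manifestly $C^\infty(M)$-linear in $b$). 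The Leibniz rule $\nabla_{X_0}(fe) = X_0(f)\,e + f\,\nabla_{X_0} e$ then reduces to the six corresponding Leibniz identities already recorded in the bullet points above the lemma, together with the elementary fact that $(x_0 + j(b_0))(f) = x_0(f) + j(b_0)(f)$.

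Second, I would check the metric-compatibility condition $X_0 \langle e_1, e_2 \rangle = \langle \nabla_{X_0} e_1, e_2 \rangle + \langle e_1, \nabla_{X_0} e_2 \rangle$ for $e_1 = \xi + r + x$ and $e_2 = \eta + s + y$. Since $\langle-,-\rangle$ splits as $\tfrac12\langle\xi|y\rangle + \tfrac12\langle\eta|x\rangle + g^{\mathcal{G}}(r,s)$, it suffices to handle each summand and each of the two cases $X_0 \in \Gamma(F)$, $X_0 = j(b_0)$ independently. For $X_0 = x_0 \in \Gamma(F)$: the $\langle\xi|y\rangle$- and $\langle\eta|x\rangle$-parts are compatible because $\nabla^F$ on $F^\ast$ is by definition dual to $\nabla^F$ on $F$; the $g^{\mathcal{G}}$-part is the computation
\[
 x_0 \, g^{\mathcal{G}}(r,s) = g^{\mathcal{G}}(\nabla^{\mathcal{G}}_{x_0} r, s) + g^{\mathcal{G}}(r, \nabla^{\mathcal{G}}_{x_0} s)
\]
displayed in the paragraph just before the lemma statement. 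For $X_0 = j(b_0)$: the pairing part is compatible by the very definition of $\nabla^B$ on $F^\ast$, namely $\langle \nabla^B_b \xi\mid x\rangle := j(b)\langle\xi|x\rangle - \langle\xi|\nabla^B_b x\rangle$, while the $g^{\mathcal{G}}$-part is exactly the assumption that $\nabla^B$ is a metric $B$-connection on $\mathcal{G}$.

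There is no genuine obstacle here: the statement is an organizational lemma, and all four metric-compatibility identities needed at the component level have been arranged in advance. The only mild subtlety to flag is the bookkeeping with the extended $\nabla^B$ on $F$ and $F^\ast$, since these are not assumed as inputs but defined from $j$ via \eqref{Eq:Def of nablaB} and its dual; once one notes that the dual is tautologically metric with respect to the pairing $\langle-\mid-\rangle$, the verification closes.
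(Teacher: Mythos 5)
Your proof is correct and follows exactly the route the paper intends: the paper offers no explicit proof of this lemma (it is stated as the summary of the preceding bullet points), and your verification simply assembles the Leibniz and metric-compatibility identities already recorded there, split according to the decomposition $TM = F \oplus j(B)$ and the three summands of the metric. Nothing is missing.
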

The metric connection $\nabla$ on $(F^\ast \oplus \G \oplus F, \langle -, - \rangle)$ we constructed from the triple $(j,\nabla^F, \nabla^B)$ transfers to a metric connection on $(E,g)$ via the chosen dissection $\Psi$. It will be also denoted by $\nabla$ and referred to as the $\Psi$-\textit{compatible metric connection}  on $E$.

\subsubsection{The regular-to-naive contraction}
In the sequel, let us fix a dissection $\Psi$ of the regular Courant algebroid $E$ such that $E$ is  identified with the standard Courant algebroid $S(\nabla^\G,R,H)$. 
We also fix a triple $(j, \nabla^F, \nabla^B)$ as earlier, and denote by $\nabla$ the $\Psi$-compatible metric connection.
Then the underlying graded manifold of the minimal symplectic realization $\E$ of the Poisson manifold $E[1] \cong (F^\ast \oplus\G \oplus F)[1]$ can be identified with $T^\ast[2]M \oplus F^\ast[1] \oplus \G[1] \oplus F[1]$, with its algebra of functions being
\[
 C^\infty(\E) \cong \Gamma(\widehat{S}(T[-2]M)  \otimes {\wedge^\bullet F} \otimes {\wedge^\bullet \G} \otimes {\wedge^\bullet F^\ast}).
\]
Here we have adopted the identification between $\G^\ast[-1]$ and $\G[-1]$ via the pseudo-metric ${g^{\G}}$.
The Rothstein dg algebra $(C^\infty(\E), d_E)$ contains two dg subalgebras:
one is the regular algebra
\[
\big(C^\bullet_{\operatorname{reg}}(E) \cong \Gamma(\widehat{S}(F[-2])  \otimes {\wedge^\bullet F} \otimes {\wedge^\bullet \G} \otimes {\wedge^\bullet F^\ast}), d_E\big),
\]
and the other is the Chevalley-Eilenberg algebra of the ample Lie algebroid $A_E \cong L(\nabla^\G, R)$
\[
   \big(C^\bullet(A_E) \cong C^\bullet(L(\nabla^\G, R)) = \Gamma({\wedge^\bullet \G} \otimes {\wedge^\bullet F^\ast}), d_{\CE}\big).
\]
Our purpose is to establish a \emph{semifull algebra contraction} from the former onto the latter.  We recall the definition of this notion below; see \cites{BCSX, Real} for more details.

\begin{defn} \label{Def of semifull algebra contraction}
  Let $(A,d_A)$ and $(B,d_B)$ be commutative dg algebras. A \textbf{contraction} $(\phi, \psi, h)$ of $(A,d_A)$ onto $(B,d_B)$ is the datum of a pair of cochain maps $\phi \colon (A, d_A) \to (B, d_B), \psi \colon (B,d_B) \to (A,d_A)$ and a contracting homotopy $h \colon A \to A[-1]$, denoted by the diagram
\[
 \begin{tikzcd}
	(A, d_A) \arrow[loop left, distance=2em, start anchor={[yshift=-1ex]west}, end anchor={[yshift=1ex]west}]{}{h} \arrow[r,yshift = 0.7ex, "\phi"] &   (B, d_B), \arrow[l,yshift = -0.7ex, "\psi"]
\end{tikzcd}
\]
satisfying the relations
  \begin{align*}
    \phi \psi &= \id_B, & [h,d_A]:= hd_A + d_Ah &= \psi\phi - \id_A,
  \end{align*}
and the side conditions
\begin{align*}
    \phi h &=0, & h\psi &= 0, & h^2 &= 0.
\end{align*}
The above contraction is said to be a \textit{semifull} algebra contraction, if the following identities are satisfied for all $a, b \in A$ and $x, y \in B$:
\begin{align*}
  h\big((-1)^{\abs{a}+1}h(a)b + ah(b)\big) &= h(a)h(b), \\
                                  h(a\psi(x)) &= h(a)\psi(x), \\
\phi\big((-1)^{\abs{a}+1}h(a)b + ah(b)\big) &= 0, \\
                              \phi(a\psi(x)) &= \phi(a) x, \\
                                  \psi(xy) &= \psi(x)\psi(y).
\end{align*}
\end{defn}
According to~\cite{BCSX}*{Remark 2.13}, the first four equations above are indeed equivalent to the following conditions introduced by Real~\cite{Real}:
\begin{align}\label{Eq: semifull condition of Real}
  h(h(a)h(b)) &= 0, & h(h(a)\psi(x)) &= 0, & \phi(h(a)h(b)) &= 0, & \phi(h(a)\psi(x)) &= 0.
\end{align}
We now state and prove the main result of this section.
\begin{thm}[The Contraction Theorem]\label{contractionCA}
There exists a semifull algebra contraction
\[	
\begin{tikzcd}
	(C^\bullet_{\operatorname{reg}}(E), d_E) \arrow[loop left, distance=2em, start anchor={[yshift=-1ex]west}, end anchor={[yshift=1ex]west}]{}{h} \arrow[r,yshift = 0.7ex, "\phi"] &   (C^\bullet(A_E), \dCE) \arrow[l,yshift = -0.7ex, "\psi"].
\end{tikzcd}
\]
 \end{thm}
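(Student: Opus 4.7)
The plan is to fix a dissection of $E$ together with a compatible metric connection, decompose the regular differential $d_E$ with respect to a Koszul-type bigrading on $C^\bullet_{\operatorname{reg}}(E)$, and build the desired contraction by perturbing the classical Koszul homotopy.

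\textbf{Setup and decomposition.} Fix a dissection $\Psi$ and a triple $(j,\nabla^F,\nabla^B)$ as in Lemma~\ref{Lem:F-connection on E}, and let $\nabla$ be the associated $\Psi$-compatible metric connection. By Proposition~\ref{Prop:CSX 2.1} the Courant algebroid $E$ becomes the standard one $S(\nabla^\G,R,H)$, so that
\[
C^\bullet_{\operatorname{reg}}(E)\cong \Gamma\bigl(\widehat{S}(F[-2])\otimes \wedge^\bullet F\otimes \wedge^\bullet \G \otimes \wedge^\bullet F^\ast\bigr),
\]
with $C^\bullet(A_E)\cong \Gamma(\wedge^\bullet \G \otimes \wedge^\bullet F^\ast)$ embedded as the Koszul-bidegree-$(0,0)$ subalgebra, where $p$ is the $\widehat{S}(F[-2])$-degree and $q$ is the $\wedge^\bullet F$-degree. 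Take $\psi$ to be this inclusion and $\phi_0$ the projection onto bidegree $(0,0)$. Using the explicit potential $\Theta = \widetilde{\rho} + C_\nabla$, decompose
\[
d_E = \delta + \dCE + \mathcal{D},
\]
where $\delta$ is the bidegree-$(+1,-1)$ contribution coming solely from the anchor term, $\dCE$ is the bidegree-$(0,0)$ component which restricts to the Chevalley--Eilenberg differential of $A_E$ on $C^\bullet(A_E)$, and $\mathcal{D}$ collects the remaining contributions from $\nabla^F$, $\nabla^B$, $R$, $H$ and the curvature $R^\nabla$. Because the $(0,0)$-subalgebra coincides with the naive complex and is therefore already $d_E$-stable, $\mathcal{D}$ vanishes on bidegree $(0,0)$.

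\textbf{Koszul contraction and perturbation.} The bigraded subcomplex $(\Gamma(\widehat{S}(F[-2])\otimes \wedge^\bullet F),\delta)$ is the classical Koszul complex for the duality pairing between $F[-2]$ and $F[-1]$, hence acyclic in positive Koszul bidegree. It carries a standard contracting homotopy $h_0$ of bidegree $(-1,+1)$, which, extended by the identity across the $\wedge^\bullet \G \otimes \wedge^\bullet F^\ast$ factor, yields a semifull algebra contraction $(\phi_0,\psi,h_0)$ of the Koszul model onto $C^\bullet(A_E)$ in the sense of~\cites{BCSX,Real}. Since $h_0$ decreases the Koszul $p$-degree by one while $\dCE+\mathcal{D}$ preserves or raises it, the operator $h_0(\dCE+\mathcal{D})$ is locally nilpotent on each homogeneous element. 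The homological perturbation lemma then delivers
\[
\phi = \phi_0 \sum_{n\ge 0}\bigl(-(\dCE+\mathcal{D})h_0\bigr)^n,\qquad h = \sum_{n\ge 0}h_0\bigl(-(\dCE+\mathcal{D})h_0\bigr)^n,
\]
forming a contraction of $(C^\bullet_{\operatorname{reg}}(E),d_E)$ onto $(C^\bullet(A_E),\dCE)$. Moreover, because $(\dCE + \mathcal{D})\psi$ remains in bidegree $(0,0)$ and $h_0$ vanishes there, the corresponding perturbation series for $\psi$ collapses and $\psi$ stays the inclusion; the transferred differential is then exactly $\dCE$.

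\textbf{Main obstacle: semifullness.} The most delicate part is verifying that the perturbed triple $(\phi,\psi,h)$ still satisfies the semifull identities of Definition~\ref{Def of semifull algebra contraction}, or equivalently~\eqref{Eq: semifull condition of Real}. These hold classically for the Koszul seed $(\phi_0,\psi,h_0)$, and multiplicativity of $\psi$ is automatic from it being a subalgebra inclusion. However, a generic homological perturbation lemma does not \emph{a priori} preserve multiplicative compatibility. The strategy is to propagate semifullness order by order in the geometric series, exploiting: (i) each summand of $\dCE+\mathcal{D}$ acts as a derivation of the commutative algebra $C^\bullet_{\operatorname{reg}}(E)$; (ii) $h_0$ enjoys the half-derivation identity $h_0\bigl(h_0(a)b+(-1)^{|a|+1}ah_0(b)\bigr) = h_0(a)h_0(b)$ and satisfies $h_0\psi = 0$, $\phi_0 h_0 = 0$; (iii) multiplication by any $\psi(x)$ preserves the Koszul bidegree. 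These together allow one to show by induction on $n$ that each iterate of the perturbation series respects the semifull identities. This step may alternatively be handled by invoking a semifull-preserving variant of the homological perturbation lemma along the lines of~\cite{BCSX}.
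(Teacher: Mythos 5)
Your route is sound and genuinely different from the paper's. The paper does not invoke the homological perturbation lemma: it writes down the homotopy in closed form, $h = (1+C_\nabla)^{-1}\circ\rho^{-1} = \sum_k(-1)^kC_\nabla^k\circ\rho^{-1}$ (Equation~\eqref{Eq:Def of h}), where $\rho^{-1}$ is exactly your Koszul homotopy $h_0$, and then verifies the two identities $h^2=0$ and $hd_Eh=-h$ using the commutation relations of Lemma~\ref{Lem:properties of rhoinverse}; Lemma~\ref{Lem:how to construct a contraction} then produces the contraction onto $\ker(hd_E)\cap\ker(d_Eh)=C^\bullet(A_E)$ with $\phi=\id+hd_E+d_Eh$. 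Your geometric series $\sum_n h_0(-(\dCE+\mathcal{D})h_0)^n$ sums to precisely this $h$ (one checks $\rho^{-1}(d_E-\delta)\rho^{-1}=-C_\nabla\rho^{-1}$ from Lemma~\ref{Lem:properties of rhoinverse}), so the two constructions agree; what the paper's presentation buys is that the side conditions and the collapse of the transferred differential come for free from Lemma~\ref{Lem:how to construct a contraction}, at the price of having to guess the closed form. Your presentation makes the perturbative origin of $h$ transparent and, importantly, lets you discharge semifullness by citing Real's theorem that a semifull algebra contraction perturbed by a derivation remains semifull \cite{Real} (the perturbation $d_E-\delta$ is a derivation, being a bihomogeneous component of the derivation $d_E$); this is arguably cleaner than the paper's ``we omit the details of the verifications,'' provided you do check that the Koszul seed $(\phi_0,\psi,h_0)$ itself satisfies~\eqref{Eq: semifull condition of Real}, which is standard.

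One assertion in your nilpotency argument is backwards and, as written, does not yield the conclusion. You claim $\dCE+\mathcal{D}$ ``preserves or raises'' the $p$-degree; if that were so, $h_0(\dCE+\mathcal{D})$ could preserve $p$ and local nilpotency would not follow. What is actually true (read off from Lemma~\ref{dEonF12}) is that every component of $d_E-\delta$ preserves or \emph{lowers} $p$: the terms $-\nabla_xC_\nabla$ and $R^\nabla(\rhothree[2],x[-2])$ send a $p=1$ generator into $p=0$, and $y[-1]\mapsto\iota_yC_\nabla$ has bidegree $(0,-1)$. With the correct direction, $h_0(\dCE+\mathcal{D})$ strictly lowers $p$, and since $p\geqslant 0$ is bounded above by half the total degree on each homogeneous element, local nilpotency holds. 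So the gap is a wrong inequality in a justification, not a failure of the method; fix the direction and the argument goes through.
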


To prove this theorem, we need an explicit expression of the standard differential $d_E =\{\Theta, -\} = \{\rhothree+C_{\nabla}, - \}$.
First of all, by substituting expressions of the Dorfman bracket in Proposition~\ref{Prop:CSX 2.1} and the metric connection $\nabla$ in Lemma~\ref{Lem:F-connection on E} into the definition of $C_\nabla$ in~\eqref{Eq:Def of torsion}, we obtain the following explicit expression of the torsion $C_\nabla$.
\begin{lem}\label{Lem:Cnabla}
  The torsion $C_\nabla$ is a Chevalley-Eilenberg $3$-cochain of the ample Lie algebroid $A_E$, i.e. $C_\nabla \in C^3(A_E)$. Explicitly, one has
  \[
  \iota_{\xi} C_\nabla=0,
  \]
 for all $\xi \in \Gamma(F^\ast)$, and
 \begin{align}\nonumber
  &\quad C_\nabla(  r +x,  s+y,  t +z) \\ \label{Eqt:Cnablaexplict}
  &= {C^\G}(r,s,t) + {g^{\G}}(R(x,y),t) + {g^{\G}}(R(y,z),r) + {g^{\G}}(R(z,x),s) + \frac{1}{2}H(x,y,z),
 \end{align}
 for all $r,s,t \in \Gamma(\G)$ and $x,y,z \in \Gamma(F)$. Here ${g^{\G}}$ is the induced ad-invariant pseudo-metric on $\G$, ${C^\G}$ the Cartan $3$-form on $\G$,  $R$ the linear operator defined by Equation~\eqref{Eq:FFtoG}, and $H$ the map defined by Equation~\eqref{Eq:FFtoFast}.
\end{lem}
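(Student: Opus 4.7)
The plan is to prove both assertions by a direct case-by-case computation, substituting the explicit Dorfman brackets of Proposition~\ref{Prop:CSX 2.1} and the metric connection $\nabla = \nabla^F + \nabla^\G + \nabla^B$ of Lemma~\ref{Lem:F-connection on E} into the defining formula~\eqref{Eq:Def of torsion} for $C_\nabla$. A preliminary observation I will use throughout is that the pseudo-metric $\langle -,-\rangle$ on $F^\ast \oplus \G \oplus F$ pairs $F^\ast$ only with $F$ and $\G$ only with $\G$, and that $R$ is skew-symmetric (which follows from $x \circ x = \partial g(x,x) = 0$ for $x \in \Gamma(F)$, since $g$ vanishes on $F$).

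For the vanishing statement $\iota_\xi C_\nabla = 0$ with $\xi \in \Gamma(F^\ast)$, I would set $e_1 = \xi$ in~\eqref{Eq:Def of torsion}. Since $\rho(\xi) = 0$, the term $\nabla_{\rho(\xi)} e_2$ drops out. By Proposition~\ref{Prop:CSX 2.1}, the remaining Dorfman brackets $\xi \circ e_2$ and $e_2 \circ \xi$ vanish when $e_2 \in \Gamma(F^\ast \oplus \G)$ and land in $\Gamma(F^\ast)$ when $e_2 \in \Gamma(F)$, while $\nabla_{\rho(e_2)}\xi \in \Gamma(F^\ast)$ as well. Hence the only potentially surviving contributions in the cyclic sum occur when exactly one of $e_2, e_3$ lies in $\Gamma(F)$ (say $e_3 = z \in \Gamma(F)$), in which case the surviving terms collect the expressions $L_y\xi$, $d_F\langle \xi\mid y\rangle$ and $\nabla^F_y \xi$ paired with $F$-entries; the torsion-freeness of $\nabla^F$ then makes these combinations cancel.

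For the explicit formula~\eqref{Eqt:Cnablaexplict}, I would compute $C_\nabla(r+x, s+y, t+z)$ by splitting into contributions according to the $F$-/$\G$-types of the three entries. The purely $\G$-valued piece $C_\nabla(r,s,t)$ gives ${C^\G}(r,s,t)$: here $\rho$ annihilates all entries, the $F^\ast$-components $P(-,-)$ of the brackets are killed by the pairing against $\G$-entries, and the remaining $\G$-components $[r,s]^\G$ etc.\ recombine under the cyclic sum via ad-invariance of $g^\G$. For the mixed piece $C_\nabla(x,y,t)$ with $x,y \in \Gamma(F)$, $t \in \Gamma(\G)$, Proposition~\ref{Prop:CSX 2.1} yields $x\circ y - y\circ x = 2([x,y] + R(x,y)) + H(x,y,-) - H(y,x,-)$, while $\nabla_x(y) - \nabla_y(x) = \nabla^F_x y - \nabla^F_y x = [x,y]$ by torsion-freeness; pairing with $t$ kills the $F^\ast$ pieces and leaves $\frac{2}{3}g^\G(R(x,y),t)$, and the cross-terms from the other two cyclic slots produce exactly the $-2Q$ correction that, by the definition of $Q$ and the $\nabla^\G$-compatibility of $g^\G$, assembles into the single coefficient $g^\G(R(x,y),t)$ after the cyclic sum. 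The purely $F$-valued piece $C_\nabla(x,y,z)$ similarly produces $\tfrac12 H(x,y,z)$ after the $F^\ast$-$F$ pairing.

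The main obstacle will be the bookkeeping: many terms appear across the three cyclic summands, and one must repeatedly invoke ad-invariance of $g^\G$, skew-symmetry of $R$, torsion-freeness of $\nabla^F$, and the $g^\G$-compatibility of $\nabla^\G$ to see that all contributions not accounted for on the right-hand side of~\eqref{Eqt:Cnablaexplict} cancel pairwise. Once these symmetries are carefully marshalled, the formula falls out term by term, and the vanishing $\iota_\xi C_\nabla = 0$ certifies that $C_\nabla$ descends to an element of $\Gamma(\wedge^3 \ker\rho) = C^3(A_E)$.
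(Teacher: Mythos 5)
Your proposal is correct and follows exactly the route the paper takes: the paper derives Lemma~\ref{Lem:Cnabla} by direct substitution of the Dorfman bracket formulas of Proposition~\ref{Prop:CSX 2.1} and the connection $\nabla = \nabla^F + \nabla^\G + \nabla^B$ of Lemma~\ref{Lem:F-connection on E} into the defining Equation~\eqref{Eq:Def of torsion}, with no further argument recorded. Your auxiliary observations (skew-symmetry of $R$ and $H$ from $x \circ x = 0$, the pairing rules for $\langle -,-\rangle$, torsion-freeness of $\nabla^F$ forcing the cancellations in $\iota_\xi C_\nabla$, and the cyclic bookkeeping that produces the coefficients $1$ and $\tfrac12$) are all accurate and suffice to carry out the computation.
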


\begin{Rem}\label{Rem:3charclass}
	In~\cite{CSX}, the second author with Sti\'{e}non and Xu introduced for each regular Courant algebroid $E$ an intrinsic degree $3$ characteristic class $[C] \in H^3_{\CE}(A_E)$.
	Indeed, when we identity $E$ with the standard one $F^\ast \oplus F \oplus \G$ via a dissection $\Psi$, and choose a particular $E$-connection\footnote{Recall from~\cites{AX, CSX, GMP} that an $E$-connection on $E$ is a bilinear map $\nabla^E \colon \Gamma(E) \otimes_{\mathbb{R}} \Gamma(E) \to \Gamma(E)$ satisfying $\nabla^E_{fe_1}e_2 = f\nabla^E_{e_1}e_2$ and $\nabla^E_{e_1}fe_2 = f\nabla^E_{e_1}e_2 + (\rho(e_1)f) e_2$ for all $f \in C^\infty(M)$ and $e_1,e_2 \in \Gamma(E)$.}  $\nabla^E$ on $E$ defined by
	\begin{equation}\label{Eqt:nablaE}
		\nabla^E_{\xi+r+x}(\eta+s+y) = \nabla^F_x \eta - \frac{1}{3}H(x,y,-) + \nabla_x^\G s + \frac{2}{3}[r,s]^\G + \nabla^F_x y,
	\end{equation}
	for any metric torsion-free $F$-connection $\nabla^F$ on $(F^\ast \oplus F, \langle -, - \rangle)$, the class $[C]$ is represented by the associated Courant algebroid torsion $C_{\nabla^E}$  introduced in \cite{AX} and defined by
	\begin{equation}\label{Eqt:CnablaE}
	C_{\nabla^E}(e_1,e_2,e_3) = \frac{1}{2} \operatorname{cycl}_{123}g\left(\frac{1}{3}(e_1 \circ e_2 - e_2 \circ e_1) - (\nabla_{e_1}^Ee_2 - \nabla^E_{e_2}e_1), e_3 \right).
	\end{equation}
Note that any metric $TM$-connection $\nabla$ on $E$ induces an $E$-connection $\nabla^E$ on $E$ by setting $\nabla^E_{e_1}e_2 = \nabla_{\rho(e_1)}e_2$. The associated Courant algebroid torsion $C_{\nabla^E}$ coincides with the torsion $C_\nabla$ defined in Equation~\eqref{Eq:Def of torsion}.
	
	By substituting the particular $\nabla^E$ in Equation \eqref{Eqt:nablaE}, we get
	\[
\iota_{\xi}C_{\nabla^E}=0,\qquad \forall \xi\in \Gamma(F^\ast)
	\]
	and
	\begin{align}\nonumber
		&\quad C_{\nabla^E}(  r+x,  s+y,  t+z) \\\label{Eqt:CnablaEexplict}
		&=  - {C^\G}(r,s,t) + {g^{\G}}(R(x,y), t) + {g^{\G}}(R(y,z), r) + {g^{\G}}(R(z,x), s)+H(x,y,z).
	\end{align}
	
	So, the torsion  $C_{\nabla}$ in Equation \eqref{Eqt:Cnablaexplict} of the current paper is \textit{not} the one $C_{\nabla^E}$ in \cite{CSX} as the former is merely a $3$-cochain while the latter is $3$-cocycle.
\end{Rem}
Next, we note that the degree $3$ element $\rhothree \in \Gamma(F^\ast[-1] \otimes F[-2]) \subset C^\infty(\E)$ stemming from the anchor map $\rho$ is locally of the form $\sum_i \xi^i[-1] \otimes x_i[-2]$.
Here $\{x_i\}$ is a local frame of $\Gamma(F)$ and $\{\xi^i\}$ the dual frame of $\Gamma(F^\ast)$.
Denote by $\rhothree[2]$ $\in \Gamma(F^\ast[-1] \otimes F )$ the degree $1$ element which is locally expressed by $\sum_i \xi^i[-1] \otimes x_i$.
\begin{lem}\label{dEonF12}
  For all $x  \in \Gamma(F) \subseteq \Gamma(TM), y  \in \Gamma(F) \subset \Gamma(E )$,  we have
  \begin{align*}
    d_E(x[-2]) &= \nabla^F_{\rhothree[2]} (x[-2]) + R^\nabla(\rhothree[2], x[-2]) - \nabla_x C_\nabla, \\
  \mbox{~and~}~  d_E(y[-1]) &= y[-2] + \nabla^F_{\rhothree[2]} (y[-1]) + {\iota_y C_\nabla},
  \end{align*}
  where $\nabla^F_{\rhothree[2]} (x[-2])$ is locally expressed by
  $\sum_i \xi^i[-1] \otimes (\nabla^F_{x_i}x)[-2]$,  $R^\nabla(\rhothree[2], x[-2])$ by $\sum_i \xi^i[-1] \otimes R^\nabla(x_i,x)[-2]$,   $\nabla^F_{\rhothree[2]} (y[-1])$ by $\sum_i \xi^i[-1] \otimes (\nabla^F_{x_i} y)[-1]$, and ${\iota_y C_\nabla} \in \Gamma(\wedge^2 F^\ast\oplus (F^\ast[-1]\otimes \G[-1]) )\subset C^2(A_E)$ stands for the contraction   of the torsion $C_\nabla$ along $y \in \Gamma(F)$.
\end{lem}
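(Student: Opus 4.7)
The plan is to compute $d_E = \{\Theta, -\} = \{\rhothree, -\} + \{C_\nabla, -\}$ directly, using the explicit Poisson brackets in Equations~\eqref{GlobalPoisson}--\eqref{Poisson bracket on E[1]}, the local expression $\rhothree = \sum_i \xi^i[-1] \cdot x_i[-2]$ introduced just above the lemma, and the fact recorded in Lemma~\ref{Lem:F-connection on E} that the $\Psi$-compatible connection $\nabla$ restricts to $\nabla^F$ on both $F$ and $F^\ast$.

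For $d_E(x[-2])$ I would expand $\{\rhothree, x[-2]\}$ by the graded Leibniz rule into two kinds of contributions. The bracket $\{x_i[-2], x[-2]\} = [x_i, x][-2] + R^\nabla(x_i, x)[-2]$, paired with $\xi^i[-1]$ and summed over $i$, immediately yields the curvature term $R^\nabla(\rhothree[2], x[-2])$ together with a commutator piece $\sum_i \xi^i[-1] \cdot [x_i, x][-2]$. The bracket $\{\xi^i[-1], x[-2]\}$, by graded skew-symmetry, equals $\mp(\nabla^F_x \xi^i)[-1]$, giving $\mp \sum_i (\nabla^F_x \xi^i)[-1] \cdot x_i[-2]$. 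Using the torsion-freeness of $\nabla^F$ to write $[x_i, x] = \nabla^F_{x_i} x - \nabla^F_x x_i$, and the $\nabla^F$-parallelism of the identity $\sum_i \xi^i \otimes x_i \in \Gamma(F^\ast \otimes F)$ (which implies $\sum_i (\nabla^F_x \xi^i) \otimes x_i = -\sum_i \xi^i \otimes \nabla^F_x x_i$), the two $\nabla^F_x x_i$ contributions cancel, leaving exactly $\nabla^F_{\rhothree[2]}(x[-2])$. The remaining term $\{C_\nabla, x[-2]\}$ is handled by noting that $\{x[-2], -\}$ is a derivation which acts on degree-$1$ generators $e[-1]$ as $(\nabla_x e)[-1]$; applied to $C_\nabla \in \Gamma(\wedge^3 E^\ast)$ this gives $\nabla_x C_\nabla$, and graded skew-symmetry flips the sign to produce $-\nabla_x C_\nabla$.

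For $d_E(y[-1])$ with $y \in \Gamma(F)$, the bracket $\{\rhothree, y[-1]\}$ again splits by Leibniz. The mixed bracket $\{\xi^i[-1], y[-1]\} = g(\xi^i, y)$, multiplied by $x_i[-2]$ and summed, recovers the degree-$2$ element $y[-2]$ through the $g$-duality of the frames $\{\xi^i\} \subset F^\ast$ and $\{x_i\} \subset F$ built into the dissection convention. The bracket $\{x_i[-2], y[-1]\} = (\nabla_{x_i} y)[-1]$ equals $(\nabla^F_{x_i} y)[-1]$ by Lemma~\ref{Lem:F-connection on E} (since $y \in \Gamma(F)$), and pairing with $\xi^i[-1]$ and summing produces $\nabla^F_{\rhothree[2]}(y[-1])$. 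Finally, $\{C_\nabla, y[-1]\}$ is evaluated by writing $C_\nabla$ as a polynomial in degree-$1$ generators $e[-1]$ and applying Leibniz together with $\{e[-1], y[-1]\} = g(e, y)$; by definition of the contraction operator, the outcome is $\iota_y C_\nabla$, which by Lemma~\ref{Lem:Cnabla} indeed lies in $\Gamma(\wedge^2 F^\ast \oplus (F^\ast[-1] \otimes \G[-1])) \subset C^2(A_E)$.

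The chief nuisance in carrying out this plan is the careful bookkeeping of graded signs throughout the repeated applications of Leibniz and skew-symmetry for the degree-$(-2)$ Poisson bracket, together with the consistent use of the identification $E^\ast \cong E$ induced by $g$ and the compatibility of the dual-frame convention with the factor-$\tfrac{1}{2}$ pseudo-metric on $F^\ast \oplus \G \oplus F$. Once these are fixed compatibly with the derived-bracket presentation~\eqref{Eq:derived bracket 1}--\eqref{Eq:derived bracket 2} of the anchor and Dorfman bracket, both identities follow by a direct, purely local calculation.
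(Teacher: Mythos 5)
Your proposal is correct and follows essentially the same route as the paper's own proof: a direct local computation of $\{\rhothree,-\}$ and $\{C_\nabla,-\}$ using the bracket relations~\eqref{GlobalPoisson}--\eqref{Poisson bracket on E[1]}, the frame expression $\rhothree=\sum_i\xi^i[-1]\otimes x_i[-2]$, the torsion-freeness of $\nabla^F$ (combined with parallelism of the identity section to produce $\nabla^F_{\rhothree[2]}$), and Lemma~\ref{Lem:Cnabla} for the $C_\nabla$ terms. The sign and factor-of-$\tfrac{1}{2}$ bookkeeping you flag is exactly the only delicate point, and the paper handles it the same way.
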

\begin{proof}
We directly verify the desired formulas:
\begin{align*}
 &\quad d_E(x[-2]) = \{\rhothree, x[-2]\} + \{C_\nabla, x[-2]\} \\
 &= -\{x[-2], \xi^{i}[-1] \otimes x_i[-2]\} - \{x[-2], C_\nabla\} \qquad\qquad  \text{by Eqs.~\eqref{GlobalPoisson}, \eqref{Eq:Poisson   bracket 2} and~\eqref{Poisson bracket on E[1]}}\\
 &= -\nabla^F_x \xi^i[-1] \otimes x_i[-2] - \xi^i[-1] \otimes [x,x_i][-2] - \xi^{i}[-1] \otimes R^\nabla(x,x_i)[-2] - \nabla_x C_\nabla \\
 &= \xi^i[-1] \otimes (\nabla^F_x x_i - [x,x_i])[-2] - \xi^{i}[-1] \otimes R^\nabla(x,x_i)[-2] - \nabla_x C_\nabla \quad \text{since $\nabla^F$ is torsion-free} \\
 &= \xi^i[-1] \otimes (\nabla^F_{x_i} x)[-2] + \xi^{i}[-1] \otimes R^\nabla(x_i, x)[-2] - \nabla_x C_\nabla \\
 &= \nabla^F_{\rhothree[2]} (x[-2]) + R^\nabla(\rhothree[2], x[-2]) - \nabla_x C_\nabla;
\end{align*}
and
\begin{align*}
  d_E(y[-1]) &= \{\rhothree, y[-1]\} + \{C_\nabla, y[-1]\}  \qquad \quad \qquad \text{by Lemma~\ref{Lem:Cnabla}} \\
  &= \{y[-1], \xi^i[-1] \otimes x_i[-2]\} + {\iota_y C_\nabla}   \qquad \text{by Eqs.~\eqref{GlobalPoisson},\eqref{Eq:Poisson   bracket 2} and \eqref{Poisson bracket on E[1]}}\\
  &= y[-2] + \xi^i[-1]\otimes (\nabla^F_{x_i} y)[-1] + {\iota_y C_\nabla} \\
  &= y[-2] + \nabla^F_{\rhothree[2]} (y[-1]) + {\iota_y C_\nabla}.
\end{align*}
\end{proof}

We also note that the regular complex $C^\bullet_{\operatorname{reg}}(E)$ is a $C^\bullet(A_E)$-module. Via the dissection $\Psi$ of $E$ and the splitting $j $ of the short exact sequence \eqref{Eq:SES of VB}, the Chevalley-Eilenberg dg algebra $C^\bullet(A_E)$ of the ample Lie algebroid $A_E$ is generated by $\Gamma(\G[-1])$ and $\Gamma( F^\ast[-1])$, while the $C^\bullet(A_E)$-module  $C^\bullet_{\operatorname{reg}}(E)$ is generated by $\Gamma(F[-2])$ and $\Gamma(F[-1])$.
In order to write down the desired contracting homotopy, we introduce two auxiliary \textit{endomorphisms} of this $C^\bullet(A_E)$-module as follows.
Firstly, the torsion element $C_\nabla \in C^3(A_E)$ determines an  $C^\bullet(A_E)$-linear   map $C_\nabla\colon C^\bullet_{\operatorname{reg}}(E) \to C^\bullet_{\operatorname{reg}}(E)$ by setting
\begin{compactenum}
	\item $C_\nabla(\Gamma(  \G[-1]\oplus F[-1]\oplus F^\ast[-1]))=0$;
	\item $C_\nabla(x[-2]):={\iota_x C_\nabla}$ for all $x \in \Gamma(F)$;
	\item
\begin{align*}
  &\quad C_\nabla(x_1[-2]\odot \cdots \odot x_p[-2] \otimes y_1[-1] \odot \cdots \odot y_q [-1]) \\
   &:= \frac{1}{p+q-1}\sum_{i=1}^{p}{(\iota_{x_i} C_\nabla)} \otimes x_1[-2]\odot \cdots \odot \widehat{x_i[-2]} \odot \cdots \odot x_p[-2]\otimes y_1[-1] \odot \cdots \odot y_q [-1],
\end{align*}
 for all $p+q \geqslant 2$ and all $x_i, y_j  \in \Gamma(F)$.
\end{compactenum}
The second $C^\bullet(A_E)$-linear map
\[
  \rho^{-1} \colon C^\bullet_{\operatorname{reg}}(E) \to C^{\bullet-1}_{\operatorname{reg}}(E)
\]
is determined by
\begin{compactenum}
	\item $\rho^{-1}(\Gamma(\G[-1]\oplus F[-1]\oplus F^\ast[-1]))=0$;
	\item
\begin{align*}
 &\quad \rho^{-1}(x_1[-2] \odot \cdots \odot x_p[-2] \otimes y_1[-1] \odot \cdots \odot y_q [-1]) \\
 &:= \frac{-1}{p+q}\sum_{i=1}^{p} x_1[-2] \odot \cdots \odot \widehat{x_i[-2]} \odot \cdots \odot x_p[-2] \otimes x_i[-1] \odot y_1[-1] \odot \cdots \odot y_q [-1]
\end{align*}
for all $p\geqslant 1, q\geqslant 0$ and all $x_1 ,\cdots,x_p, y_1 ,\cdots,y_q \in \Gamma(F)$.
\end{compactenum}
\begin{lem}\label{Lem:properties of rhoinverse}
  The two maps $C_\nabla$ and $\rho^{-1}$ introduced above satisfy the following properties:
  \begin{align*}
    (\rho^{-1})^2 &= 0 \colon ~C^\bullet_{\operatorname{reg}}(E) \to C^{\bullet-2}_{\operatorname{reg}}(E) , \\
   \rho^{-1}(x \odot y) &= \frac{p+q}{p+q+r+s} \rho^{-1}(x) \odot y + (-1)^{q}\frac{r+s}{p+q+r+s} x \odot \rho^{-1}(y),
  \end{align*}
  for all $x \in \Gamma(S^p(F[-2]) \otimes \wedge^q F )$ and $y \in \Gamma(S^r(F[-2]) \otimes \wedge^s F )$, and
  \begin{align}\label{Eq:rhoinverse and Cnabla}
   \rho^{-1}d_E C_\nabla^k \rho^{-1} &= C_\nabla^k \rho^{-1} d_E \rho^{-1} = - C^k_\nabla\rho^{-1} - C_\nabla^{k+1} \rho^{-1} \colon  C^\bullet_{\operatorname{reg}}(E) \to C^\bullet_{\operatorname{reg}}(E)[-1],
  \end{align}
 for all $k \geqslant  0$, where $C_\nabla^k$ denotes the $k$-th power of the operator $C_\nabla$.
\end{lem}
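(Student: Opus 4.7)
The plan is to prove the three assertions in order, with (1) and (2) being direct combinatorial identities coming from the definition of $\rho^{-1}$, and (3) being the substantive step that requires decomposing $d_E$ and invoking a Cartan-type homotopy argument.

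For (1), applying $\rho^{-1}$ twice to a generic element $Z = x_1[-2]\odot\cdots\odot x_p[-2]\otimes y_1[-1]\odot\cdots\odot y_q[-1]$ produces a double sum over ordered pairs of distinct $[-2]$-indices $(i,j)$, each term proportional to $x_j[-1]\odot x_i[-1]$ inserted at the front of the wedge factor. Since the numerical coefficient is symmetric under $(i,j)\leftrightarrow(j,i)$ while $x_j[-1]\odot x_i[-1]=-x_i[-1]\odot x_j[-1]$ in the wedge algebra, the pair-swapped terms cancel, giving $(\rho^{-1})^2=0$.

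For (2), I partition the defining sum of $\rho^{-1}(x\odot y)$ into the $[-2]$-factors contributed by $x$ and those contributed by $y$. Writing $m=p+q$ and $n=r+s$, the common prefactor $-\tfrac{1}{m+n}$ is redistributed as $\tfrac{m}{m+n}\cdot(-\tfrac{1}{m})$ and $\tfrac{n}{m+n}\cdot(-\tfrac{1}{n})$, matching $\rho^{-1}(x)\odot y$ and $x\odot\rho^{-1}(y)$ respectively. The sign $(-1)^q$ in the second summand arises from moving the newly created $F[-1]$-factor of $\rho^{-1}(y)$ past the $q$ pre-existing $F[-1]$-factors of $x$.

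For (3), I first treat the second identity: it suffices to prove the $n=0$ case $\rho^{-1}d_E\rho^{-1}=-\rho^{-1}-C_\nabla\rho^{-1}$, as left-multiplication by the $C^\bullet(A_E)$-linear endomorphism $C_\nabla^n$ yields the general statement. Using Lemma~\ref{dEonF12}, I decompose $d_E=\delta+D$, where $\delta$ is the graded derivation characterized on generators by $\delta(y[-1])=y[-2]$ for $y\in\Gamma(F)$ and vanishing on all other generators. A case analysis, separating the action of $\delta$ on the new $F[-1]$-factor produced by $\rho^{-1}$ from its action on the pre-existing ones, establishes the Cartan-type identity $\rho^{-1}\delta+\delta\rho^{-1}=-\id$ on elements carrying at least one $F[-2]$- or $F[-1]$-factor. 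Combined with (1), this gives $\rho^{-1}\delta\rho^{-1}=-\rho^{-1}$. It remains to show $\rho^{-1}D\rho^{-1}=-C_\nabla\rho^{-1}$: the contributions of $\nabla^F_{\rhothree[2]}$-type terms cancel after the outer $\rho^{-1}$ by a dual-frame symmetrization in $\sum_i\xi^i\otimes x_i$, while the curvature $R^\nabla(\rhothree[2],-)$ together with $\iota_y C_\nabla$ and $-\nabla_x C_\nabla$ reorganize, via the explicit description of the torsion in Lemma~\ref{Lem:Cnabla} and the defining formula for the endomorphism $C_\nabla$, into exactly $-C_\nabla\rho^{-1}$.

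The first identity $\rho^{-1}d_E C_\nabla^n\rho^{-1}=-C_\nabla^n\rho^{-1}-C_\nabla^{n+1}\rho^{-1}$ does \emph{not} follow by right-multiplication, since $\rho^{-1}$ and $C_\nabla$ fail to commute; their defining denominators differ by a shift of $1$. I expect this non-commutation to be the main obstacle. The plan is to establish an intermediate relation of the form $\rho^{-1}d_E C_\nabla\equiv C_\nabla\rho^{-1}d_E$ modulo terms annihilated by the outermost $\rho^{-1}$ on the image of $\rho^{-1}$, and then induct on $n$ from the $n=0$ case just proven.
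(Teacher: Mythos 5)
Your treatment of the first two identities is correct and in fact more detailed than the paper's, which simply asserts that they follow from the definition of $\rho^{-1}$: the pairwise cancellation by antisymmetry of the odd factors for $(\rho^{-1})^2=0$ and the redistribution of the prefactor $-1/(p+q+r+s)$ together with the Koszul sign $(-1)^q$ for the Leibniz-type rule are exactly the right mechanisms. Likewise, reducing the middle equality of \eqref{Eq:rhoinverse and Cnabla} to the case $n=0$ by left-composing with the $C^\bullet(A_E)$-linear operator $C_\nabla^n$ is what the paper implicitly does, and your decomposition $d_E=\delta+D$ with the Koszul-type homotopy $\rho^{-1}\delta+\delta\rho^{-1}=-\id$ is a legitimate, somewhat more structured reorganization of the paper's direct computation of $\rho^{-1}d_E\rho^{-1}=-\rho^{-1}-C_\nabla\rho^{-1}$ via Lemma~\ref{dEonF12}.

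The gap is in the first equality, $\rho^{-1}d_EC_\nabla^n\rho^{-1}=-C_\nabla^n\rho^{-1}-C_\nabla^{n+1}\rho^{-1}$, which is precisely the identity used later to compute $hd_Eh=-h$ in the proof of Theorem~\ref{contractionCA}, so it cannot be left as a plan. You correctly diagnose the obstacle (the denominators of $\rho^{-1}$ and $C_\nabla$ are shifted by one), but the intermediate relation you propose, $\rho^{-1}d_EC_\nabla\equiv C_\nabla\rho^{-1}d_E$ modulo terms killed by the outer $\rho^{-1}$, is neither proven nor plausible as stated: $d_E$ acts inhomogeneously on the bidegree $(p,q)$ (its $\delta$-component sends $(p,q)$ to $(p+1,q-1)$, the connection terms preserve it, the contraction terms lower $q$), so commuting $C_\nabla$ across $d_E$ introduces different rational factors on different components and no single congruence of that shape holds. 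The argument that closes the induction commutes $C_\nabla$ past $\rho^{-1}$ only, using the exact scalar relation $\rho^{-1}C_\nabla=\tfrac{p+q}{p+q-1}\,C_\nabla\rho^{-1}$ on $\Gamma(S^p(F[-2])\otimes\wedge^qF)$: one writes $\rho^{-1}d_EC_\nabla^{k+1}\rho^{-1}=\tfrac{p+q-1}{p+q}\,(\rho^{-1}d_EC_\nabla^{k}\rho^{-1})\circ C_\nabla$ on such an element, applies the inductive hypothesis in the form $\rho^{-1}d_EC_\nabla^{k}\rho^{-1}=C_\nabla^{k}\rho^{-1}d_E\rho^{-1}$ together with the $n=0$ identity, and then commutes back using the reciprocal factor $\tfrac{p+q}{p+q-1}$, so that the two scalars cancel and the right-hand side $-C_\nabla^{k+1}\rho^{-1}-C_\nabla^{k+2}\rho^{-1}$ emerges. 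You would need to supply this (or an equivalent) bookkeeping of the commutation scalars to complete the proof.
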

\begin{proof}
 The first two properties follow directly from the definition of $\rho^{-1}$.
To prove Equation~\eqref{Eq:rhoinverse and Cnabla}, we argue by induction on $k$. For the $k = 0$ case, we need to show that
\begin{equation}\label{Eq:k=0}
   \rho^{-1} d_E \rho^{-1} = -\rho^{-1} - C_\nabla \rho^{-1}.
\end{equation}
In fact, since
\begin{align*}
  &\quad (d_E\rho^{-1})(x_1[-2] \odot \cdots \odot x_p[-2] \otimes y_1[-1] \odot \cdots \odot y_q [-1]) \\
  &= \frac{-1}{p+q}d_E\left(\sum_{i=1}^{p}x_1[-2]\odot \cdots \odot \widehat{x_i[-2]} \odot \cdots \otimes x_i[-1] \odot y_1[-1] \odot \cdots \odot y_q [-1] \right) \\
  &= \frac{-1}{p+q}\left(\sum_{i,j=1}^{p}(d_E(x_j[-2])\odot\cdots \odot \widehat{x_i[-2]} \odot \widehat{x_j[-2]} \odot \cdots \otimes x_i[-1] \odot y_1[-1] \odot \cdots \odot y_q [-1]\right) \\
  &\quad - \frac{1}{p+q}\sum_{i=1}^{p} x_1[-2] \odot \cdots \odot \widehat{x_i[-2]} \odot \cdots \odot x_p[-2] \otimes d_E\left(x_i[-1] \odot y_1[-1] \odot \cdots \odot y_q [-1]\right),
\end{align*}
and according to  Lemma~\ref{dEonF12}, we obtain
\begin{align*}
  &\quad (\rho^{-1}d_E\rho^{-1})(x_1[-2] \odot \cdots \odot x_p[-2] \otimes y_1[-1] \odot \cdots \odot y_q [-1]) \\
  &= -(\rho^{-1}+C_\nabla\rho^{-1})(x_1[-2] \odot \cdots \odot x_p[-2] \otimes y_1[-1] \odot \cdots \odot y_q [-1]).
\end{align*}
Now assume that Equation~\eqref{Eq:rhoinverse and Cnabla} holds for some $k \geqslant 0$, i.e.,
\begin{equation}\label{Eq: inductive assumption k}
  \rho^{-1}d_E C_\nabla^k \rho^{-1} = C_\nabla^k \rho^{-1} d_E \rho^{-1}.
\end{equation}
We proceed to prove that Equation~\eqref{Eq:rhoinverse and Cnabla} holds for $k+1$. 
Note that
\begin{align*}
 &\quad  (\rho^{-1}C_\nabla)(x_1[-2] \odot \cdots \odot x_p[-2] \otimes y_1[-1] \odot \cdots \odot y_q [-1]) \\
 &= \frac{p+q}{p+q-1}(C_\nabla\rho^{-1})(x_1[-2] \odot \cdots \odot x_p[-2] \otimes y_1[-1] \odot \cdots \odot y_q [-1]).
\end{align*}
Thus, we have
\begin{align*}
  &\quad (\rho^{-1}d_E C_\nabla^{k+1} \rho^{-1})(x_1[-2] \odot \cdots \odot x_p[-2] \otimes y_1[-1] \odot \cdots \odot y_q [-1]) \\
  &= (\rho^{-1}d_E C_\nabla^k) (C_\nabla \rho^{-1})(x_1[-2] \odot \cdots \odot x_p[-2] \otimes y_1[-1] \odot \cdots \odot y_q [-1]) \\
  &= \frac{p+q-1}{p+q}(\rho^{-1}d_E C_\nabla^k\rho^{-1}C_\nabla)(x_1[-2] \odot \cdots \odot x_p[-2] \otimes y_1[-1] \odot \cdots \odot y_q [-1])\; \text{by Eq.~\eqref{Eq: inductive assumption k}}\\
  &= \frac{p+q-1}{p+q}(C_\nabla^k\rho^{-1}d_E\rho^{-1} C_\nabla)(x_1[-2] \odot \cdots \odot x_p[-2] \otimes y_1[-1] \odot \cdots \odot y_q [-1]) \; \text{by Eq.~\eqref{Eq:k=0}} \\
  &= -\frac{p+q-1}{p+q}(C_\nabla^k+C_\nabla^{k+1})(\rho^{-1}C_\nabla)(x_1[-2] \odot \cdots \odot x_p[-2] \otimes y_1[-1] \odot \cdots \odot y_q [-1]) \\
  &= -(C_\nabla^k+C_\nabla^{k+1})(C_\nabla\rho^{-1})(x_1[-2] \odot \cdots \odot x_p[-2] \otimes y_1[-1] \odot \cdots \odot y_q [-1]) \\
  &= (-C_\nabla^{k+1}\rho^{-1} - C_\nabla^{k+2}\rho^{-1})(x_1[-2] \odot \cdots \odot x_p[-2] \otimes y_1[-1] \odot \cdots \odot y_q [-1]),
\end{align*}
as desired.
\end{proof}

We need the following well-known method to produce contractions from a degree $(-1)$ map satisfying certain conditions. A proof of this fact is given in Appendix~\ref{APP: Proof of contraction lemma} for completeness.
\begin{lem}\label{Lem:how to construct a contraction}
  Let $(A, d_A)$ be a cochain complex. Given a degree $(-1)$ map $h \colon A \to A$ satisfying
  \begin{equation}\label{Eq:condition on h}
     h^2  = 0 \qquad\qquad  \mbox{and} \qquad \qquad   hd_Ah  = -h,
  \end{equation}
  there exists a contraction of $(A, d_A)$ onto the subcomplex $B := \ker(hd_A) \cap \ker(d_A h)\subset A$,
  \[
 \begin{tikzcd}
	(A, d_A) \arrow[loop left, distance=2em, start anchor={[yshift=-1ex]west}, end anchor={[yshift=1ex]west}]{}{h} \arrow[r,yshift = 0.7ex, "\phi"] &   (B, d_B:= d_A\mid_B) \arrow[l,yshift = -0.7ex, "\psi"],
\end{tikzcd}
\]
where $\psi \colon B \hookrightarrow A$ is the inclusion, and $\phi \colon A \to B$ is the projection
\begin{equation}\label{Eq:Def of sigma}
 \phi = \id_A + (hd_A + d_A h).
\end{equation}
\end{lem}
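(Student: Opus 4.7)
The plan is to verify each condition of Definition~\ref{Def of semifull algebra contraction} by direct manipulation, using only the two hypotheses $h^2 = 0$ and $hd_Ah = -h$. The conceptual organizing principle is that the definition $\phi = \id_A + hd_A + d_Ah$ is tailor-made so that the homotopy identity $\psi\phi - \id_A = hd_A + d_A h$ holds by fiat; everything else reduces to routine verification, modulo one algebraic step exploiting $hd_Ah=-h$.

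I would begin with two preliminary facts. First, $B$ is stable under $d_A$: for $b \in B$ we have $hd_A(d_A b) = hd_A^2 b = 0$ and $d_A h (d_A b) = d_A(hd_A b) = d_A(0) = 0$, so $d_A b \in B$ and we may write $d_B := d_A|_B$. Second, and more delicately, $h|_B = 0$: for $b \in B$ one has $d_A h(b) = 0$, whence $h d_A h(b) = h(0) = 0$, and the hypothesis $hd_A h = -h$ then forces $h(b) = 0$. This is exactly the side condition $h\psi = 0$.

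Next I would check that $\phi$ lands in $B$ and behaves as a retraction. Computing with $d_A^2 = 0$ and $hd_Ah = -h$ gives $hd_A \phi(a) = hd_A(a) + hd_A h d_A(a) = hd_A(a) - hd_A(a) = 0$, and symmetrically $d_A h \phi(a) = 0$, so $\phi(A) \subseteq B$. The identity $\phi\psi = \id_B$ is then immediate from $h|_B = 0$, since for $b \in B$ one has $\phi(b) = b + hd_A(b) + d_A h(b) = b$. The remaining side conditions follow in a line: $\phi h = h + hd_A h + d_A h^2 = h + (-h) + 0 = 0$, and $h^2 = 0$ is given. Finally, $\phi$ is a cochain map because $\psi\phi = \id_A + [h,d_A]$ commutes with $d_A$ (as $[d_A, [h,d_A]] = 0$ follows from $d_A^2 = 0$), and $\psi$ is injective, so $\phi d_A = d_B \phi$.

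I do not anticipate a substantial obstacle. The only subtle observation is that the single algebraic hypothesis $hd_Ah = -h$ simultaneously forces $h$ to vanish on $B$ and makes $\phi$ land in $B$; after that the entire verification is bookkeeping driven by $h^2 = 0$ and $d_A^2 = 0$.
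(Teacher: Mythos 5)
Your proof is correct and follows essentially the same direct-verification route as the paper's Appendix~A. The only differences are minor refinements: you obtain $h\psi=0$ cleanly from the observation that $hd_Ah=-h$ forces $h$ to vanish on $B$ (the paper instead computes $h\psi\phi=0$, which implicitly uses that $\phi$ surjects onto $B$), and you explicitly verify that $\phi$ is a cochain map, a point the paper leaves implicit.
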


Now we are ready to complete the proof of Theorem~\ref{contractionCA}.
\begin{proof}[Proof of Theorem~\ref{contractionCA}]
Define a $C^\bullet(A_E)$-linear map
\begin{equation}\label{Eq:Def of h}
  h := \frac{1}{1+C_\nabla} \circ \rho^{-1} = \sum_{k\geqslant  0}(-1)^k C_\nabla^k \circ \rho^{-1} \colon  C^\bullet_{\operatorname{reg}}(E) \to C^\bullet_{\operatorname{reg}}(E)[-1].
\end{equation}
Observe that for each homogeneous element $x_1[-2] \odot \cdots \odot x_p[-2] \otimes y_1[-1] \odot \cdots \odot y_q [-1] \in C^\bullet_{\operatorname{reg}}(E)$, one has
\[
 (C_\nabla^k \circ \rho^{-1})(x_1[-2] \odot \cdots \odot x_p[-2] \otimes y_1[-1] \odot \cdots \odot y_q [-1]) = 0,
\]
for all $k \geqslant p-1$. Thus, $h$ is well-defined.
We then check that $h$ satisfies all conditions in \eqref{Eq:condition on h} of Lemma~\ref{Lem:how to construct a contraction}.
To see that $h^2 = 0$, we note that for all $k \geqslant  1$ and all $x_1 ,\cdots, x_p  \in \Gamma(F )$, $y_1 , \cdots, y_q  \in \Gamma(F )$,
\begin{align*}
 &\quad  (\rho^{-1}C^k_\nabla)(x_1[-2] \odot \cdots \odot x_p[-2] \otimes y_1[-1] \odot \cdots \odot y_q [-1]) \\
 &= \frac{p+q}{p+q-k}(C^k_\nabla\rho^{-1})(x_1[-2] \odot \cdots \odot x_p[-2] \otimes y_1[-1] \odot \cdots \odot y_q [-1]).
\end{align*}
Thus,  one obtains
\begin{align*}
  &\quad h^2(x_1[-2] \odot \cdots \odot x_p[-2] \otimes y_1[-1] \odot \cdots \odot y_q [-1]) \\
  &= \left(\sum_{k,l \geqslant  0}(-1)^{k+l} C_\nabla^k\rho^{-1}C_\nabla^l\rho^{-1}\right)(x_1[-2] \odot \cdots \odot x_p[-2] \otimes y_1[-1] \odot \cdots \odot y_q [-1]) \\
  &= \left(\sum_{k,l \geqslant  0}(-1)^{k+l}\frac{p+q}{p+q-l}C_\nabla^{k+l} (\rho^{-1})^2\right) (x_1[-2] \odot \cdots \odot x_p[-2] \otimes y_1[-1] \odot \cdots \odot y_q [-1]) \\
  &= 0.
\end{align*}
Here in the last equality we have used the fact that $(\rho^{-1})^2 = 0$ according to Lemma~\ref{Lem:properties of rhoinverse}. Using Equation~\eqref{Eq:rhoinverse and Cnabla}, we obtain
\begin{align*}
  hd_E h &= \sum_{k,l \geqslant  0}(-1)^{k+l}C^k_\nabla \rho^{-1} d_E C^l_\nabla \rho^{-1} \\
  &=  \sum_{k,l \geqslant  0}(-1)^{k+l} C_\nabla^{k+l} \rho^{-1}d_E\rho^{-1} = \sum_{m \geqslant  0} (-1)^{m} mC_\nabla^{m} (-\rho^{-1} - C_\nabla \rho^{-1}) \\
  &= - \sum_{m\geqslant  0}(-1)^m C_\nabla^m \rho^{-1} = -h.
\end{align*}
It is clear that $\ker d_E h \cap \ker hd_E = C^\bullet(A_E)$.   Hence, by Lemma~\ref{Lem:how to construct a contraction}, we obtain a contraction
\[
\begin{tikzcd}
	(C^\bullet_{\operatorname{reg}}(E), d_E) \arrow[loop left, distance=2em, start anchor={[yshift=-1ex]west}, end anchor={[yshift=1ex]west}]{}{h} \arrow[r,yshift = 0.7ex, "\phi"] &   (C^\bullet(A_E), \dCE) \arrow[l,yshift = -0.7ex, "\psi"],
\end{tikzcd}
\]
where $\psi \colon (C^\bullet(A_E), \dCE) \to (C^\bullet_{\operatorname{reg}}(E), d_E)$ is the inclusion, and the projection $\phi$ is given by
\[
 \phi := \id + hd_E + d_E h \colon (C^\bullet_{\operatorname{reg}}(E), d_E) \to (C^\bullet(A_E), d_{\CE}).
\]
Finally, since the inclusion $\psi \colon C^\bullet(A_E) \hookrightarrow C^\bullet_{\operatorname{reg}}(E)$ is   an algebra morphism,  it suffices to show that the obtained contraction satisfies the conditions in Equation~\eqref{Eq: semifull condition of Real}.
In fact, all these conditions can be verified directly from the definition of $h$ in Equation~\eqref{Eq:Def of h} and the properties of $\rho^{-1}$ and $C_\nabla$ in Lemma~\ref{Lem:properties of rhoinverse}. We omit the details of the verifications.
\end{proof}
\begin{Rem}
   The regular-to-naive contraction in Theorem~\ref{contractionCA} does depend a priori on a choice of dissection $\Psi$ of $E$ and a triple $(j, \nabla^F, \nabla^B)$. It is natural to expect that different choices would lead to isomorphic contractions. We can not confirm this fact so far, and would like to leave it as an open problem.
\end{Rem}
As the inclusion $\psi$ in Theorem~\ref{contractionCA} is canonical, we obtain
\begin{cor}
 The regular cohomology $ H^\bullet_{\mathrm{reg}}(E)$ of $E$ and the Chevalley-Eilenberg cohomology $H^\bullet_{\mathrm{CE}}(A_E)$ of the ample Lie algebroid $A_E$ are canonically isomorphic as graded commutative algebras.
 \end{cor}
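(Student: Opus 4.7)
The plan is to extract this corollary directly from Theorem~\ref{contractionCA} with essentially no extra work. A contraction of cochain complexes is in particular a homotopy equivalence: the identities $\phi\psi = \id$ and $\psi\phi - \id = d_E h + h d_E$ from Definition~\ref{Def of semifull algebra contraction} show that $\psi$ and $\phi$ are mutually inverse quasi-isomorphisms. Consequently the induced map $\psi_{\ast} \colon H^\bullet_{\CE}(A_E) \to H^\bullet_{\operatorname{reg}}(E)$ is already an isomorphism of graded vector spaces.

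The next step is to upgrade this to an isomorphism of graded commutative algebras. I would invoke the fact, already recorded at the end of the proof of Theorem~\ref{contractionCA}, that $\psi$ is a morphism of graded algebras: it is the inclusion of the dg subalgebra $\Gamma(\wedge^\bullet \ker\rho) \subset C^\infty(\RR)$, and the exterior product on the former is compatible with the ambient symmetric product on the latter via the pseudo-metric identification $E^\ast \cong E$ and the convention $\wedge^\bullet V^\ast = S^\bullet(V^\ast[-1])$ fixed in Section~\ref{Sec:BasicStuff}. An algebra map that is simultaneously a quasi-isomorphism automatically descends to an isomorphism of graded commutative algebras on cohomology, which gives the stated identification.

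Finally, I would address the word \emph{canonical} in the statement. Although the full contraction datum $(\phi,\psi,h)$ in Theorem~\ref{contractionCA} is assembled from a dissection $\Psi$ and a triple $(j, \nabla^F, \nabla^B)$, the inclusion $\psi$ itself does not depend on any of those choices: the subbundle $\ker\rho \subset E$, the identification $A_E^\ast \cong \ker\rho$, the embedding $\Gamma(\wedge^\bullet \ker\rho) \hookrightarrow C^\infty(\RR)$, and the restriction of $d_E$ to this subspace (which coincides with the Chevalley-Eilenberg differential $d_{\CE}$, as recalled in Section~\ref{Sec:regularcomplex}) are all intrinsic to the regular Courant algebroid. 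Therefore $\psi_{\ast}$ is canonical, even though the particular $\phi$ and $h$ used to exhibit it as an isomorphism are not.

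There is no substantive obstacle in this argument; the proof is essentially a one-line consequence of Theorem~\ref{contractionCA} once one observes that $\psi$ is the canonical algebra inclusion. In particular, neither the semifull property of the contraction nor the explicit formula $h = (1 + C_\nabla)^{-1} \circ \rho^{-1}$ is needed here — these will matter later, for transferring structure along the contraction, but for the cohomology-level algebra isomorphism they are superfluous.
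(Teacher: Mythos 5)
Your proposal is correct and follows the same route as the paper, which derives the corollary in one line from Theorem~\ref{contractionCA} by observing that the inclusion $\psi$ is a canonical algebra morphism and, by the contraction identities, a quasi-isomorphism. Your additional remarks on why $\psi_\ast$ is independent of the choice of dissection and triple, and on the irrelevance of the semifull property at this stage, are accurate elaborations of what the paper leaves implicit.
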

As an application, consider a transitive Courant algebroid $E$, that is, a regular Courant algebroid whose characteristic distribution $F$ coincides with $TM$. In this case, the regular subalgebra of $E$ is indeed the whole Rothstein algebra $C^\infty(\E)$, and the regular cohomology coincides with the standard cohomology. As an immediate consequence of the above corollary, we recover the following result conjectured by Sti\'{e}non and Xu~\cite{SX2008}, and first proved by Ginot and Gr\"{u}tzmann in~\cite{GM2009}.
\begin{cor}[\cites{GM2009, SX2008}]
  The standard cohomology of any transitive Courant algebroid $E$ is isomorphic to the Chevalley-Eilenberg cohomology of its ample Lie algebroid $A_E$, or equivalently, the naive cohomology of $E$.
\end{cor}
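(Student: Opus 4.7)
The plan is to observe that for a transitive Courant algebroid the regular dg algebra degenerates to the entire Rothstein dg algebra, so the preceding corollary immediately gives the desired isomorphism.

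More concretely, suppose $E$ is transitive, so that the characteristic distribution satisfies $F = \rho(E) = TM$. Then $F^\ast = T^\ast M$, and the graded manifold
\[
\RR = F^\ast[2] \oplus E[1] = T^\ast[2]M \oplus E[1] = \E
\]
coincides with the minimal symplectic realization of $E[1]$. In particular, the morphism of graded manifolds $\Phi = (i^\ast \oplus \id_{E[1]}) \colon \E \to \RR$ introduced in Section \ref{Sec:regularcomplex} is the identity, since the inclusion $i \colon F \hookrightarrow TM$ is the identity. Consequently the regular dg algebra $(C^\infty(\RR), d_E)$ coincides with the Rothstein dg algebra $(C^\infty(\E), d_E)$, and the regular cohomology $H^\bullet_{\operatorname{reg}}(E)$ coincides with the standard cohomology $H^\bullet_{\operatorname{st}}(E)$ by definition.

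Applying the previous corollary, we obtain canonical isomorphisms of graded commutative algebras
\[
H^\bullet_{\operatorname{st}}(E) = H^\bullet_{\operatorname{reg}}(E) \cong H^\bullet_{\CE}(A_E) \cong H^\bullet_{\operatorname{naive}}(E),
\]
where the last identification is the one recalled at the end of Section \ref{Sec:regularcomplex} via $A_E^\ast \cong \ker\rho$ and $d_{\CE} = \breve{d}$. There is essentially no obstacle here: the entire substance of the statement has been pushed into Theorem \ref{contractionCA}, and what remains is only to notice that transitivity collapses the two-step construction $\E \to \RR \to A_E[1]$ into a single step, so that the semifull algebra contraction of Theorem \ref{contractionCA} is already a contraction of the full standard complex onto the naive complex.
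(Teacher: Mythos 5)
Your argument is correct and is essentially identical to the paper's own: the paper likewise observes that transitivity ($F=TM$) forces $\RR=\E$, so the regular dg algebra is the whole Rothstein dg algebra and the result follows at once from the preceding corollary (which rests on Theorem~\ref{contractionCA}). Nothing further is needed.
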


As another application, we have the following proposition.
\begin{prop}\label{Prop:phitheta}
  The projection $\phi$ of Theorem~\ref{contractionCA} sends the   generating Hamiltonian function $\Theta = \rhothree +\Cnablathree$ to the negative value of the torsion $	C_{\nabla^E}$ as in Equation \eqref{Eqt:CnablaE}. Therefore, on the cohomology level,  $\phi$ sends the
  class $[\Theta] \in H^3_{\operatorname{reg}}(E)$   to $(-[C]) \in H_{\CE}^3(A_E)$, where $[C]$ is   the degree $3$ characteristic class of the regular Courant algebroid $E$ introduced in~\cite{CSX} (see also Remark \ref{Rem:3charclass}).
\end{prop}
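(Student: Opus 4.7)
The plan is to establish the cochain-level identity $\phi(\Theta) = -C_{\nabla^E}$, after which the cohomological claim will follow immediately from Remark~\ref{Rem:3charclass}.

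Since $\Theta$ satisfies the master equation $\{\Theta,\Theta\}=0$, we have $d_E\Theta = 0$, and the projection formula $\phi = \id + h d_E + d_E h$ of Lemma~\ref{Lem:how to construct a contraction} reduces to $\phi(\Theta) = \Theta + d_E h(\Theta)$. Decompose $\Theta = \rhothree + C_\nabla$: the side condition $h\psi = 0$ gives $h(C_\nabla)=0$, so $h(\Theta) = h(\rhothree)$. Applying the (graded) $C^\bullet(A_E)$-linearity of $\rho^{-1}$ to $\rhothree = \sum_i \xi^i[-1]\cdot x_i[-2]$ and using that $C_\nabla$ annihilates every pure $F[-1]$-element (so all higher terms $C_\nabla^k\rho^{-1}(\rhothree)$ vanish for $k\geq 1$), one obtains $h(\rhothree) = \rho^{-1}(\rhothree) = \sum_i \xi^i[-1]\cdot x_i[-1]$, a well-defined global degree-$2$ cochain.

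The crux is to expand $d_E\bigl(\sum_i \xi^i[-1]\cdot x_i[-1]\bigr)$ via the Leibniz rule, using Lemma~\ref{dEonF12} for each $d_E(x_i[-1])$ and the Chevalley--Eilenberg differential of $A_E = L(\nabla^\G,R)$ (described in Proposition~\ref{Prop:CSX 2.1}) for each $\xi^i[-1]$. Two cancellations occur. First, the $x_i[-2]$-summand of $d_E(x_i[-1])$ globally assembles into $-\rhothree$, cancelling the $\rhothree$ in $\Theta$. Second, both $\sum_i(d_{\CE}\xi^i[-1])\cdot x_i[-1]$ and $-\sum_i \xi^i[-1]\cdot \nabla^F_{\rhothree[2]}x_i[-1]$ produce $F[-1]$-valued terms which, using respectively the $F$-bracket relation in $A_E$ and the torsion-freeness of $\nabla^F$, each reduce to $\tfrac12\sum_{j,k}\xi^j[-1]\wedge\xi^k[-1]\cdot [x_j,x_k][-1]$ up to opposite signs, and therefore cancel each other. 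What survives is $d_E h(\Theta) = -\rhothree - \sum_i \xi^i[-1]\cdot\iota_{x_i}C_\nabla$, hence $\phi(\Theta) = C_\nabla - \sum_i \xi^i[-1]\cdot\iota_{x_i}C_\nabla$.

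To identify this with $-C_{\nabla^E}$, I would use the bigraded decomposition $C_\nabla = \sum_{p+q=3}C_\nabla^{(p,q)}$, where $(p,q)$ counts the numbers of $\G^\ast$- and $F^\ast$-factors, together with the Euler-type identity $\sum_i \xi^i[-1]\wedge\iota_{x_i}C_\nabla^{(p,q)} = q\,C_\nabla^{(p,q)}$, giving $\phi(\Theta)^{(p,q)} = (1-q)\,C_\nabla^{(p,q)}$. Reading off the bigraded components of $C_\nabla$ from~\eqref{Eqt:Cnablaexplict} (namely $C^\G$ in $(3,0)$, zero in $(2,1)$, $g^\G(R(\cdot,\cdot),\cdot)$ in $(1,2)$, and $\tfrac12 H$ in $(0,3)$) and of $C_{\nabla^E}$ from~\eqref{Eqt:CnablaEexplict} ($-C^\G$, $0$, $g^\G(R(\cdot,\cdot),\cdot)$, $H$ in the same bidegrees), one verifies $\phi(\Theta)^{(p,q)} = -C_{\nabla^E}^{(p,q)}$ bidegree-by-bidegree. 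The most delicate step is the sign bookkeeping in the torsion-freeness cancellation of the previous paragraph; once $\phi(\Theta) = -C_{\nabla^E}$ is established at the cochain level, the cohomological conclusion $\phi_\ast[\Theta] = -[C_{\nabla^E}] = -[C]$ in $H^3_{\CE}(A_E)$ follows directly from Remark~\ref{Rem:3charclass}.
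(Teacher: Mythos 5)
Your proof is correct and follows essentially the same route as the paper's: both reduce to computing $\phi(\rhothree)$ via Lemma~\ref{dEonF12} and then matching the outcome against the explicit formulas~\eqref{Eqt:Cnablaexplict} and~\eqref{Eqt:CnablaEexplict}. Your two streamlinings --- using $d_E\Theta=0$ to drop the $hd_E$ term (the paper computes $h(d_E\rhothree))$ explicitly, and it does vanish), and the Euler identity $\sum_i\xi^i[-1]\wedge\iota_{x_i}(-)=q\cdot(-)$ in bidegree $(p,q)$ in place of the paper's direct evaluation of $C_\nabla(\rhothree[2])$ on arguments --- are both valid and only reorganize the same bookkeeping.
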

\begin{proof}
  By Lemma~\ref{Lem:Cnabla}, the torsion $C_\nabla$ lands in $C^3(A_E)$, and thus is invariant by the projection $\phi$, i.e., $\phi(C_\nabla) = C_\nabla$. We now compute $\phi(\rhothree)$.
  Note that $\rhothree \in \Gamma(F^\ast[-1] \otimes F[-2])$  has the expression $  \sum_{i} \xi^i[-1] \otimes x_i[-2]$ if we choose a local frame $\{x_i\}$ of $\Gamma(F)$ with dual frame $\{\xi^i\}$ of $\Gamma(F^\ast)$.
  Then we have
  \begin{align*}
    d_E(h(\rhothree)) &= \sum_i d_E(h(\xi^{i}[-1] \otimes x_i[-2])) = \sum_i d_E(\xi^i[-1] \otimes x_i[-1])\\
    &= \sum_i \big(d_{\CE}(\xi^i[-1]) \otimes x_i[-1] - \xi^{i}[-1] \otimes d_E(x_i[-1])\big)  \qquad \text{by Lemma~\ref{dEonF12}} \\
    &= \sum_i \bigl( d_{\CE}(\xi^i[-1]) \otimes x_i[-1] - \xi^{i}[-1] \otimes (x_i[-2] + \nabla^F_{\rhothree[2]} (x_i[-1]) + {\iota_{x_i} C_\nabla})\bigr),
  \end{align*}
  and
  \begin{align*}
    &\quad h(d_E(\rhothree)) = \sum_i h(d_E(\xi^i[-1] \otimes x_i[-2]))  \\
    &= \sum_i \bigl(-d_{\CE}(\xi^i[-1]) \otimes x_i[-1] + \xi^i[-1] \otimes h(d_E(x_i[-2]))\bigr) \quad \text{by Lemma~\ref{dEonF12}}\\
    &= \sum_i \bigl(-d_{\CE}(\xi^i[-1]) \otimes x_i[-1] + \xi^i[-1] \otimes h(\nabla^F_{\rhothree[2]} (x_i[-2]) + R^\nabla(\rhothree[2], x_i[-2]) - \nabla_{x_i}C_\nabla)\bigr) \\
    &= \sum_i \bigl(-d_{\CE}(\xi^i[-1]) \otimes x_i[-1] + \xi^i[-1] \otimes \nabla^F_{\rhothree[2]}(x_i[-1])\bigr).
  \end{align*}
  Thus, we obtain
  \begin{align*}
    \phi(\rhothree) &= \rhothree + d_E(h(\rhothree)) + h(d_E(\rhothree)) \\
                     &= \rhothree - \sum_i ( \xi^i[-1] \otimes x_i[-2] + \xi^i[-1] \otimes {\iota_{x_i} C_\nabla}) \\
                     &= -\sum_i \xi^i[-1] \otimes {\iota_{x_i} C_\nabla} = -C_\nabla(\rhothree[2]).
  \end{align*}
  The term $C_\nabla(\rhothree[2])\in \Gamma((\wedge^2 F^\ast \otimes \G[-1]) \oplus \wedge^3 F^\ast)\subset C^3(A_E)$ is determined by
  \begin{align*}
  	C_\nabla(\rhothree[2])&(r +x,  s+y,  t +z)=C_\nabla(x,s+y,t+z)+C_\nabla(r+x, y,t+z)+C_\nabla(r+x,s+y, z)\\
  	&= 2\big( {g^{\G}}(R(x,y),t) + {g^{\G}}(R(y,z),r) + {g^{\G}}(R(z,x),s)\big) + \frac{3}{2}H(x,y,z),
  \end{align*}
 according to Equation~\eqref{Eqt:Cnablaexplict}.
Hence, using Equation~\eqref{Eqt:Cnablaexplict} again,  one sees that the $3$-form
  \[
   \phi(\Theta) = \phi(\rhothree +\Cnablathree) = \phi(\rhothree) +\Cnablathree =  - C_\nabla(\rhothree[2])+C_\nabla
  \]
is given explicitly by
  \begin{align*}
  \phi(\Theta)(r +x,  s+y,  t +z) &={C^\G}(r,s,t) - {g^{\G}}(R(x,y), t) - {g^{\G}}(R(y,z), r) - {g^{\G}}(R(z,x), s)-H(x,y,z) \\
  &=- C_{\nabla^E}(r+x, s+y, t+z),
\end{align*}
according to the definition of the degree $3$ characteristic cocycle $C_{\nabla^E}$ of $E$ in Equation \eqref{Eqt:CnablaEexplict}. Consequently, we have $\phi([\Theta])=-[C_{\nabla^E}] = -[C]$.
\end{proof}

\section{The minimal model and the standard cohomology}\label{Sec:2results}
Throughout this section, we continue with the assumption that $(E,g, \rho, \circ)$ is a regular Courant algebroid over $M$, $F=\rho(E) \subseteq TM$ the characteristic distribution, and $\G=\ker\rho/(\ker\rho)^\perp$ the bundle of quadratic Lie algebras.
\subsection{The minimal model of a regular Courant algebroid}
Let us fix a dissection $\Psi$ of $E$ as in Definition \ref{Def:dissection} so that $(E,g) \cong (F^\ast \oplus \G \oplus F, \langle -, - \rangle)$.
We first extend the regular-to-naive contraction in Theorem~\ref{contractionCA} to a semifull algebra contraction of the Rothstein dg algebra $(C^\infty(\E), d_E)$.

As a starting point, we fix a triple $(j,\nabla^F,\nabla^B)$ and the associated metric connection $\nabla$ compatible with $\Psi$ as in Lemma~\ref{Lem:F-connection on E}.
Denote by $(\E = T^\ast[2]M \oplus E[1], \omega_{g,\nabla}, \Theta)$ the corresponding Hamiltonian $2$-algebroid.
Now the  vector space $\Cst^n(E)$ consisting of degree $n$ elements in $C^\infty(\E)$ has a decomposition
\[
\Cst^n(E) = \bigoplus_{k+2l=n}\Cr^k(E) \otimes_{C^\infty(M)} \Gamma(S^l (B[-2])).
\]
The behaviour of the homological vector field $d_E$ on $\Cr^\bullet(E)$ has been explained in the previous section.
To investigate $d_E$ on $\Cst^\bullet(E)$  under the above decomposition, it suffices to consider the restriction of $d_E$ on $\Gamma(\widehat{S}B([-2]) )$.
Recall that the normal bundle $B$ admits a canonical $F$-module structure, known as the Bott-$F$-connection, defined by
\[
  \nabla_x^{\operatorname{Bott}}b := \pr_B[x, j(b)]
\]
for all $x \in \Gamma(F), b \in \Gamma(B)$.
Meanwhile, the ample Lie algebroid $A_E$ of $E$ is now identified with the standard one $L(\nabla^\G, R)$. The Bott $F$-connection $\nabla^{\operatorname{Bott}}$ extends to a flat $A_E$-connection $\widetilde{\nabla}$ on $B$ simply by setting
\[
 \widetilde{\nabla}_{x+r} b = \nabla^{\operatorname{Bott}}_x b,
\]
for all $x \in \Gamma(F), r \in \Gamma(\G)$ and $b \in \Gamma(B)$.
The corresponding Chevalley-Eilenberg differential $d_{\CE}$ is determined by
\[
  d_{\CE}(b) = \sum_i \xi^i[-1] \otimes \nabla_{x_i}^{\operatorname{Bott}}b  \in \Gamma(F^\ast[-1] \otimes B) \subset C^1(A_E; B),
\]
for all $b \in \Gamma(B)$, where $\{x_i\}$ is any local frame of $\Gamma(F)$ and $\{\xi^i\}$ its dual frame.
We will use the same notation for the Chevalley-Eilenberg differential of the $A_E$-module $B[-2]$, i.e., $d_{\CE}(b[-2]) = (d_{\CE}b) [-2]$ for all $b \in \Gamma(B )$.

We then introduce  a $C^\infty(M)$-linear map
\begin{align*}
	d_T &\colon \Gamma(B[-2]) \to C^3(A_E)
\end{align*}
   defined by
 \begin{equation}\label{Eq: Def of dT}
 d_T(b[-2]) := R^\nabla(\rhothree[2], j(b)[-2]) - \nabla_{j(b)} C_\nabla,
 \end{equation}
 where $R^\nabla(\rhothree[2], j(b)[-2]) \in \Gamma(F^\ast[-1] \otimes \wedge^2 \G^\ast) \subset C^3(A_E)$ is given   by
 \begin{equation}\label{Eq:Def of Rrhob}
 	R^\nabla(\rhothree[2], j(b)[-2])(x[1], r[1], s[1]) = {g^{\G}}\big(R^\nabla(x, j(b))r, s\big),
 \end{equation}for all $x \in \Gamma(F), r, s \in \Gamma(\G)$,
 and $\nabla_{j(b)} C_\nabla$ is the covariant derivative of $C_\nabla\in C^3(A_E)$ along $j(b) \in \Gamma(TM)$.

\begin{lem}\label{Lem:dst on B}
	The restriction of $\dst$ on $\Gamma(B[-2])$ is the summation of  $\dstone$ and $\dsttwo$.
\end{lem}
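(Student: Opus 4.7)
The plan is to evaluate $d_{\operatorname{st}}(b[-2]) = \{\Theta, j(b)[-2]\}$ directly from the Rothstein Poisson brackets of Proposition~\ref{Prop:Rothstein algebra} and to match the output term by term against $d_{\operatorname{CE}}(b[-2]) + d_T(b[-2])$. First I identify $b[-2] \in \Gamma(B[-2])$ with $j(b)[-2] \in \Gamma(T[-2]M) \subset C^\infty(\E)$ via the splitting $j$ in the chosen triple, and write the generating Hamiltonian as $\Theta = \widetilde{\rho} + C_\nabla$, with the local expression $\widetilde{\rho} = \sum_i \xi^i[-1] \otimes x_i[-2]$ for a local frame $\{x_i\}$ of $\Gamma(F)$ and its dual $\{\xi^i\}$ of $\Gamma(F^\ast)$.

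Next, I expand $\{\Theta, j(b)[-2]\} = -\{j(b)[-2], \Theta\}$ using the graded skew-symmetry (with $n=2$, giving sign $-1$ since $(|\Theta|-2)(|j(b)[-2]|-2) = 0$) and the Leibniz rule. Combining the fundamental brackets of \eqref{GlobalPoisson} and \eqref{Eq:Poisson   bracket 2} with the formula $\{j(b)[-2], C_\nabla\} = \nabla^B_b C_\nabla$ (derived by Leibniz-extending $\{X[-2], f\} = X(f)$ to the $3$-cochain $C_\nabla$) produces
\[
 d_E(j(b)[-2]) = -\sum_i (\nabla^B_b \xi^i)[-1] \otimes x_i[-2] - \sum_i \xi^i[-1] \otimes [j(b), x_i][-2] + \sum_i \xi^i[-1] \otimes R^\nabla(x_i, j(b)) - \nabla^B_b C_\nabla,
\]
after using $R^\nabla(j(b), x_i) = -R^\nabla(x_i, j(b))$.

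The third step is to decompose $[j(b), x_i]$ via the splitting $TM = F \oplus j(B)$: combining the defining identity $\nabla^B_b x = \pr_F[j(b), x]$ of \eqref{Eq:Def of nablaB} with the Bott connection $\nabla^{\operatorname{Bott}}_x b = \pr_B[x, j(b)]$ yields $[j(b), x_i] = \nabla^B_b x_i - j(\nabla^{\operatorname{Bott}}_{x_i} b)$. Substituting splits the $[j(b), x_i]$-term into a piece cancelling against $\sum_i (\nabla^B_b \xi^i) \otimes x_i$ and a piece producing $d_{\operatorname{CE}}(b[-2])$. The cancellation is the key technical observation: since $\nabla^B_b$ preserves the canonical pairing between $F^\ast$ and $F$, the identity tensor $\sum_i \xi^i \otimes x_i = \mathrm{id}_F$ is parallel, so $\sum_i (\nabla^B_b \xi^i) \otimes x_i + \sum_i \xi^i \otimes \nabla^B_b x_i = 0$. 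The surviving $\sum_i \xi^i[-1] \otimes (\nabla^{\operatorname{Bott}}_{x_i} b)[-2]$ is precisely $d_{\operatorname{CE}}(b[-2])$, by the definition of the flat $A_E$-connection $\widetilde{\nabla}$ on $B$ whose $F$-direction is $\nabla^{\operatorname{Bott}}$ and $\G$-direction is trivial.

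Finally, I must match $\sum_i \xi^i[-1] \otimes R^\nabla(x_i, j(b)) - \nabla^B_b C_\nabla$ with $d_T(b[-2]) = R^\nabla(\widetilde{\rho}[2], j(b)[-2]) - \nabla_{j(b)} C_\nabla$, using that $\nabla^B_b$ agrees with $\nabla_{j(b)}$ on sections of $A_E^\ast$. The block-diagonal structure $\nabla = \nabla^F + \nabla^\G + \nabla^B$ of Lemma~\ref{Lem:F-connection on E} is essential here: the $\G$-block of the curvature contributes the $F^\ast[-1] \otimes \wedge^2 \G^\ast$-element prescribed by Equation~\eqref{Eq:Def of Rrhob}. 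The hard part is showing that the residual $F \wedge F^\ast$ block of $R^\nabla|_{F^\ast \oplus F}$ does not produce a spurious contribution in the target; this requires tracking the metric-compatibility relation between $\nabla|_F$ and $\nabla|_{F^\ast}$ together with the wedge antisymmetries of the summation $\sum_i \xi^i \wedge R^\nabla|_{F^\ast \oplus F}(x_i, j(b))$, and hinges on a careful bookkeeping of the $g$-identification between $E$ and $E^\ast$ inside the Rothstein algebra. The remainder of the argument is a direct comparison of symbols.
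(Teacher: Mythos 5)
Your plan reproduces the paper's own proof essentially step for step: expand $\{\Theta, j(b)[-2]\}$ via the fundamental Rothstein brackets, split $[j(b),x_i]$ through $TM=F\oplus j(B)$ so that the parallelism of $\sum_i\xi^i\otimes x_i$ cancels the connection terms and leaves the Bott/Chevalley--Eilenberg piece, and then check that only the $F^\ast[-1]\otimes\wedge^2\G^\ast$ block of the curvature term survives, the remainder being $d_T(b[-2])$. The one step you defer to ``careful bookkeeping''---the vanishing of the $F\wedge F\wedge F^\ast$ component of $\sum_i\xi^i[-1]\otimes R^\nabla(x_i,j(b))$, i.e.\ Equation~\eqref{Eq:Rxyzeta=0}---is closed in the paper not by metric compatibility and antisymmetry alone but by unwinding $\nabla^B_b x=\pr_F[j(b),x]$ together with the torsion-freeness of $\nabla^F$ until the whole expression collapses to $-\pr_F$ of the Jacobiator of the Lie bracket of vector fields, so be sure to invoke the Jacobi identity there.
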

\begin{proof}
For any $b  \in \Gamma(B )$, we have
\[
  \dst(b[-2]) = \{\rhothree, b[-2]\} + \{C_\nabla, b[-2]\} = - \{b[-2], \rhothree\} - \nabla^B_{j(b)} C_\nabla.
\]
We then work locally.
Since $\rhothree = \sum_i \xi^i[-1] \otimes x_i[-2] \in \Gamma(F^\ast[-1] \otimes F[-2])$, the first term $-\{b[-2], \rhothree\}$ has two components in $\Gamma(F^\ast[-1] \otimes B[-2])$ and $\Gamma(\wedge^3 E^\ast) \cong \Gamma(\wedge^3 E)$, respectively. Denote the component in $\Gamma(F^\ast[-1] \otimes B[-2])$ by $d^1(b[-2])$. Using the expressions of the degree $(-2)$ Poisson bracket in~\eqref{GlobalPoisson} and~\eqref{Eq:Poisson   bracket 2}, we have
\begin{align*}
  d^1(b[-2]) &= -\sum_i (\nabla_{j(b)} \xi^i)[-1] \otimes x_i[-2] - \sum_i \xi^i[-1] \otimes [j(b), x_i][-2] \\
&= \sum_i \xi^i[-1] \otimes (\nabla_b^B x_i - [j(b),x_i])[-2] = \sum_i \xi^i[-1] \otimes (\pr_F[j(b), x_i] - [j(b),x_i])[-2] \\
&= \sum_i \xi^i[-1] \otimes (\pr_B[x_i, j(b)])[-2] = d_{\CE}(b[-2]).
\end{align*}
On the other hand, the component $R^\nabla(\rhothree[2], j(b)[-2])$ of $(-\{b[-2],\rhothree\})$ in $\Gamma(\wedge^3 E^\ast)$ satisfies
\[
  R^\nabla(\rhothree[2], j(b)[-2])(e_1[1],e_2[1], e_3[1]) = \sum \operatorname{cycl}_{123} g\big(R^\nabla(\rho(e_1), j(b))e_2, e_3\big),
\]
for all $e_1,e_2,e_3 \in \Gamma(E)$.
To prove that $R^\nabla(\rhothree[2], j(b)[-2])$ is one and the same as in~\eqref{Eq:Def of Rrhob}, it suffices to show the relation
\begin{equation}\label{Eq:Rxyzeta=0}
 R^\nabla(\rhothree[2], j(b)[-2])(x[1],y[1],\zeta[1]) = 0,
\end{equation}
for all $x,y \in \Gamma(F), \zeta \in \Gamma(F^\ast)$.
This indeed follows from a direct computation:
\begin{align*}
 &\quad  R^\nabla(\rhothree[2], j(b)[-2])(x[1],y[1],\zeta[1]) \\
&= g(R^\nabla(x,j(b))y, \zeta) + g(R^\nabla(\zeta,j(b))x,y) + g(R^\nabla(y,j(b))\zeta, x) \\
&= \frac{1}{2} \left(\langle \zeta \mid \nabla^F_x\nabla^B_{b} y - \nabla^B_{b}\nabla^F_x y - \nabla_{[x, j(b)]}y \rangle + \langle \nabla^F_y\nabla^B_{b} \zeta- \nabla^B_{b} \nabla^F_y \zeta - \nabla_{[y, j(b)]}\zeta \mid x \rangle\right) \\
&=\frac{1}{2} \langle \zeta \mid \nabla^F_x\nabla^B_{b} y - \nabla^F_y\nabla^B_{b} x  - \nabla^B_{b}\nabla^F_x y + \nabla^B_{b}\nabla^F_y x  - \nabla_{[x,j(b)]}y + \nabla_{[y,j(b)]}x \rangle.
\end{align*}
Then, using Equation~\eqref{Eq:Def of nablaB} and the fact that $\nabla^F$ is torsion-free, one has
\begin{align*}
&\quad \nabla^F_x\nabla^B_{b} y - \nabla^F_y\nabla^B_{b} x  - \nabla^B_{b}\nabla^F_x y + \nabla^B_{b}\nabla^F_y x  - \nabla_{[x,j(b)]}y + \nabla_{[y,j(b)]}x \\
&= \nabla^F_x \pr_F[j(b),y] - \nabla^F_y \pr_F[j(b),x] - \pr_F[j(b), [x,y]]-\pr_F[j(\pr_B[x,j(b)]), y] \\
&\quad - \nabla^F_{\pr_F[x,j(b)]}y + \pr_F[j(\pr_B[y,j(b)]), x] + \nabla^F_{\pr_F[y,j(b)]}x \\
&= -[x, \pr_F[y,j(b)]] - [\pr_F[x,j(b)], y]  - \pr_F[j(b), [x,y]] \\
&\quad - \pr_F[j(\pr_B[x,j(b)]), y] + \pr_F[j(\pr_B[y,j(b)]), x] \\
&= -\pr_F([x,[y,j(b)]] +[[x,j(b)],y] + [j(b),[x,y]]) = 0.
\end{align*}
This proves Equation~\eqref{Eq:Rxyzeta=0}.
So we have
\[
d_T(b[-2]) = \dst(b[-2]) - \dstone(b[-2]) = R^\nabla(\rhothree[2], j(b)[-2]) - \nabla_{j(b)} C_\nabla \in C^3(A_E).
\]
\end{proof}
We also denote by $d_{\CE}$ the Chevalley-Eilenberg differential of the $A_E$-module  $\widehat{S}(B[-2])$, i.e.,
\[
d_{\CE} \colon  C^\bullet(A_E; \widehat{S}(B[-2]) \to C^{\bullet+1}(A_E; \widehat{S}(B[-2]).
\]
Meanwhile, the map $d_T$ extends by Leibniz rule to the following one
\begin{align*}
 d_T \colon &\Gamma(S^\diamond (B[-2])) \to  C^3(A_E; S^{\diamond-1} (B[-2])),
\end{align*}
which extends further to a $C^{\bullet}(A_E)$-linear operator
\begin{align*}
d_T \colon &C^{\bullet}(A_E; S^{\diamond} (B[-2])) \to  C^{\bullet+3}(A_E; S^{\diamond-1} (B[-2])).
\end{align*}
In summary, when restricted to the subalgebra $C^\bullet(A_E; \widehat{S}(B[-2]))$ of $C^\infty(\E)$, the differential $d_E$ is exactly $d_{\CE} + d_T$.
In other words, we obtain a dg submanifold
\[
 (\mathcal{M}_E, Q_E) := (B^\ast[2] \oplus A_E[1], d_{\CE} + d_T)
\]
of $(\E = T^\ast[2]M \oplus E[1], d_E)$.

This dg submanifold indeed contains all cohomological data of the regular Courant algebroid $E$ since it is a deformation retract of the dg manifold $(\E, d_E)$. More precisely, we prove
\begin{thm}\label{Thm: semifull contraction for Rothstein algebra}
	The semifull algebra contraction $(\phi,\psi,h)$ in Theorem~\ref{contractionCA} extends to a semifull algebra contraction for the Rothstein dg algebra
	\[	
	\begin{tikzcd}
		(C^\infty(\E) = C_{\mathrm{reg}}^\bullet(E) \otimes_{C^\infty(M)} \Gamma(\widehat{S}(B[-2])),  d_E) \arrow[loop left, distance=2em, start anchor={[yshift=-1ex]west}, end anchor={[yshift=1ex]west}]{}{{\overline{h}}} \arrow[r,yshift = 0.7ex, "{\overline{\phi}}"] &   \big(C^\infty(\mathcal{M}_E) = C^\bullet(A_E; \widehat{S}(B[-2])), Q_E\big), \arrow[l,yshift = -0.7ex, "{\overline{\psi}}"]
	\end{tikzcd}
	\]	
	where the three maps $(\overline{\phi},\overline{\psi},\overline{h})$ are defined by
	\begin{align*}
		{\overline{\phi}} &:= \phi \otimes \id_{\Gamma(\widehat{S}(B[-2]) )} \colon C^\infty(\E) \to  C^\infty(\mathcal{M}_E) , \\
		{\overline{\psi}} &:= \psi \otimes  \id_{\Gamma(\widehat{S}(B[-2]) )} \colon C^\infty(\mathcal{M}_E)  \to  C^\infty(\E) \\
		{\overline{h}} &:= h \otimes  \id_{\Gamma(\widehat{S}(B[-2]) )} \colon C^\infty(\E) \to  C^\infty(\E).
	\end{align*}
\end{thm}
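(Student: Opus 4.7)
The plan is to verify, one axiom at a time, that the triple $(\overline{\phi}, \overline{\psi}, \overline{h})$ inherits the properties of a semifull algebra contraction from the regular-to-naive contraction $(\phi, \psi, h)$ of Theorem~\ref{contractionCA}, making essential use of the tensor-product decompositions
\[
C^\infty(\E) \cong C^\bullet_{\mathrm{reg}}(E) \otimes_{C^\infty(M)} \Gamma(\widehat{S}(B[-2])), \qquad C^\infty(\mathcal{M}_E) \cong C^\bullet(A_E) \otimes_{C^\infty(M)} \Gamma(\widehat{S}(B[-2])).
\]
First I would rewrite $d_E$ as $d_E|_{C^\bullet_{\mathrm{reg}}(E)} \otimes \id + \overline{\psi} \circ Q_E^S$ in the sense of derivations on $C^\infty(\E)$, where $Q_E^S$ encodes the Leibniz-extended action $b[-2] \mapsto d_{\CE}(b[-2]) + d_T(b[-2])$ on the $\Gamma(\widehat{S}(B[-2]))$-factor, taking values in $C^\infty(\mathcal{M}_E)$; this is essentially the content of Lemma~\ref{Lem:dst on B}.

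Next I would dispatch the easy axioms. The side conditions $\overline{\phi}\,\overline{h} = 0$, $\overline{h}\,\overline{\psi} = 0$, $\overline{h}^2 = 0$ follow immediately by tensoring the corresponding identities for $(\phi, \psi, h)$ with the identity on $\Gamma(\widehat{S}(B[-2]))$. Multiplicativity of $\overline{\psi}$ reduces to that of $\psi$, which is just an inclusion of commutative algebras. The four semifull identities of Equation~\eqref{Eq: semifull condition of Real} transfer in the same way: for instance, on pure tensors $r \otimes s$ and $n \otimes t$ one computes $\overline{h}(r \otimes s) \cdot \overline{\psi}(n \otimes t) = \bigl(h(r)\psi(n)\bigr) \otimes (st)$, and applying $\overline{h}$ again yields $h\bigl(h(r)\psi(n)\bigr) \otimes (st) = 0$ thanks to the original semifull condition, and the three remaining identities are verified in the same manner.

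The heart of the proof is the homotopy identity $[\overline{h}, d_E] = \overline{\psi}\,\overline{\phi} - \id$. On a pure tensor $r \otimes s$ I would expand
\[
[\overline{h}, d_E](r\otimes s) = \bigl([h, d_E|_{C^\bullet_{\mathrm{reg}}(E)}](r)\bigr) \otimes s + (-1)^{|r|}\Bigl(\overline{h}\bigl(r \cdot \overline{\psi}Q_E^S(s)\bigr) - h(r) \cdot \overline{\psi}Q_E^S(s)\Bigr).
\]
The first summand equals $(\psi\phi - \id)(r) \otimes s = (\overline{\psi}\,\overline{\phi} - \id)(r \otimes s)$ by the homotopy identity of the original contraction. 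The second summand vanishes precisely because of the semifull condition $h(a\,\psi(x)) = h(a)\,\psi(x)$, which expresses that $h$ is $C^\bullet(A_E)$-linear: writing $\overline{\psi}Q_E^S(s)$ as a sum of elementary tensors $\psi(n_i) \otimes s_i$, one sees that $\overline{h}$ commutes up to sign with multiplication by such elements, so the two terms cancel. A completely parallel calculation using the companion identity $\phi(a\,\psi(x)) = \phi(a)\,x$ will show that $\overline{\phi}$ and $\overline{\psi}$ are cochain maps. The main obstacle I anticipate is purely bookkeeping: carefully tracking how $d_E$ couples the two tensor factors and confirming at each step that $Q_E^S(s)$ lands in the image of $\overline{\psi}$, so that the semifull identities of Theorem~\ref{contractionCA} can be invoked without further hypothesis.
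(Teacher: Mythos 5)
Your proposal is correct and follows essentially the same route as the paper: decompose $d_E$ via Lemma~\ref{Lem:dst on B} into its regular part plus the $C^\bullet(A_E)$-valued operator $\dCE + d_T$ on the $\Gamma(\widehat{S}(B[-2]))$-factor, transfer the side conditions and semifull identities by tensoring, and reduce the homotopy identity to the vanishing of the cross terms, which holds precisely because $h$ is $C^\bullet(A_E)$-linear (the semifull condition $h(a\psi(x)) = h(a)\psi(x)$). This is exactly the computation carried out in the paper's proof.
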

\begin{proof}
	Indeed, by Lemma~\ref{Lem:dst on B} and Theorem~\ref{contractionCA}, both the inclusion ${\overline{\psi}}$ and the projection ${\overline{\phi}}$ are cochain maps.
	To see that $({\overline{\phi}}, {\overline{\psi}}, {\overline{h}})$ is a contraction, it remains to prove the identity
	\[
	{\overline{\psi}} \circ {\overline{\phi}} = \id_{C^\infty(\E)} + [d_E \otimes \id_{\Gamma(\widehat{S}(B[-2]) )} + \id_{C^\bullet_{\operatorname{reg}}(E)} \otimes (\dstone + d_T), {\overline{h}}].
	\]
	In fact, since $\psi\circ\phi=\id_{C_{\mathrm{reg}}^\bullet(E)} + [d_E, h]$ (according to Theorem~\ref{contractionCA}), it amounts to verify that for all $\omega \in C_{\mathrm{reg}}^k(E), b  \in \Gamma(B )$, one has
	\begin{equation}\label{eq111}
		[\id_{C^\bullet_{\operatorname{reg}}(E)}  \otimes \dstone, {\overline{h}}](\omega \otimes b[-2]) := (-1)^{k-1}h(\omega)\cdot \dstone(b[-2]) + (-1)^k{\overline{h}}(\omega \cdot \dstone (b[-2]))=0,
	\end{equation}
	and
	\begin{equation}\label{eq222}
		[\id_{C^\bullet_{\operatorname{reg}}(E)}  \otimes d_T, {\overline{h}}](\omega \otimes b[-2]) := (-1)^{k-1}h(\omega)\cdot d_T(b[-2]) + (-1)^k{\overline{h}}(\omega \cdot d_T(b[-2]))=0.
	\end{equation}
	We observe that
	\begin{align*}
		{\overline{h}}(\omega\cdot \dstone (b[-2])) &=  {\overline{h}}\left(\sum_i (\omega \cdot \xi^i)\otimes (\nabla_{x_i}^{\operatorname{Bott}}b)[-2] \right)\\
		&= \sum_i h(\omega \cdot \xi^i) \otimes (\nabla_{x_i}^{\operatorname{Bott}}b)[-2] \quad \text{since $h$ is $C^\bullet(A_E)$-linear}\\
		&= \sum_i\bigl(h(\omega)\cdot \xi^i\bigr)\otimes (\nabla_{x_i}^{\operatorname{Bott}}b)[-2] \\
		&= h(\omega)\cdot \dstone (b[-2]),
	\end{align*}
	and thus Equation~\eqref{eq111} holds. Equation~\eqref{eq222} can be verified similarly.
	
	Finally, since the tripe $(\phi,\psi, h)$ in Theorem~\ref{contractionCA} is a semifull algebra contraction, it follows directly that $({\overline{\phi}}, {\overline{\psi}}, {\overline{h}})$ is a semifull algebra contraction as well.
\end{proof}
As a consequence, the dg submanifold $(\mathcal{M}_E, Q_E)$ provides a model for the standard cohomology of the regular Courant algebroid $E$.
Although the map $d_T$ in $Q_E$ depends on the choice of dissection $\Psi$ and the triple $(j,\nabla^F,\nabla^B)$), we prove that it is indeed unique in the following sense.
\begin{prop}\label{Lem:Transgression}
The $C^\infty(M)$-linear map $d_T$, viewed as an element in $C^3(A_E; B^\ast[2])$, is a $3$-cocycle of the Lie algebroid $A_E$ valued in $B^\ast[2]$.
Furthermore, the cohomology class $[d_T] \in H_{\CE}^3(A_E; B^\ast[2])$ is canonical, i.e., is independent of the choice of the triple $(j,\nabla^F,\nabla^B)$ and the dissection $\Psi$.
\end{prop}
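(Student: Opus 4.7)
The proof decomposes into two independent parts: the cocycle property and the invariance of the cohomology class. For the cocycle condition, the key observation is that $(\M_E, Q_E = d_{\CE} + d_T)$ is a dg submanifold of the Rothstein dg manifold $(\E, d_E)$, so $Q_E^2 = 0$. I would bi-grade the function algebra $C^\infty(\M_E) = C^\bullet(A_E; \widehat{S}(B[-2]))$ by the polynomial weight in $\widehat{S}(B[-2])$; under this bi-grading $d_{\CE}$ preserves the weight while $d_T$ decreases it by one. Applying $Q_E^2 = 0$ to a weight-one generator $b[-2] \in \Gamma(B[-2])$ and projecting onto the weight-zero output $C^4(A_E)$ kills the $d_{\CE}^2$ term (which vanishes identically) and the $d_T^2$ term (which vanishes because $d_T(b[-2]) \in C^3(A_E)$ has no $B[-2]$ factor), leaving $(d_{\CE} d_T + d_T d_{\CE})(b[-2]) = 0$. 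Under the identification of $d_T$ with an element of $C^3(A_E; B^\ast[2])$, where $B^\ast[2]$ carries the $A_E$-module structure dual to the Bott-extended module on $B$, this is precisely the Chevalley-Eilenberg cocycle condition.

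For canonicity, I treat the change of triple $(j, \nabla^F, \nabla^B)$ and the change of dissection separately. For a change of triple with fixed dissection, the differences $j - j'$, $\nabla^F - (\nabla^F)'$, and $\nabla^B - (\nabla^B)'$ affect both the curvature term $R^\nabla(\rhothree[2], j(b)[-2])$ and the derivative term $\nabla_{j(b)} C_\nabla$ in the defining formula~\eqref{Eq: Def of dT} of $d_T$. Packaging these differences into an explicit $\sigma \in C^2(A_E; B^\ast[2])$ and using the structural relations in Proposition~\ref{Prop:CSX 2.1} together with the formula for $C_\nabla$ in Lemma~\ref{Lem:Cnabla}, I would verify that $d_T - d_T' = d_{\CE}(\sigma)$. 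For a change of dissection $\Psi \to \hat\Psi$, I would invoke Proposition~\ref{Prop: different dissection}: the induced Lie algebroid isomorphism $\delta_L$ of ample Lie algebroids, together with the intrinsic isomorphism on $B = TM/F$, transports one cocycle representative to the other up to a $d_{\CE}$-exact term.

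The main obstacle is the explicit bookkeeping in the change-of-triple step, because $C_\nabla$ and $R^\nabla$ interact through $\nabla^F$, $\nabla^B$, and $\nabla^\G$ in a way that is not immediately transparent from the defining formulas. A more conceptual bypass would be to interpolate between two choices along a one-parameter family and use a transgression argument; most cleanly of all, one could invoke the uniqueness of the minimal model (Proposition~\ref{Cor: uniqueness of minimal model}): any two minimal models of $E$ are related by a dg isomorphism inducing the identity on $A_E$ and $B^\ast[2]$ at the lowest order, and such an isomorphism necessarily transforms the quadratic part $d_T$ of $Q_E$ by a $d_{\CE}$-exact term. Either route yields the canonicity of $[d_T] \in H^3_{\CE}(A_E; B^\ast[2])$.
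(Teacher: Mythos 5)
Your argument for the cocycle property is essentially the paper's: the paper deduces $d_{\CE}(d_T)=0$ from $d_E^2(b[-2])=0$ together with the decomposition $d_E\mid_{\Gamma(B[-2])}=d_{\CE}+d_T$ of Lemma~\ref{Lem:dst on B}, which is exactly your weight-graded reading of $Q_E^2=0$; your observation that $d_T^2(b[-2])=0$ because $d_T(b[-2])$ has polynomial weight zero is the right way to isolate the cross term. This half is fine.

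For canonicity, your primary route (fix the dissection, vary the triple, exhibit an explicit primitive $\sigma\in C^2(A_E;B^\ast[2])$ with $d_T-d_T'=d_{\CE}(\sigma)$; then handle the change of dissection via $\delta_L$ from Proposition~\ref{Prop: different dissection}) is precisely the paper's strategy, but all of the mathematical content lies in actually producing the primitives, which you defer. The paper does this in two separate lemmas (a $2$-cochain $\psi(r,s)(b[-2])=g^\G(\phi(b,r),s)$ built from $\phi=\nabla^B-\nabla^{B\prime}$, and a $2$-cochain $\alpha_1+\alpha_2$ built from $J=j-j'$ using $H$ and $R$), plus a third explicit $\gamma$ for the change of dissection; it also observes that $d_T$ does not depend on $\nabla^F$ at all, so one of the three differences you propose to track is vacuous. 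More seriously, the bypass you describe as the cleanest --- invoking the uniqueness of the minimal model, Proposition~\ref{Cor: uniqueness of minimal model} --- is circular in this development: the proof of that proposition begins by citing Proposition~\ref{Lem:Transgression} to obtain the element $\gamma$ with $\delta_L^\ast(\widehat{d_T})-d_T=d_{\CE}(\gamma)$, so it cannot be used as input here. Nor can you cheaply replace it by abstract uniqueness of homotopy-transferred structures, since the contractions of Theorem~\ref{contractionCA} themselves depend on $(\Psi,j,\nabla^F,\nabla^B)$ and the paper explicitly leaves open whether different choices give isomorphic contractions. Your one-parameter interpolation idea is a legitimate alternative in principle, but as written it is a sketch, not a proof; to make the proposal complete you must carry out the explicit verification that the differences are $d_{\CE}$-exact.
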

The proof of this proposition is given in Appendix~\ref{App: Proof of  trangression}.
As a consequence, we have
\begin{prop}\label{Cor: uniqueness of minimal model}
The dg submanifold $(\mathcal{M}_E, Q_E)$ of $(\E, d_E)$ is unique up to isomorphism. More precisely, given another dissection $\widehat{\Psi}$ and triple $(\widehat{j},\widehat{\nabla}^F, \widehat{\nabla}^B)$, we denote by $\widehat{\nabla}$ the $\widehat{\Psi}$-compatible metric connection on $E$ and by $\widehat{d_T} \in C^3(A_E; B^\ast[2])$ the associated element defined as in~\eqref{Eq: Def of dT}.
Then there exists an isomorphism of dg manifolds
\[
   \Delta \colon (\mathcal{M}_E, Q_E = d_{\CE} + d_T) \to (\mathcal{M}_E, \widehat{Q}_E:= d_{\CE} + \widehat{d_T}).
\]
\end{prop}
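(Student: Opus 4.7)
My plan is to build $\Delta$ as a composition of two dg automorphisms of $\mathcal{M}_E$: a ``linear'' piece coming from the change of ample Lie algebroid data, followed by a ``unipotent'' gauge transformation in the $B^\ast[2]$-direction that absorbs the cohomological difference between the two transgression cocycles.

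For the first piece, Proposition~\ref{Prop: different dissection} supplies a Lie algebroid isomorphism $\delta_L\colon L(\nabla^\G,R)\to L(\hat\nabla^\G,\hat R)$ preserving the common anchor $\mathrm{pr}_F$. Because the $A_E$-module $B=TM/F$ carries the canonical Bott-$F$-action factored through this anchor, $\delta_L$ is automatically compatible with the induced $B^\ast[2]$-module structure, and extends trivially in the $\beta$-direction to a graded automorphism $\delta_L^{\#}$ of $\mathcal{M}_E=A_E[1]\oplus B^\ast[2]$ sending $d_{\CE}$ to $\widehat{d_{\CE}}$. Let $\tilde d_T\in C^3(A_E;B^\ast[2])$ be the cocycle representing the transport of $\widehat{d_T}$ through this automorphism; by construction $\delta_L^{\#}\colon(\mathcal{M}_E,\,d_{\CE}+\tilde d_T)\to(\mathcal{M}_E,\,\widehat{Q}_E)$ is a dg isomorphism, and the problem reduces to producing $\Phi\colon(\mathcal{M}_E,Q_E)\to(\mathcal{M}_E,\,d_{\CE}+\tilde d_T)$.

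For the second piece, Proposition~\ref{Lem:Transgression} ensures $[\tilde d_T]=[d_T]$, so $\tilde d_T-d_T=d_{\CE}\alpha$ for some $\alpha\in C^2(A_E;B^\ast[2])$. I view $\alpha$ as a vertical degree-$0$ vector field $X_\alpha$ on $\mathcal{M}_E$ translating along $B^\ast[2]$, locally of the form $\alpha^p(\psi)\,\partial/\partial\beta^p$ with coefficients depending only on the $A_E[1]$-coordinates. Because of this $\beta$-independence, $X_\alpha$ graded-commutes with itself and with every vertical vector field of the same shape---in particular with $d_T$ and with $X_{d_{\CE}\alpha}$---while a short bracket computation on generators yields $[X_\alpha,d_{\CE}]=X_{d_{\CE}\alpha}$. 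The BCH expansion for the algebra automorphism $\Phi^\ast:=\exp(X_\alpha)$ of $C^\infty(\mathcal{M}_E)$ therefore collapses to a single bracket and gives $\Phi^\ast(d_{\CE}+\tilde d_T)=(d_{\CE}+d_T)\Phi^\ast$, so that $\Delta:=\delta_L^{\#}\circ\Phi$ is the desired dg isomorphism.

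The main technical obstacle is verifying the identity $[X_\alpha,d_{\CE}]=X_{d_{\CE}\alpha}$ with correct signs, which boils down to tracking carefully how $B^\ast[2]$-valued Chevalley--Eilenberg cochains correspond to vertical homological vector fields on $\mathcal{M}_E$. A secondary subtlety is to confirm that the class identification $[\tilde d_T]=[d_T]$ invoked above is precisely the one delivered by Proposition~\ref{Lem:Transgression}; granted the canonicity statement there, this should become transparent once one unpacks the appendix proof against the specific isomorphism $\delta_L$ employed here.
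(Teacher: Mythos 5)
Your proposal follows essentially the same route as the paper: the paper's isomorphism is $\Delta^\ast = e^{\gamma}\circ\delta_L^\ast$, where $\delta_L^\ast$ is the pullback of the Lie algebroid isomorphism from Proposition~\ref{Prop: different dissection} and $e^{\gamma}=\sum_k\frac{1}{k!}(\iota_\gamma)^k$ is the exponential of the inner derivation $\iota_\gamma$ attached to a primitive $\gamma$ of $\delta_L^\ast(\widehat{d_T})-d_T$ supplied by Proposition~\ref{Lem:Transgression} --- precisely your vertical translation $\exp(X_\alpha)$ composed with $\delta_L^{\#}$. The commutation identities you single out, $[X_\alpha,d_{\CE}]=X_{d_{\CE}\alpha}$ and the vanishing of $[X_\alpha,d_T]$ (both derivations are $C^\bullet(A_E)$-linear and kill $C^\bullet(A_E)$, so the latter is immediate on generators), are exactly the ones the paper uses implicitly in checking $\Delta^\ast\circ\widehat{Q}_E=Q_E\circ\Delta^\ast$.
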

\begin{proof}
  Assume that $L = L(\nabla^\G, R)$ and $\widehat{L} = L(\widehat{\nabla}^\G, \widehat{R})$ are the Lie algebroids obtained from the ample Lie algebroid $A_E$ of $E$ via the dissections $\Psi$ and $\widehat{\Psi}$ respectively as in Proposition~\ref{Prop:CSX 2.1}. Then the algebra $C^\infty(\mathcal{M}_E)$ of functions of $\mathcal{M}_E$ is identified as $C^\bullet(L; \widehat{S}(B[-2]))$ and $C^\bullet(\widehat{L}; \widehat{S}(B[-2]))$, respectively.

  Assume further that $\delta_L \colon L \to \widehat{L}$ is the isomorphism of Lie algebroids induced by the change of dissection $\delta = \widehat{\Psi}^{-1} \circ \Psi$ as in Proposition~\ref{Prop: different dissection}. This isomorphism extends obviously to an isomorphism of dg manifolds from $(B^\ast[2] \oplus L[1] ,d_{\CE})$ to $(B^\ast[2] \oplus \widehat{L}[1], d_{\CE})$.
  Denote the associated morphism of algebras of functions by
  \[
   \delta_L^\ast \colon C^\bullet(\widehat{L};\widehat{S}(B[-2])) \to C^\bullet(L;\widehat{S}(B[-2]))
  \]
  --- it is obviously an isomorphism of $C^\infty(M)$-algebras.
  By Proposition~\ref{Lem:Transgression}, there exists an element $\gamma \in C^2(L; B^\ast[2]) \cong C^2(A_E; B^\ast[2])$ such that $\delta_L^\ast(\widehat{d_T}) - d_T = d_{\CE}(\gamma)$.
  Note that the element $\gamma$ induces a degree $0$ inner derivation of the algebra $C^\infty(\M_E) \cong C^\bullet(L; \widehat{S}(B[-2]))$
  \[
   \iota_\gamma \colon C^\bullet(L; S^{\diamond+1}(B[-2])) \to C^{\bullet+2}(L; S^\diamond(B[-2])).
  \]
  This inner derivation determines an automorphism of $C^\infty(M)$-algebras
  \[
  e^\gamma := \sum_{k=0}^{\infty} \frac{1}{k!}(\iota_\gamma)^k \colon C^\bullet(L; \widehat{S}(B[-2])) \to  C^\bullet(L; \widehat{S}(B[-2])).
  \]
  Consider the following isomorphism of $C^\infty(M)$-algebras
  \begin{equation}\label{Eq: Def of Thetaast}
   \Delta^\ast:= e^{\gamma} \circ \delta_L^\ast \colon C^\bullet(\widehat{L}; \widehat{S}(B[-2])) \to  C^\bullet(L; \widehat{S}(B[-2])).
  \end{equation}
  Since
  \begin{align*}
    \Delta^\ast \circ \widehat{Q}_E &= (e^\gamma \circ \delta^\ast) \circ (d_{\CE} + \widehat{d_T}) = e^\gamma \circ (\delta^\ast \circ d_{\CE} + \delta^\ast \circ \widehat{d_T}) \\
    &= e^\gamma \circ (d_{\CE} \circ \delta^\ast) + e^\gamma \circ (d_T \circ \delta^\ast + d_{\CE}(\gamma) \circ \delta^\ast) \\
    &= (d_{\CE} \circ e^\gamma - d_{\CE}(e^\gamma)) \circ \delta^\ast + d_T \circ (e^{\gamma} \circ \delta^\ast) + d_{\CE}(\gamma) \circ (e^\gamma \circ \delta^\ast) \\
    &= (d_{\CE} + d_T) \circ (e^\gamma \circ \delta^\ast) = Q_E \circ \Delta^\ast,
  \end{align*}
  it follows that
  \[
  \Delta = (\id_M, \Delta^\ast) \colon (\M_E \cong B^\ast[2] \oplus L[1], Q_E) \to (\M_E \cong B^\ast[2] \oplus \hat{L}[1], \widehat{Q}_E)
  \]
  is an isomorphism of dg manifolds as desired.
\end{proof}

Moreover, $(\mathcal{M}_E, Q_E)$ corresponds to an $L_\infty$-algebroid structure on the graded vector bundle $B^\ast[1] \oplus A_E$ over $M$ (cf.~\cite{Roy0}), which is minimal in the sense that its unary bracket vanishes (cf.~\cite{JRSW}).
Hence, we call $(\mathcal{M}_E, Q_E)$ the \textbf{minimal model} of the regular Courant algebroid $E$.

Note that the minimal symplectic realization $\E$ of $E[1]$ together with the Hamiltonian vector field $X_{\Theta}$ is a symplectic dg manifold of degree $2$. In contrast, our minimal model $\mathcal{M}_E = B^\ast[2] \oplus A_E[1]$ of $E$ admits a \textit{$2$-shifted derived Poisson manifold} structure in the sense of \cite{BCSX} by the second author with Bandiera, Sti\'{e}non and Xu. This is indeed an application of Theorem \ref{Thm: semifull contraction for Rothstein algebra}.
\begin{prop}\label{prop: 2-shifted derived Poisson}
   The algebra $C^\infty(\mathcal{M}_E) = C^\bullet(A_E; \widehat{S}(B[-2]) )$ of functions on $\mathcal{M}_E$ admits a degree $(-2)$  derived Poisson algebra structure $\{\lambda_k\}_{k\geqslant 1}$  where
  \begin{compactenum}
    \item the differential $\lambda_1 = Q_E$;
    \item the binary bracket $\lambda_2$ is determined by
    \begin{align*}
      \lambda_2(\xi[-1],\eta[-1]) &= g^\G(i^\ast\xi,i^\ast\eta), \\
     \lambda_2(b_1[-2], \xi[-1]) &= (\nabla^B_{b_1} \xi)[-1], \\
     \lambda_2(b_1[-2],b_2[-2]) &= \pr_B([j(b_1),j(b_2)])[-2] - C_\nabla(\pr_F[j(b_1),j(b_2)]) + R_\G^\nabla(j(b_1), j(b_2)),
    \end{align*}
    for all $b_1 , b_2  \in \Gamma(B )$ and $\xi ,\eta  \in \Gamma(A_E^\ast )$, where $i^\ast$ is the dual of the inclusion  $i \colon \G \hookrightarrow A_E$, and $R^{\nabla}_\G$ is the curvature of the linear connection $\nabla$ on $\G$;
    \item the ternary bracket $\lambda_3$ is determined by
    \[
       \lambda_3(b_1[-2], b_2[-2], \xi[-1]) = -g\big(\pr_F([j(b_1),j(b_2)]), \xi\big).
    \]
   \item the quaternary and all higher brackets vanish: $\lambda_k = 0$ for all $k \geqslant 4$.
  \end{compactenum}
\end{prop}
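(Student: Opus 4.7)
The plan is to realize $\{\lambda_k\}_{k=1}^{3}$ as the homotopy transfer of the degree $(-2)$ Poisson bracket $\{-,-\}$ on the Rothstein dg algebra $(C^\infty(\E),d_E)$ along the semifull algebra contraction $(\overline{\phi},\overline{\psi},\overline{h})$ of Theorem~\ref{Thm: semifull contraction for Rothstein algebra}. Transferring a shifted Poisson structure along a semifull algebra contraction is precisely the framework established in~\cite{BCSX}, and it endows $C^\infty(\M_E) = C^\bullet(A_E;\widehat{S}(B[-2]))$ with a degree $(-2)$ derived Poisson (i.e.\ $P_\infty$) structure whose brackets are sums over rooted binary trees with leaves decorated by $\overline{\psi}$, internal vertices by $\{-,-\}$, internal edges by $\overline{h}$, and root by $\overline{\phi}$, followed by the appropriate (anti)symmetrization. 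In particular, $\lambda_1 = \overline{\phi}\circ d_E\circ\overline{\psi} = Q_E$ is immediate from the construction of $\overline{\phi}$ and $\overline{\psi}$.

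To check the $\lambda_2$ formulas I would evaluate $\lambda_2(a,b) = \overline{\phi}\{\overline{\psi}(a),\overline{\psi}(b)\}$ on generators, using the embeddings $\overline{\psi}(b[-2]) = j(b)[-2]$ (via the splitting $j$ of~\eqref{Eq:SES of VB}) and $\overline{\psi}(\xi[-1]) = \xi[-1]$ for $\xi\in\Gamma(A_E^\ast)\cong\Gamma(F^\ast\oplus\G)\subset\Gamma(\ker\rho)$. The first two identities then follow instantly from~\eqref{GlobalPoisson}--\eqref{Poisson bracket on E[1]}, since the outputs already lie in $C^\infty(\M_E)$ on which $\overline{\phi}$ is the identity. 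For the third, one splits $\{j(b_1)[-2],j(b_2)[-2]\} = [j(b_1),j(b_2)][-2] + R^\nabla(j(b_1),j(b_2))$ according to $TM = F\oplus j(B)$ and the decomposition $E = F^\ast\oplus\G\oplus F$, then pushes the $F$-part of the Lie bracket through $\overline{\phi}$ using the computational identity $\phi(X[-2]) = -\iota_X C_\nabla$ for $X\in\Gamma(F)$ extracted from the proof of Proposition~\ref{Prop:phitheta}; the non-$\G$ parts of the curvature are disposed of by the easy observation that $\phi(Y[-1]) = 0$ for $Y\in\Gamma(F)$, which is a direct consequence of Lemma~\ref{dEonF12} and~\eqref{Eq:Def of h}.

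For the trinary bracket the tree formula collapses to the (anti)symmetrization of $\overline{\phi}\{\overline{h}\{\overline{\psi}(-),\overline{\psi}(-)\},\overline{\psi}(-)\}$. A direct inspection of the Rothstein brackets shows that, among pairs of generators, $\overline{h}$ is nontrivial only on $\{j(b_1)[-2],j(b_2)[-2]\}$, because by~\eqref{Eq:Def of h} the operator $\rho^{-1}$ is nonzero only on elements carrying an $F[-2]$-factor, which arises solely from the $F$-component of the Lie bracket. One then obtains $\overline{h}\{j(b_1)[-2],j(b_2)[-2]\} = -\pr_F[j(b_1),j(b_2)][-1]\in\Gamma(F[-1])$, and its bracket with $\overline{\psi}(\xi[-1]) = \xi$ equals the scalar $-g(\pr_F[j(b_1),j(b_2)],\xi)\in C^\infty(M)$ by~\eqref{Eq:Poisson bracket 2}; since $\overline{\phi}$ is the identity on $C^\infty(M)$, the stated formula follows.

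The main obstacle will be showing that $\lambda_k = 0$ for every $k\geq 4$, so that the derived Poisson structure genuinely truncates at arity $3$. The strategy is an inductive vanishing argument: the only output of a Poisson bracket among generators on which $\overline{h}$ is nonzero is $-\pr_F[j(b_1),j(b_2)][-1]$, which lies in $\Gamma(F[-1])$, where $\rho^{-1}$ vanishes identically; any further Poisson bracket of this element with a generator produces an output in $C^\infty(M)$, in $\Gamma(F[-1])$, or in $\Gamma(A_E^\ast[-1])$, each of which is again annihilated by $\overline{h}$. Combined with the side conditions $\overline{\phi}\,\overline{h} = 0$, $\overline{h}\,\overline{\psi} = 0$, $\overline{h}^2 = 0$, the semifull cancellations~\eqref{Eq: semifull condition of Real}, and the triviality of $\overline{h}$ on the $\widehat{S}(B[-2])$-tensor factor, this forces every tree with two or more internal $\overline{h}$-edges to contribute zero, hence $\lambda_k = 0$ for $k\geq 4$. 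The multiderivation property of each $\lambda_k$ with respect to the commutative product, together with the $L_\infty$-type higher Jacobi identities, is then automatic from Definition~\ref{Def of semifull algebra contraction} and the general transfer theory of~\cite{BCSX}.
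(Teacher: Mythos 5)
Your proposal is correct and follows essentially the same route as the paper: both apply the homotopy transfer theorem of~\cite{BCSX} to the semifull algebra contraction $(\overline{\phi},\overline{\psi},\overline{h})$ of Theorem~\ref{Thm: semifull contraction for Rothstein algebra} and then evaluate the resulting tree formulas on generators, using that $\overline{h}$ of a bracket of generators is nonzero only for $\{j(b_1)[-2],j(b_2)[-2]\}$, where it yields $-\pr_F[j(b_1),j(b_2)][-1]\in\Gamma(F[-1])$, on which $\overline{h}$ vanishes again. Your inductive phrasing of the vanishing of $\lambda_k$ for $k\geqslant 4$ is a slightly more systematic version of the paper's explicit check at $k=4$ followed by ``similar reasons'' for $k\geqslant 5$, but the underlying mechanism is identical.
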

\begin{proof}
  Note that $(C^\infty(\E), d_E, \{-,-\})$ is a dg Poisson algebra of degree $(-2)$. Applying the homotopy transfer theorem~\cite{BCSX}*{Theorem 2.16}  to the semifull algebra contraction $(\overline{\phi}, \overline{\psi}, \overline{h})$ in Theorem~\ref{Thm: semifull contraction for Rothstein algebra}, we obtain a degree $(-2)$-shifted derived Poisson algebra structure on $C^\infty(\mathcal{M}_E)$  with the unary bracket $\lambda_1 = Q_E=d_{\CE}^B + d_T$  and the higher brackets $\{\lambda_k\}_{k\geqslant  2}$  defined   as follows:
  \begin{itemize}
    \item[$k=2\colon$] The binary bracket $\lambda_2$ is, by definition,
    \[
       \lambda_2(\alpha, \beta) := \overline{\phi}(\{\overline{\psi}(\alpha), \overline{\psi}(\beta)\}),\quad \forall \alpha,\beta \in C^\bullet(A_E; \widehat{S}(B[-2]) ).
    \]
    More precisely, we consider the situation on generating elements. Using the explicit expressions of
the degree $2$ Poisson bracket in Equations~\eqref{GlobalPoisson} and \eqref{Eq:Poisson   bracket 2}, we have
     \begin{align*}
      \lambda_2(\xi[-1],\eta[-1]) &= \overline{\phi}(\{\overline{\psi}(\xi[-1]), \overline{\psi}(\eta[-1])\}) = \phi(\{\xi[-1],\eta[-1]\}) = g(\xi,\eta)=g^\G(i^\ast\xi,i^\ast\eta); \\
      \lambda_2(b[-2],\xi[-1]) &= \overline{\phi}(\{\overline{\psi}(b[-2]), \overline{\psi}(\xi[-1])\}) = \phi(\{j(b[-2]), \xi[-1]\})  = (\nabla^B_b \xi)[-1],
    \end{align*}
    for all $b \in \Gamma(B), \xi, \eta \in \Gamma(A_E^\ast)$; and
    \begin{align*}
      &\lambda_2(b_1[-2], b_2[-2]) = \overline{\phi}(\{\overline{\psi}(b_1[-2]), \overline{\psi}(b_2[-2])\}) = \overline{\phi}(\{j(b_1[-2]), j(b_2[-2])\}) \\
      &= \overline{\phi}\big([j(b_1),j(b_2)][-2] + R^\nabla(j(b_1), j(b_2))\big) \qquad\quad \text{by Definition of $\overline{\phi}$} \\
      &= \pr_B[j(b_1), j(b_2)][-2] + \phi\big(\pr_F[j(b_1), j(b_2)][-2] + R^\nabla(j(b_1), j(b_2))\big) \quad \text{by Lemma~\ref{dEonF12}} \\
      &= \pr_B[j(b_1), j(b_2)][-2] - C_\nabla(\pr_F[j(b_1),j(b_2)]) + R^{\nabla}_\G(j(b_1),j(b_2)).
    \end{align*}
     \item[$k=3\colon$] The ternary bracket $\lambda_3$ is given by
 \[
    \lambda_3(\alpha_1,\alpha_2,\alpha_3) = \sum_{\sigma \in \operatorname{sh}(2,1)} \pm \overline{\phi}\big(\{\overline{h}(\{\overline{\psi}(\alpha_{\sigma(1)}), \overline{\psi}(\alpha_{\sigma(2)})\}), \overline{\psi}(\alpha_{\sigma(3)})\}\big),
 \]
 for all $\alpha_i \in C^\bullet(A_E; \widehat{S}(B[-2]) ), 1 \leqslant  i \leqslant  3$, where $\operatorname{sh}(2,1)$ denotes the set of all $(2,1)$-shuffles, and $\pm$ is some proper Koszul sign.
 Since
 \begin{align*}
   \overline{h}(\{\overline{\psi}(b[-2]), \overline{\psi}(\xi[-1])\}) &= 0, & \overline{h}(\{\overline{\psi}(\xi[-1]), \overline{\psi}(\eta[-1])\}) &= 0,
 \end{align*}
 for all $\xi,\eta \in \Gamma(A^\ast_E)$ and $b  \in \Gamma(B )$,  it follows that the only nonzero trinary bracket on generating elements is given by
  \begin{align*}
    &\lambda_3(b_1[-2],b_2[-2],\xi[-1]) = \overline{\phi}\big(\{{\overline{h}}(\{j(b_1[-2]),j(b_2[-2])\}), \psi(\xi[-1])\}\big)  \quad \text{by Eq.~\eqref{Eq:Poisson bracket 2}}\\
    &= \overline{\phi}\big(\{{\overline{h}}(\pr_B[j(b_1),j(b_2)][-2] + \pr_F[j(b_1),j(b_2)][-2] + R^\nabla(j(b_1), j(b_2))), \psi(\xi[-1])\}\big) \\
    &= -\overline{\phi}\big(\{\pr_F[j(b_1),j(b_2)][-1], \psi(\xi[-1]) \} \big) \qquad \text{by Eqs.~\eqref{GlobalPoisson} and~\eqref{Eq:Poisson bracket 2}}\\
    &= -\overline{\phi}\big(g(\pr_F[j(b_1),j(b_2)], \xi[-1])\big) =  -g(\pr_F[j(b_1),j(b_2)], \xi ),
  \end{align*}
  for all $b_1, b_2 \in \Gamma(B)$ and $\xi \in \Gamma(A_E^\ast)$.
  \item[$k=4\colon$] The quaternary bracket $\lambda_4$ is given by
  \begin{align*}
    \lambda_4(\alpha_1,\alpha_2,\alpha_3,\alpha_4) &= \sum_{\sigma \in \operatorname{sh}(2,2)} \pm \overline{\phi}\big(\{{\overline{h}}(\{\overline{\psi}(\alpha_{\sigma(1)}), \overline{\psi}(\alpha_{\sigma(2)})\}), \overline{h}(\{\overline{\psi}(\alpha_{\sigma(3)}), \overline{\psi}(\alpha_{\sigma(4)})\})\}\big) \\
    &\quad + \sum_{\tau \in \operatorname{sh}(2,1,1)} \pm \overline{\phi}\big(\{{\overline{h}}(\{\overline{h}(\{\overline{\psi}(\alpha_{\sigma(1)}), \overline{\psi}(\alpha_{\sigma(2)})\}), \overline{\psi}(\alpha_{\sigma(3)})\}), \overline{\psi}(\alpha_{\sigma(4)})\}\big),
  \end{align*}
  for all $\alpha_i \in C^\bullet(A_E; \widehat{S}(B[-2]) ), 1 \leqslant  i \leqslant  4$. It is clear that both terms vanish when acting on generating elements of the algebra $C^\bullet(A_E; \widehat{S}(B[-2]) )$. Thus, we have $\lambda_4 = 0$.
  \item[$k\geqslant 5 \colon$] All higher brackets $\{\lambda_k\}_{k\geqslant  4}$ vanish for similar reasons.
  \end{itemize}
\end{proof}
\begin{Rem}
  Although the structure maps $\{\lambda_k\}_{k=1}^3$ on $C^\infty(\M_E)$ as above depends a priori on a choice of dissection $\Psi$ of $E$ and a triple $(j,\nabla^F, \nabla^B)$, it is natural to expect that different choices would lead to isomorphic derived Poisson algebra structures of degree $(-2)$. More precisely, under assumptions in Proposition~\ref{Cor: uniqueness of minimal model}, let $(\M_E, \{\widehat{\lambda}_k\}_{k \geqslant 1})$ be the degree $(-2)$ shifted derived Poisson algebra obtained from another dissection $\hat{\Psi}$ and another triple $(\hat{j}, \hat{\nabla}^F, \hat{\nabla}^B)$. We conjecture that there exists an isomorphism of derived Poisson algebras of degree $(-2)$ from $(\M_E, \{\lambda_k\}{k \geqslant 1})$ to $(\M_E, \{\widehat{\lambda}_k\}_{k \geqslant 1})$, whose first map is given by the isomorphism $\Delta^\ast$ in~\eqref{Eq: Def of Thetaast}.
\end{Rem}

Finally, we give a geometric interpretation of the cohomology class $[d_T] \in H^3_{\CE}(A_E; B^\ast[2])$.
Given a leaf $L$ of the characteristic distribution $F \subseteq TM$ and a dissection $\Psi$ of $E$, according to Proposition~\ref{Prop:CSX 2.1}, the structure of the restricted Courant algebroid $E\!\!\mid_L$, which is obviously transitive, is completely determined by the bundle $(\G, [-,-]^\G, g^\G)$ of quadratic Lie algebras and the maps $(R, H,\nabla^\G)$ as in Equations~\eqref{Eq:F-conn on G},\eqref{Eq:FFtoG},  and~\eqref{Eq:FFtoFast}.
In fact, the cohomology class $[d_T]$ is the obstruction to local trivialization of $E$ around each leaf $L$ in the following sense:
\begin{thm}\label{Thm: geometric meaning of dT}
  The cohomology class $[d_T] \in H^3_{\CE}(A_E; B^\ast[2])$ vanishes if and only if, near each leaf $L$ of the characteristic distribution $F$, there exists a tubular neighbourhood $\widetilde{L} \cong L \times N$ of $L$ in $M$ such that the Courant algebroid $E\!\!\mid_{\widetilde{L}}$ is isomorphic to the cross product of $E\!\!\mid_L$ and the zero Courant algebroid over $N$. In other words, $E\!\!\mid_{\widetilde{L}}$ is completely determined by the transitive Courant algebroid $E\!\!\mid_L$.
\end{thm}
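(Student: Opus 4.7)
The plan is to establish the two implications separately, using the uniqueness of the minimal model (Proposition~\ref{Cor: uniqueness of minimal model}) as the main tool for the nontrivial direction.

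For the easy direction, suppose $E\!\!\mid_{\widetilde{L}} \cong E\!\!\mid_L \boxtimes 0_N$ in a tubular neighborhood $\widetilde{L} \cong L \times N$. I would choose a dissection $\Psi$ pulled back from a dissection of $E\!\!\mid_L$, together with a triple $(j, \nabla^F, \nabla^B)$ in which $j \colon B \to T\widetilde{L}$ is the inclusion of the $TN$-factor, $\nabla^F$ and $\nabla^\G$ are pullbacks from $L$, and $\nabla^B$ acts as zero on pulled-back sections. Under this adapted choice, the bundle $(\G, [-,-]^\G, g^\G)$ and all structure maps $\nabla^\G, R, H$ are $\pi_1$-pullbacks from $L$, so the mixed curvature $R^\nabla(x, j(b))$ vanishes for $x \in \Gamma(F), b \in \Gamma(B)$, and $\nabla_{j(b)} C_\nabla = 0$ since $C_\nabla$ is a $\pi_1$-pullback while $\nabla_{j(b)}$ differentiates purely in the $N$-direction. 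Therefore the representative $d_T$ in~\eqref{Eq: Def of dT} is identically zero, so $[d_T] = 0$.

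For the converse, suppose $[d_T] = 0$. I would fix a tubular neighborhood $\widetilde{L} \cong L \times N$ (via the exponential map along any transversal to the leaf $L$) and an arbitrary initial dissection $\Psi_0$ and triple $(j_0, \nabla^F_0, \nabla^B_0)$, yielding a representative $d_{T,0}$ which, by hypothesis, is $d_{\mathrm{CE}}$-exact: $d_{T,0} = d_{\mathrm{CE}}(\gamma_0)$ for some $\gamma_0 \in C^2(A_E; B^\ast[2])$. The crucial step will be to promote this cohomological triviality to an on-the-nose vanishing by realizing $\gamma_0$ as a concrete change of dissection and triple. Decomposing $C^2(A_E; B^\ast[2]) \cong \Gamma(B^\ast \otimes \wedge^2 A_E^\ast)$ via $A_E \cong F \oplus \G$, I would match each summand to a geometric gauge freedom: the $\wedge^2 \G^\ast$-component is absorbed by modifying $\nabla^B$, the mixed $F^\ast \otimes \G^\ast$-component by adjusting $j$ together with $\nabla^B$, and the $\wedge^2 F^\ast$-component by changes of dissection of type $(\tau, \varphi, \beta)$ as in Proposition~\ref{Prop: different dissection} combined with reparametrizations of $\nabla^F$. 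The formula $\Delta^\ast = e^\gamma \circ \delta_L^\ast$ from the proof of Proposition~\ref{Cor: uniqueness of minimal model} serves as a template for how such geometric changes shift $d_T$ cohomologically. The outcome will be a new pair $(\widehat{\Psi}, \widehat{j}, \widehat{\nabla}^F, \widehat{\nabla}^B)$ with $\widehat{d_T} \equiv 0$.

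With $\widehat{d_T} = 0$, the two terms in~\eqref{Eq: Def of dT} vanish independently. The vanishing of the mixed curvature $R^{\widehat{\nabla}}(x, \widehat{j}(b))$ on $\G$ implies that $\widehat{\nabla}^B$ commutes with $\widehat{\nabla}^\G$, so parallel transport of $\widehat{\nabla}^B$ in $N$-directions provides a canonical trivialization $\G\!\!\mid_{\widetilde{L}} \cong \pi_1^\ast(\G\!\!\mid_L)$ as bundles of quadratic Lie algebras. The vanishing of $\nabla_{\widehat{j}(b)} C_{\widehat{\nabla}}$, combined with Lemma~\ref{Lem:Cnabla} unpacked fiberwise, forces the structure maps $(\widehat{\nabla}^\G, \widehat{R}, \widehat{H})$ to be invariant along $N$-directions and hence also $\pi_1$-pullbacks. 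Reading this off in the adapted dissection $\widehat{\Psi}$ assembles into the desired isomorphism $E\!\!\mid_{\widetilde{L}} \cong E\!\!\mid_L \boxtimes 0_N$. The main obstacle will be the matching step above: verifying that every cohomological trivializer $\gamma_0$ is actually achievable by a geometric gauge transformation, rather than merely suggested by an infinitesimal dimension count. Making this explicit in the nonlinear setting will likely require either an inductive scheme on orders in transverse coordinates to $L$, or a careful iterative use of the explicit exponential formula from Proposition~\ref{Cor: uniqueness of minimal model}.
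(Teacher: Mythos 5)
Your overall strategy is the same as the paper's: both directions reduce to the equivalence between $[d_T]=0$ and the existence, for a given dissection, of a pair $(j,\nabla^B)$ making the representative $d_T$ of~\eqref{Eq: Def of dT} vanish identically, followed by a transverse parallel-transport argument. Your remark that the two terms of~\eqref{Eq: Def of dT} must vanish separately is correct and is exactly how the paper proceeds: by Lemma~\ref{Lem:Cnabla} the term $\nabla_{j(b)}C_\nabla$ lies in $\wedge^3\G^\ast\oplus(\wedge^2F^\ast\otimes\G^\ast)\oplus\wedge^3F^\ast$, while $R^\nabla(\rhothree[2],j(b)[-2])$ lies in the complementary summand $F^\ast\otimes\wedge^2\G^\ast$, and the paper records the componentwise vanishing as four explicit conditions on the extended $B$-connection.

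There are, however, two places where your proposal falls short of a proof. First, in the trivialization step the paper parallel-transports the \emph{whole} bundle $E\cong F^\ast\oplus\G\oplus F$ using the extended metric $B$-connection of~\eqref{Eq: extended B-connection} (which includes the Bott-type connection $\nabla^B_b x=\pr_F[j(b),x]$ on $F$ and its dual on $F^\ast$), checks that this transport is an isometry compatible with the anchor, and then uses the four vanishing conditions to see that it preserves the Dorfman bracket. Transporting only $\G$, as you propose, leaves unaddressed the identification of $F$ and $F^\ast$ along the $N$-directions, which is needed before $R$ and $H$ can be declared pullbacks from $L$. Second, and more seriously, your ``matching step'' is where the real content lies, and the mechanism you sketch does not work as stated: by the proof of Proposition~\ref{Lem:Transgression}, the cochain $d_T$ is \emph{independent} of $\nabla^F$, so ``reparametrizations of $\nabla^F$'' absorb nothing; moreover the shifts produced by changing $j$ and by changing the dissection are of the constrained forms $\alpha_1(b)=-H(J(b),\cdot,\cdot)$, $\alpha_2(b)=g^\G(R(\cdot,J(b)),\cdot)$, and $\gamma$ built from $\nabla^B\varphi$ and $\nabla^B\beta$, so surjectivity onto all trivializers $\gamma_0\in C^2(A_E;B^\ast[2])$ is not a matter of matching direct summands. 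To be fair, the paper itself introduces the equivalence ``$[d_T]=0$ iff some $(j,\nabla^B)$ kills the representative'' with a bare ``note that'', so you have correctly located the crux; but your proposal neither closes this gap nor identifies the correct gauge parameters, and the explicit formula $\Delta^\ast=e^\gamma\circ\delta_L^\ast$ from Proposition~\ref{Cor: uniqueness of minimal model} will not supply them, since it only realizes the constrained shifts just described.
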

\begin{proof}
  First of all, note that the cohomology class $[d_T] \in H^3_{\CE}(A_E; B^\ast[2])$ vanishes if and only if given any dissection $\Psi$ of $E$, there exist a splitting $j$ of the short exact sequence~\eqref{Eq:SES of VB} and metric $B$-connection $\nabla^B$ on $\G$ such that the associated extended metric $B$-connection $\nabla^B$ on $E$ as in Equation~\eqref{Eq: extended B-connection} satisfies the following conditions:
  \begin{compactenum}
    \item The Cartan $3$-form $C^\G$ of the bundle $\G$ of quadratic Lie algebras is parallel along the subbundle $j(B) \subseteq TM$, i.e., $\nabla^B(C^\G) = 0$;
    \item $H \in \Gamma(\wedge^3 F^\ast)$ is parallel along the subbundle $j(B)$ in the following sense:
        \[
          j(b)H(x,y,z) = H(\nabla^B_b x,y,z) + H(x, \nabla^B_b y,z) + H(x, y, \nabla^B_b z),
        \]
        for all $b \in \Gamma(B)$ and $x,y,z \in \Gamma(F)$;
    \item $g^\G(R(-,-), -) \in \Gamma(\wedge^2 F^\ast \otimes \G^\ast)$ is parallel along the subbundle $j(B)$, i.e.,
    \[
    j(b)g^\G(R(x,y), r) = g^\G(R(\nabla^B_b x,y), r) + g^\G(R(x, \nabla^B_b y), r) + g^\G(R(x,y), \nabla^B_b r),
     \]
     for all $b \in \Gamma(B)$, $x,y \in \Gamma(F)$, and $r \in \Gamma(\G)$;
    \item the $(1,1)$-curvature of the linear connection on $\G$ vanishes, i.e.,
    \[
       R^{\nabla}(x,b)r := \nabla_x^\G \nabla_b^B r - \nabla^B_b \nabla^\G_x r - \nabla^\G_{\pr_F([x,j(b)])} r - \nabla^B_{\pr_B([x,j(b)])} r = 0,
    \]
    for all $x \in \Gamma(F), b \in \Gamma(B)$ and $r \in \Gamma(\G)$.
  \end{compactenum}
   Let $\widetilde{L} \subset M$ be a tubular neighbourhood of the leaf $L$ such that for any point $q \in \widetilde{L}$, there exists a point $p \in L$ and a smooth curve $\gamma(t) \colon [t_0, t_1] \to \widetilde{L}$ satisfying $\gamma(t_0) = q, \gamma(t_1) = p$ and $\gamma^\prime(t) = j(b)(\gamma(t))$ for some $b \in \Gamma(B)$.

  Given such a metric $B$-connection $\nabla^B$ on $E$ as above, consider the parallel transport $P_{t_0t_1}^\gamma \colon E_{\gamma(t_0)} \to E_{\gamma(t_1)}$ between the fibres of $E$ along $\gamma(t)$. It is clear that $P_{t_0t_1}^\gamma$ is compatible with the anchor map $\rho = \pr_F$, i.e.,
  \[
     P_{t_0t_1}^\gamma(\rho(\xi_0+r_0+x_0)) = \rho(P_{t_0t_1}^\gamma(\xi_0+r_0+x_0)).
  \]
  Meanwhile, since $\nabla^B$ is compatible with the pseudo-metric on $E$, it follows that $P_{t_0t_1}^\gamma$ is an isometry.
  Finally, using the above conditions $(1)-(4)$ for $\nabla^B$ and the explicit expressions for the Dorfman bracket $\circ$ on $E \cong F^\ast \oplus \G \oplus F$ in Proposition~\ref{Prop:CSX 2.1}, one can show that
  \[
    P_{t_0t_1}^\gamma( (\xi_0+r_0+x_0) \circ (\xi_0^\prime + r_0^\prime + x_0^\prime)) = P_{t_0t_1}^\gamma(\xi_0 +r_0+x_0) \circ P_{t_0t_1}^\gamma(\xi_0^\prime + r_0^\prime + x_0^\prime),
  \]
  for all vectors $\xi_0+r_0+x_0, \xi_0^\prime+r_0^\prime+x_0^\prime \in E_{\gamma(t_0)}$.
  Hence, the restriction $E\!\!\mid_{\widetilde{L}}$ of $E$ on this tubular neighbourhood $\widetilde{L}$ is completely determined by its restriction $E\!\!\mid_L$ on the leaf $L$. This completes the proof.
\end{proof}
\begin{Rem}
The geometric meaning of $[d_T] $ is reminiscent of the characteristic class $[\omega]$ of a regular Lie algebroid introduced by Gracia-Saz and Mehta~\cite{GsM}.
In fact, when we consider the regular Courant algebroid $E = A \oplus A^\ast$ arising from a regular Lie algebroid $A$,  the cohomology class $[d_T]$ of $E$ specializes to the cohomology class $[\omega]$ of $A$ (see Lemma~\ref{Lem:transgression of A}).
Thus, the cohomology class $[d_T]$ of regular Courant algebroids could be viewed as the generalization of the class $[\omega]$ of regular Lie algebroids.
\end{Rem}

\subsection{The standard cohomology}
We now compute the cohomology of the minimal model $\mathcal{M}_E$, which according to Theorem~\ref{Thm: semifull contraction for Rothstein algebra}, is isomorphic to the standard cohomology of $E$.
For this, we introduce a non-positively filtered and decreasing filtration of the algebra $C^\infty(\mathcal{M}_E)$:
\begin{equation}\label{Eqt:Filtration}
C^\infty(\mathcal{M}_E) \supset \cdots  \supset F^{p} C^\infty(\mathcal{M}_E) \supset F^{p+1}C^\infty(\mathcal{M}_E)  \supset \cdots  \supset F^{0}C^\infty(\mathcal{M}_E) =C_{\CE}^\bullet(A_E),
\end{equation}
where  for each \textit{non-positive} integer $p$, we set
\[
(F^{p}C^\infty(\mathcal{M}_E))^\bullet  := \bigoplus_{k+2l = \bullet, 0\leqslant l \leqslant  -p}C^k(A_E; S^{l}(B[-2])) = \bigoplus_{k+2l = \bullet, 0\leqslant l \leqslant  -p}\Gamma(\wedge^kA^\ast_E \otimes S^{l}(B[-2])).
\]
Since for any fixed integer $n \geqslant  0$, the filtration \eqref{Eqt:Filtration} stabilizes to a sequence of finite terms:
\[
C^\infty(\mathcal{M}_E)^n = F^{-\lfloor n/2\rfloor} C^\infty(\mathcal{M}_E)^n \supset \cdots \supset F^{0}C^\infty(\mathcal{M}_E)^n =  C^n(A_E),
\]
it follows that the filtration $\{F^{\bullet} C^\infty(\M_E)\}$ as defined above is bounded. Therefore, the corresponding spectral sequence converges to the cohomology of the minimal model $\mathcal{M}_E$, or
the standard cohomology of $E$. We analyze this spectral sequence  from its $0$-th sheet $(E_0,d_0)$:
\[
E^{p,q}_0:=F^{p}C^\infty(\mathcal{M}_E)^{p+q} /F^{p+1} C^\infty(\mathcal{M}_E)^{p+q}= C^{q+3p}(A_E ; S^{-p} (B[-2])),
\]
which lands in the second quadrant for the indices $(p,q)$ with $p \leqslant  0$ and $0\leqslant q+3p\leqslant  \mathrm{rank}(A_E)$, and the $0$-differential is given by
\[
d^{p,q}_0= \dstone \colon E^{p,q}_0 = C^{q+3p}(A_E; S^{-p} (B[-2])) \to C^{q+3p+1}(A_E; S^{-p} (B[-2])) = E^{p,q+1}_0\,.
\]
By taking the  cohomology of $d_0$, we see that the bi-graded vector space of the first sheet is given by
\[
E^{p,q}_1 = H^{q+3p}_{\CE}\bigl(A_E; S^{-p}(B[-2])\bigr).
\]
By Lemma~\ref{Lem:dst on B} and Proposition~\ref{Lem:Transgression}, the differential $d_1$ of the first sheet, which we call the \textbf{transgression map}, is specified by the characteristic class $[d_T] \in H^3_{\CE}(A_E; B^\ast[2])$,
\[
d^{p,q}_1= [d_T] \colon E^{p,q}_{1} =  H^{q+3p}_{\CE}(A_E; S^{-p}(B[-2])) \to  H^{q+3p+3}_{\CE}(A_E; S^{-p-1}(B[-2])) =  E^{p+1,q}_{1}.
\]
\begin{prop}
  If the cohomology class $[d_T] \in H^3_{\CE}(A_E; B^\ast[2])$ of $E$ vanishes, then the standard cohomology of $E$ is isomorphic to the Chevalley-Eilenberg cohomology of the $A_E$-module $S(B[-2])$
  \[
  H^n_{\operatorname{st}}(E) \cong \bigoplus_{k+2l = n}H^k_{\CE}(A_E; S^l(B[-2])).
  \]
\end{prop}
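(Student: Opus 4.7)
The plan is to exhibit an explicit isomorphism of commutative dg algebras that gauges away the perturbation $d_T$ whenever it is a coboundary, using precisely the construction from the proof of Proposition~\ref{Cor: uniqueness of minimal model}. Since $[d_T] = 0 \in H^3_{\CE}(A_E; B^\ast[2])$, I would first pick a primitive $\gamma \in C^2(A_E; B^\ast[2])$ with $d_{\CE}(\gamma) = -d_T$. Contraction with $\gamma$ defines a degree $0$ inner derivation
\[
\iota_\gamma \colon C^\bullet(A_E; S^{\diamond+1}(B[-2])) \to C^{\bullet+2}(A_E; S^{\diamond}(B[-2]))
\]
of the graded commutative algebra $C^\infty(\M_E) = C^\bullet(A_E; \widehat{S}(B[-2]))$, and the exponential $e^\gamma := \sum_{k \geqslant 0} \tfrac{1}{k!}\iota_\gamma^k$ is well defined, being locally finite thanks to the filtration by symmetric degree in $B[-2]$; it is therefore an automorphism of $C^\infty(\M_E)$ as a graded commutative algebra.

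Next I would verify the intertwining identity $e^\gamma \circ d_{\CE} = Q_E \circ e^\gamma$ by repeating the calculation inside the proof of Proposition~\ref{Cor: uniqueness of minimal model} in the special case $\delta_L^\ast = \id$, $\widehat{d_T} = 0$. Concretely, the Leibniz rule for $d_{\CE}$ together with the definition of contraction shows $[\iota_\gamma, d_{\CE}] = \iota_{d_{\CE}\gamma} = -d_T$ as operators on $C^\infty(\M_E)$, and a quick check on generators gives $[\iota_\gamma, d_T] = 0$ (both operators strictly lower the symmetric degree in $B[-2]$, while $d_T(b[-2])$ and $\iota_\gamma(b[-2])$ land in the exterior factor $C^\bullet(A_E)$ on which the other operator vanishes). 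The BCH-type expansion $e^\gamma d_{\CE} e^{-\gamma} = d_{\CE} + [\iota_\gamma, d_{\CE}] + \tfrac{1}{2}[\iota_\gamma, [\iota_\gamma, d_{\CE}]] + \cdots$ therefore terminates after two terms and evaluates to $d_{\CE} + d_T = Q_E$. Hence $e^\gamma$ is an isomorphism of commutative dg algebras $(C^\infty(\M_E), d_{\CE}) \xrightarrow{\sim} (C^\infty(\M_E), Q_E)$, and combining with Theorem~\ref{Thm: semifull contraction for Rothstein algebra} yields
\[
H^\bullet_{\operatorname{st}}(E) \cong H^\bullet(C^\infty(\M_E), Q_E) \cong H^\bullet(C^\infty(\M_E), d_{\CE}).
\]

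Finally, $d_{\CE}$ preserves the symmetric degree $l$ in $B[-2]$, so the complex $(C^\infty(\M_E), d_{\CE})$ splits as $\bigoplus_{l \geqslant 0} (C^\bullet(A_E; S^l(B[-2])), d_{\CE})$; taking cohomology in each summand and matching total degrees via $n = k + 2l$ (an element of $C^k(A_E; S^l(B[-2]))$ sits in total degree $k+2l$, and for fixed $n$ only finitely many $l \leqslant n/2$ contribute) gives the stated formula. I expect no serious obstacle: the only real content beyond bookkeeping is the intertwining identity, and its proof is essentially a specialization of the computation already carried out in Proposition~\ref{Cor: uniqueness of minimal model}.
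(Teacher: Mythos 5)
Your argument is correct, but it is not the route the paper takes, and the comparison is worth recording. The paper's proof is two lines and geometric: it invokes Theorem~\ref{Thm: geometric meaning of dT} to assert that when $[d_T]=0$ one may re-choose the splitting $j$ and the metric $B$-connection $\nabla^B$ so that the representative cochain $d_T$ of~\eqref{Eq: Def of dT} is \emph{itself} zero; then $Q_E=d_{\CE}$ on the nose by Lemma~\ref{Lem:dst on B}, and the splitting by symmetric degree in $B[-2]$ gives the formula. You instead keep the given auxiliary data and gauge away the coboundary by the inner automorphism $e^{\iota_\gamma}$, which is precisely the mechanism of Proposition~\ref{Cor: uniqueness of minimal model} specialized to $\delta_L=\id$ and $\widehat{d_T}=0$. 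Your route buys something real: it uses only that $d_T$ is a $d_{\CE}$-coboundary (Proposition~\ref{Lem:Transgression}) and does not rely on the surjectivity claim implicit in Theorem~\ref{Thm: geometric meaning of dT} — namely that every coboundary perturbation of $d_T$ is actually realized by a change of $(j,\nabla^B)$ — so it is more self-contained and purely algebraic; the paper's route, by contrast, produces a genuinely preferred choice of connection data. Your supporting checks (local nilpotency of $\iota_\gamma$ in each total degree because $l\leqslant n/2$, the vanishing of $[\iota_\gamma,d_T]$ on generators, and the degreewise finiteness of the product over $l$) are all right. The one blemish is an internal sign slip: with your normalization $d_{\CE}(\gamma)=-d_T$ and your stated identity $[\iota_\gamma,d_{\CE}]=\iota_{d_{\CE}\gamma}=-d_T$, the conjugation yields $e^{\iota_\gamma}d_{\CE}e^{-\iota_\gamma}=d_{\CE}-d_T$ rather than $d_{\CE}+d_T$; either the commutator carries the opposite sign in the conventions you are using, or you should take $d_{\CE}(\gamma)=d_T$ (equivalently replace $\gamma$ by $-\gamma$). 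This is harmless and does not affect the validity of the proof.
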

\begin{proof}
  By Theorem~\ref{Thm: geometric meaning of dT}, for each fixed dissection of $E$, one can choose a splitting $j$ of the short exact sequence~\eqref{Eq:SES of VB} and a metric $B$-connection $\nabla^B$ on $\G$ such that the linear map $d_T$ defined in~\eqref{Eq: Def of dT} vanishes. In this case, by Lemma~\ref{Lem:dst on B} the standard differential $d_E$ of $E$ becomes the Chevalley-Eilenberg differential $d_{\CE}$ of the $A_E$-module $S(B[-2])$. The rest is clear.
\end{proof}
Now we investigate the second sheet $E_2^{p,q}$ of this spectral sequence.
The bi-graded vector space of the second sheet is given by
\[
E^{p,q}_{2}\cong H^{p+q}\bigl(H_{\CE}^{q+3p}(A_E; S^{-p}(B[-2])), [d_T]\bigr).
\]
The differential on the second sheet $d_2 \colon E_2^{p,q} \to E_2^{p+2,q-1}$ can be described accordingly.
In fact, any element $\overline{\varphi} \in E_2^{p,q}$ is represented by a class $[\varphi] \in H_{\CE}^{q+3p}(A_E; S^{-p}(B[-2]))$ satisfying
\[
[d_T]([\varphi]) = 0 \in H^{q+3p+3}_{\CE}(A_E, S^{-p-1}(B[-2]),
\]
it follows that there exists an element $\psi \in C^{q+3p+2}(A_E; S^{-p-1}(B[-2]))$ such that $d_{\CE}(\psi) = d_T(\varphi)$. Then the element $d_T(\psi)$ is $d_{\CE}$-closed and its cohomology class $[d_T(\psi)]$ is $[d_T]$-closed. Thus,
$d_T(\psi)$ defines an element in $E_2^{p+2,q-1}$, which is exactly the image of $\overline{\varphi}$ under $d_2$, i.e.,
\[
 d_2(\overline{\varphi}) = \overline{d_T(\psi)} \in H^{p+q+1} \bigl(H^{q+3p+5}_{\CE}(A_E; S^{-p-2}(B[-2])), [d_T] \bigr) \cong E_2^{p+2,q-1}.
\]
We summaries these facts into a theorem.
\begin{thm}\label{Thm:regulartostandard}
Let $E$ be a regular Courant algebroid with characteristic distribution $F$,  normal bundle $ B=TM/F$ and ample Lie algebroid $A_E$.
There exists a spectral sequence $(E_r^{p,q},d_r)$ which converges to the standard cohomology of   $E$:
 \[
 	E^{p,q}_0 = C^{q+3p}(A_E ; S^{-p} (B[-2])) \Rightarrow H^{p+q}(\M_\E) \cong H^{p+q}_{\mathrm{st}}(E).
 \]
\end{thm}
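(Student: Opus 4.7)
My plan is to show this is a routine consequence of the machinery already built, once the compatibility of the differential $Q_E = d_{\CE}+d_T$ with the filtration~\eqref{Eqt:Filtration} is verified. The first step is to check that $Q_E$ respects the filtration: since $d_{\CE}$ acts as the Chevalley--Eilenberg differential of the $A_E$-module $\widehat{S}(B[-2])$, it sends $C^k(A_E;S^l(B[-2]))$ into $C^{k+1}(A_E;S^l(B[-2]))$ and therefore preserves the filtration degree, i.e.\ maps $F^p$ into $F^p$. By contrast, the transgression $d_T$, viewed as a $C^\bullet(A_E)$-linear operator, sends $C^k(A_E;S^l(B[-2]))$ into $C^{k+3}(A_E;S^{l-1}(B[-2]))$, so it shifts the filtration by one: $d_T(F^p)\subset F^{p+1}$. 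Hence $Q_E(F^p)\subset F^p$, and we obtain a filtered complex.

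Next I would check boundedness. As already observed in the excerpt, in each total degree $n$ the filtration has only finitely many non-trivial steps, namely $F^{-\lfloor n/2\rfloor}C^\infty(\M_E)^n=C^\infty(\M_E)^n$ and $F^1 C^\infty(\M_E)^n=0$. So the filtration is bounded, and the associated spectral sequence converges strongly to $H^\bullet(C^\infty(\M_E),Q_E)$ by the classical convergence theorem for filtered complexes. Combining with Theorem~\ref{Thm: semifull contraction for Rothstein algebra}, which provides a quasi-isomorphism $(C^\infty(\M_E),Q_E)\simeq (C^\infty(\E),d_E)$, the limit is canonically identified with $H^\bullet_{\mathrm{st}}(E)$.

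The remaining step is to identify the early pages. From $Q_E=d_{\CE}+d_T$ and the filtration shifts computed above, the induced differential on $E_0^{p,q}=\mathrm{gr}^p_F C^\infty(\M_E)^{p+q}=C^{q+3p}(A_E;S^{-p}(B[-2]))$ is the part of $Q_E$ which preserves $p$, namely $d_{\CE}$. Taking cohomology immediately gives the stated $E_1$ page $H^{q+3p}_{\CE}(A_E;S^{-p}(B[-2]))$. The induced differential on $E_1$ is the class of the part of $Q_E$ that increases $p$ by exactly one, i.e.\ $[d_T]$, which is well-defined as a map on Chevalley--Eilenberg cohomology precisely by Proposition~\ref{Lem:Transgression}.

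The only genuine issue is to make sure the filtration indexing and the bidegree conventions match: the standard degree is $p+q=k+2l$ with $k=q+3p$ and $l=-p$, so one must check that $d_{\CE}$ contributes to $d_0$ (bidegree $(0,1)$) and $d_T$ contributes to $d_1$ (bidegree $(1,0)$) in these coordinates. This is a direct verification from the formulas $k\mapsto k+1$, $l\mapsto l$ for $d_{\CE}$ and $k\mapsto k+3$, $l\mapsto l-1$ for $d_T$, translating to $(p,q)\mapsto(p,q+1)$ and $(p,q)\mapsto(p+1,q)$ respectively. With this bookkeeping in place, the spectral sequence and the claimed convergence follow from the standard theorem on bounded filtrations, completing the proof.
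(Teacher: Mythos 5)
Your proposal is correct and follows essentially the same route as the paper: the same filtration~\eqref{Eqt:Filtration}, the same boundedness observation, convergence by the classical theorem for bounded filtered complexes, identification of the abutment via Theorem~\ref{Thm: semifull contraction for Rothstein algebra}, and the identification $d_0=d_{\CE}$, $d_1=[d_T]$ justified by Proposition~\ref{Lem:Transgression}. The only difference is cosmetic: you make explicit the filtration-compatibility check ($d_{\CE}$ preserves $F^p$, $d_T(F^p)\subset F^{p+1}$) and the bidegree bookkeeping, which the paper leaves implicit.
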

Let us call $(E_r^{p,q},d_r)$ the \textbf{Chevalley-Eilenberg-to-standard spectral sequence}.
It is easy to see that $d_r\colon E^{p,q}_r\to E^{p+r,q-r+1}_{r}$ vanishes for every $r \geqslant 2$ by degree reasons when the indices $(p,q)$ are among $(0,0)$, $(0,1)$, $(0,2)$, $(0,3)$, $(-1,3)$ and $(-1,4)$, i.e. cases of $p+q\leqslant 3$. So we draw some  direct corollaries  about the standard cohomology in degrees $\leqslant 3$.
\begin{cor}\label{CorH0H1}\cite{SX2008}
  For any regular Courant algebroid $E$ with ample Lie algebroid $A_E$, we have
  \[
      H^0_{\operatorname{st}}(E) \cong H^0_{\CE}(A_E) \cong H^0_{\operatorname{naive}}(E), \quad \mbox{and} \quad
       H^1_{\operatorname{st}}(E) \cong H^1_{\CE}(A_E) \cong H^1_{\operatorname{naive}}(E).
  \]
\end{cor}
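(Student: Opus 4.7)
The plan is to read off the corollary directly from the Chevalley-Eilenberg-to-standard spectral sequence of Theorem~\ref{Thm:regulartostandard}, by checking that its total degree $0$ and $1$ anti-diagonals consist of a single term and that every differential in or out of that term vanishes. Concretely, I would first enumerate the lattice points $(p,q)$ with $p \leqslant 0$ and $q + 3p \geqslant 0$ lying on the line $p + q = n$ for $n = 0, 1$. Rewriting $q + 3p = n + 2p$, the condition $q + 3p \geqslant 0$ forces $p \geqslant -n/2$, hence $p = 0$ in both cases. So the only nonzero $E_0$-terms at total degree $0$ and $1$ are
\[
 E_0^{0,0} = C^0(A_E) = C^\infty(M), \qquad E_0^{0,1} = C^1(A_E) = \Gamma(A_E^\ast),
\]
and passing to $E_1$ yields $E_1^{0,0} = H^0_{\CE}(A_E)$ and $E_1^{0,1} = H^1_{\CE}(A_E)$.

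Next I would check that the higher differentials cannot affect these two spots. An outgoing arrow $d_r \colon E_r^{0,q_0} \to E_r^{r, q_0 - r + 1}$ with $r \geqslant 1$ has target filtration index $r > 0$, hence lands outside the non-positive filtration and is zero. An incoming arrow $d_r \colon E_r^{-r, q_0 + r - 1} \to E_r^{0, q_0}$ has source bi-degree satisfying $(q_0 + r - 1) + 3(-r) = q_0 - 1 - 2r$; for $q_0 \in \{0,1\}$ and $r \geqslant 1$ this quantity is strictly negative, so the source itself vanishes already on the $E_0$-page. Consequently the spectral sequence degenerates at $E_1$ along these two anti-diagonals, and convergence gives
\[
 H^0_{\mathrm{st}}(E) \cong E_\infty^{0,0} = H^0_{\CE}(A_E), \qquad H^1_{\mathrm{st}}(E) \cong E_\infty^{0,1} = H^1_{\CE}(A_E).
\]

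Finally, the second isomorphism with the naive cohomology is not new: it was recalled in Section~\ref{Sec:regularcomplex} that the identification $A_E^\ast \cong \ker \rho$ induces an isomorphism of cochain complexes $(C^\bullet(A_E), \dCE) \cong (C^\bullet_{\mathrm{naive}}(E), \breve{d})$, which gives $H^i_{\CE}(A_E) \cong H^i_{\mathrm{naive}}(E)$ for all $i$ and in particular for $i = 0, 1$.

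There is essentially no obstacle here beyond careful bookkeeping of the admissible bi-degrees; the corollary is meant as a first sanity check that the Chevalley-Eilenberg-to-standard spectral sequence recovers the low-degree results of Sti\'enon and Xu~\cite{SX2008} without any further input.
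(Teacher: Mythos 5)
Your proposal is correct and follows exactly the route the paper intends: the corollary is read off from the Chevalley--Eilenberg-to-standard spectral sequence of Theorem~\ref{Thm:regulartostandard} by the degree bookkeeping you carry out (only $p=0$ survives on the anti-diagonals $p+q=0,1$, all incoming and outgoing differentials vanish, and the one-step filtration leaves no extension problem), together with the identification $(C^\bullet(A_E),\dCE)\cong(C^\bullet_{\mathrm{naive}}(E),\breve{d})$ recalled in Section~\ref{Sec:regularcomplex}. The paper merely asserts the "degree reasons" without writing them out, so your argument is a faithful filling-in of the same proof.
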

\begin{cor}\label{CorH2}
The standard cohomology in degree $2$, consisting of outer derivations of the algebra $C^\infty(\E)$, is given by
\[
  H^2_{\operatorname{st}}(E) \cong H^2_{\CE}(A_E) \bigoplus \ker\big([d_T] \colon H^0_{\CE}(A_E; B[-2]) \to H^3_{\CE}(A_E)\big).
\]
\end{cor}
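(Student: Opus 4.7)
The plan is to apply the Chevalley-Eilenberg-to-standard spectral sequence of Theorem~\ref{Thm:regulartostandard} in total degree $p+q=2$. First, I would identify which bidegrees contribute. Since $E_0^{p,q}=C^{q+3p}(A_E;S^{-p}(B[-2]))$ vanishes unless $p\leqslant 0$ and $q+3p\geqslant 0$, the equation $p+q=2$ leaves exactly two candidates: $(p,q)=(0,2)$, giving $E_1^{0,2}=H^2_{\CE}(A_E)$, and $(p,q)=(-1,3)$, giving $E_1^{-1,3}=H^0_{\CE}(A_E;S^1(B[-2]))=H^0_{\CE}(A_E;B[-2])$.

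Next, I would analyse the $d_1=[d_T]$ differentials entering and leaving these two bidegrees. The outgoing $d_1$ from $(0,2)$ lands in $E_1^{1,2}=0$, and the incoming one comes from $E_1^{-1,2}=H^{-1}_{\CE}(A_E;B[-2])=0$, so $E_2^{0,2}=H^2_{\CE}(A_E)$. The outgoing $d_1$ from $(-1,3)$ is precisely the map
\[
[d_T]\colon H^0_{\CE}(A_E;B[-2])\to H^3_{\CE}(A_E)=E_1^{0,3},
\]
while the incoming one originates from $E_1^{-2,3}=H^{-3}_{\CE}(A_E;S^2(B[-2]))=0$. Hence $E_2^{-1,3}=\ker[d_T]$. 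By the degree-count already noted in the paper, $d_r$ vanishes on $(0,2)$ and $(-1,3)$ for every $r\geqslant 2$: the target of $d_r$ on $(0,2)$ sits at $p=r>0$, and the target on $(-1,3)$ requires $q+3p=4-2r<0$ for $r\geqslant 3$ (the $r=2$ case lands at $(1,2)$, again outside $p\leqslant 0$). Similarly there are no non-zero incoming higher differentials. Therefore $E_\infty^{0,2}=H^2_{\CE}(A_E)$ and $E_\infty^{-1,3}=\ker[d_T]$.

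Finally, convergence of the spectral sequence yields a bounded filtration of $H^2_{\mathrm{st}}(E)$ whose associated graded pieces are exactly $E_\infty^{0,2}$ and $E_\infty^{-1,3}$; thus there is a short exact sequence
\[
0\to H^2_{\CE}(A_E)\to H^2_{\mathrm{st}}(E)\to \ker[d_T]\to 0.
\]
To conclude the stated direct sum decomposition I would exhibit a canonical splitting: the Chevalley-Eilenberg complex $(C^\bullet(A_E),d_{\CE})$ sits inside $(C^\infty(\M_E),Q_E)$ as the subcomplex $F^0 C^\infty(\M_E)$, which gives a natural map $H^2_{\CE}(A_E)\to H^2_{\mathrm{st}}(E)$ realising the filtration's bottom piece. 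The main (mild) obstacle is precisely this splitting step, since \emph{a priori} one only obtains a filtered isomorphism from the spectral sequence; once the canonical inclusion of the Chevalley-Eilenberg subcomplex is used to split the extension, the corollary follows immediately.
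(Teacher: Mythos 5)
Your argument is correct and is essentially the paper's own proof: the corollary is drawn directly from the Chevalley--Eilenberg-to-standard spectral sequence of Theorem~\ref{Thm:regulartostandard} in total degree $2$, where only $(p,q)=(0,2)$ and $(-1,3)$ contribute, $E_2^{0,2}=H^2_{\CE}(A_E)$, $E_2^{-1,3}=\ker[d_T]$, and all $d_r$ with $r\geqslant 2$ vanish for degree reasons, exactly as you argue. Two trivial quibbles: the outgoing $d_r$ from $(-1,3)$ dies because its target sits at $p=r-1>0$ (your count ``$q+3p=4-2r$'' is an arithmetic slip, though your $p\leqslant 0$ observation already covers it), and the extension $0\to H^2_{\CE}(A_E)\to H^2_{\mathrm{st}}(E)\to\ker[d_T]\to 0$ splits simply because these are real vector spaces --- the inclusion of the subcomplex $F^0C^\infty(\M_E)$ is the first map of that sequence, not a splitting of it, so your last step is unnecessary (and as written does not actually produce a section or retraction).
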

Recall that the space of isomorphism classes of infinitesimal deformations of any regular Courant algebroid $E$ is isomorphic to $H^3_{\operatorname{st}}(E)$ (cf.~\cites{Roytenberg2001, KW}). Thus, we conclude the following fact.
\begin{cor}\label{CorH3}
  The space $H^3_{\operatorname{st}}(E)$ of isomorphism classes of infinitesimal deformations of any regular Courant algebroid $E$ with characteristic distribution $F$ and ample Lie algebroid $A_E$ decomposes as the direct sum
  \[
     H^3_{\operatorname{st}}(E) \cong \frac{H^3_{\CE}(A_E)}{\img  [\Transgression]} \bigoplus  \ker\big([\Transgression] \colon {H^1_{\CE}(A_E; B[-2])} \to H^4_{\CE}(A_E)\big).
\]
\end{cor}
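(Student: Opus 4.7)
The strategy is to read off $H^3_{\mathrm{st}}(E)$ from the Chevalley–Eilenberg-to-standard spectral sequence established in Theorem \ref{Thm:regulartostandard}, which degenerates at the $E_2$ page in total degree $3$ for trivial reasons of bidegree. Recall that $E_0^{p,q}$ is concentrated in the region $p\le 0$ and $q+3p\ge 0$, and that $d_1$ is induced by the transgression $[d_T]$.

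First I would identify the diagonal $p+q=3$ on the $E_1$ page. The candidate bidegrees $(0,3),(-1,4),(-2,5),(-3,6),\ldots$ contribute, respectively,
\[
E_1^{0,3}=H^3_{\CE}(A_E),\qquad E_1^{-1,4}=H^1_{\CE}(A_E;B[-2]),
\]
while $E_1^{-k,3+k}=H^{3-2k}_{\CE}(A_E;S^k(B[-2]))=0$ for all $k\ge 2$, since the cochain degree $3-2k$ is negative. Thus only two columns contribute to total degree $3$.

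Next I would compute the $E_2$ terms at these bidegrees, using $d_1=[d_T]\colon E_1^{p,q}\to E_1^{p+1,q}$. The outgoing differential from $E_1^{0,3}$ targets $E_1^{1,3}=0$, so the kernel is the whole group; the incoming differential has source $E_1^{-1,3}=H^0_{\CE}(A_E;B[-2])$. This yields
\[
E_2^{0,3}\;\cong\;H^3_{\CE}(A_E)\big/\img\!\bigl([d_T]\colon H^0_{\CE}(A_E;B[-2])\to H^3_{\CE}(A_E)\bigr).
\]
Similarly, the incoming differential at $(-1,4)$ has source $E_1^{-2,4}=H^{-2}_{\CE}(A_E;S^2(B[-2]))=0$, giving
\[
E_2^{-1,4}\;\cong\;\ker\!\bigl([d_T]\colon H^1_{\CE}(A_E;B[-2])\to H^4_{\CE}(A_E)\bigr).
\]

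Then I would verify that all higher differentials into and out of these two positions vanish for bidegree reasons. For $r\ge 2$, the outgoing differential $d_r\colon E_r^{0,3}\to E_r^{r,4-r}$ lands in the region $p=r\ge 1$, which is zero; analogously $d_r\colon E_r^{-1,4}\to E_r^{r-1,5-r}$ lands in $p\ge 1$ for $r\ge 2$. For incoming differentials, one checks that the source bidegrees $(-r,r+2)$ and $(-1-r,r+3)$ give cochain degrees $2-2r$ and $-2r$, both negative, hence zero on $E_1$ and therefore on $E_r$. Consequently $E_\infty^{0,3}=E_2^{0,3}$ and $E_\infty^{-1,4}=E_2^{-1,4}$, while all other $E_\infty^{p,3-p}$ vanish.

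Finally I would invoke convergence: the filtration $F^\bullet$ on $C^\infty(\M_E)$ is bounded, so $H^3_{\mathrm{st}}(E)\cong H^3(\M_E)$ inherits a two-step filtration whose associated graded is $E_\infty^{-1,4}\oplus E_\infty^{0,3}$. Over $\R$ (indeed any field), the resulting short exact sequence
\[
0\longrightarrow E_\infty^{-1,4}\longrightarrow H^3_{\mathrm{st}}(E)\longrightarrow E_\infty^{0,3}\longrightarrow 0
\]
splits as vector spaces, producing the claimed direct sum decomposition. The interpretation of $H^3_{\mathrm{st}}(E)$ as the space of isomorphism classes of infinitesimal deformations of $E$ has already been recalled from \cites{Roytenberg2001, KW}, so no extra work is needed there. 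The only subtle point is the bookkeeping of the bidegrees to guarantee that no further differential survives; everything else is pure degree chase.
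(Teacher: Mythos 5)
Your proposal is correct and follows essentially the same route as the paper: the paper derives Corollary~\ref{CorH3} by observing that all differentials $d_r$ for $r\geqslant 2$ touching the total-degree-$3$ diagonal vanish for degree reasons, so $H^3_{\mathrm{st}}(E)$ is read off from $E_2^{0,3}\oplus E_2^{-1,4}$ exactly as you compute. Your write-up merely makes explicit the bidegree bookkeeping (vanishing of $E_1^{-k,3+k}$ for $k\geqslant 2$ and of all higher incoming/outgoing differentials) that the paper summarizes as ``by degree reasons.''
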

More precisely,  any two elements
\[
[\alpha] \in H^3_{\CE}(A_E)/\img [\Transgression] \quad \mbox{and} \quad [\beta] \in \ker \big([\Transgression] \colon {H^1_{\CE}(A_E; B[-2])} \to H^4_{\CE}(A_E)\big)
\]
 give rise to a first order deformation of the generating Hamiltonian function $\Theta$
\[
  \Theta_t := \Theta + t(\alpha + \beta),
\]
since $\{\Theta_t, \Theta_t\} = 0 \; (\!\!\!\mod t^2)$. By Equations~\eqref{Eq:derived bracket 1} and~\eqref{Eq:derived bracket 2}, the associated infinitesimal deformed anchor and Dorfman bracket are represented by
\begin{align*}
  \rho_t(e_1)f &= \{\{\Theta_t, e_1[-1]\}, f\} = \rho(e_1)f + t\{\{\alpha+\beta, e_1[-1]\}, f\} = \rho(e_1)f + t j(\beta[e_1]) f,
  \end{align*}
  and
  \begin{align*}
  &( e_1 \circ_t e_2)[-1] = \{\{\Theta_t, e_1[-1]\}, e_2[-1]\} \\
  & = ( e_1 \circ e_2)[-1] + t\bigl(  \alpha([e_1], [e_2], -) +   \nabla_{j(\beta[e_1])}e_2[-1]  -   \nabla_{j(\beta[e_2])} e_1[-1] +  g(\nabla_{\beta(-)} e_1, e_2)\bigr),
\end{align*}
where    $e_1, e_2 \in \Gamma(E)$,    $[e_1], [e_2] \in A_E \cong E / (\ker\rho)^\perp$,  $f \in C^\infty(M)$.

Finally, $H^4_{\operatorname{st}}(E)$ contains obstructions to recursive constructions of formal deformations of $E$ because the obstruction class $(\!\!\!\mod t^3)$ of the infinitesimal deformation given by $[\alpha]+[\beta]$ is determined by
\begin{align*}
[O(\alpha,\beta)]&:= [\{\alpha+\beta, \alpha+\beta\}]\\ &\in  {H^4_{\CE}(A_E)}/{\img \left([\Transgression]\colon {H^{1}_{\CE}(A_E;   B[-2])}\to H^{4}_{\CE}(A_E) \right) } \\ & \qquad \oplus \ker\left([\Transgression]\colon {H^{2}_{\CE}(A_E;   B[-2])}\to H^{5}_{\CE}(A_E)\right).
\end{align*}
Besides the two direct summands as above, the space $H^4_{\operatorname{st}}(E)$  has an extra summand $\ker(d_2 \colon E_2^{-2,6} \to E_2^{0,5})$ where
\[
E_2^{-2,6}=\ker\left([\Transgression]\colon {H^{0}_{\CE}(A_E; S^2( B[-2]))}\to H^{3}_{\CE}(A_E;   B[-2]) \right),
\]
and
\[
E_2^{0,5}=H^{5}_{\CE}(A_E)/\img \left([\Transgression]\colon {H^{2}_{\CE}(A_E;   B[-2])}\to H^{5}_{\CE}(A_E) \right).
\]
At present, we cannot say at which sheet the spectral sequence $(E_r^{p,q},d_r)$ will degenerate.  However, if it happens at the second sheet, for example when the ample Lie algebroid $A_E$ is of rank $\leqslant 4$, then we get a nice formula describing the standard cohomology
\[
	H_{\operatorname{st}}^n(E) \cong \bigoplus_{k+2l=n} \frac{\ker [\Transgression] \colon H_{\CE}^{k}\big(A_E; S^{l}(B[-2])\big) \to H_{\CE}^{k+3}\big(A_E; S^{l-1}(B[-2])\big)}{\img  [\Transgression] \colon H_{\CE}^{k-3}\big(A_E; S^{l+1}(B[-2])\big) \to H_{\CE}^{k}\big(A_E; S^{l}(B[-2])\big)},
\]
for all $n\geqslant 0$.

\section{Applications}\label{Sec:Applications}
\subsection{Courant algebroids with split base}
Let $M= L \times N$ be a product of smooth manifolds. For each $n \in N$, there is an inclusion map $I_n \colon L \to M$ defined by $I_n(l) = (l,n) \in M$ for all $l \in L$. The differentials $(I_n)_{\ast l} \colon T_l L \to T_{(l,n)}M$ of all these inclusions determine a subbundle of $TM$, denoted by $ I_\ast(TL)$, with $\Gamma(I_\ast(TL)) \cong \Gamma(TL) \otimes_{C^\infty(L)} C^\infty(M)$.
Similarly, the differentials $(J_l)_{\ast n} \colon T_n N \to T_{l,n}M$ of the inclusions $J_l \colon N \to M$ defined by $J_l(n) = (l,n)$ determine a subbundle of $TM$, denoted by $J_\ast(TN)$, with $\Gamma(J_\ast (TN)) \cong \Gamma(TN) \otimes_{C^\infty(N)} C^\infty(M)$.
In other words, we have established two isomorphisms of vector bundles $I_\ast(TL) \cong \pr_L^\ast TL$ and $J_\ast(TN) \cong \pr_N^\ast TN$, where $\pr_L \colon M \to L$ and $\pr_N\colon M \to N$ are projections.
Meanwhile, the tangent bundle $TM$ of the product manifold $M$ is identified with the direct sum $I_\ast (TL) \oplus J_\ast(TN)$ of the two subbundles.

A regular Courant algebroid $(E,g,\rho,\circ)$ is said to be with \textit{split base}, if the base manifold $M$ is a product manifold $L \times N$, and the characteristic distribution of $E$ is the subbundle $F = I_\ast(TL)$ of $TM$.
In this case, Ginot and Gr\"{u}tzmann have introduced in \cite{GM2009} what they call the \textit{naive ideal spectral sequence}. Let us denote it by $(\widetilde{E}_s^{k,l},\tilde{d}_s)$ in order to distinguish from our  Chevalley-Eilenberg-to-standard spectral sequence $({E}_r^{p,q},d_r)$. Next we compare  the two spectral sequences.

Choose a dissection $\Psi$ of $E$ such that $E$ is identified with the standard Courant algebroid $ S(\nabla^\G, R, H)$.
We then fix a triple $(j,\nabla^F,\nabla^B)$ compatible with the split base, that is, $j$ is the splitting such that $j(B)$ is identified with the subbundle $J_\ast(TN) \subseteq TM$, and $\nabla^B$ is the pullback connection of a metric $TN$-connection on $(\G, g^\G)$ along $\pr_N$.
Note that the ample Lie algebroid $A_E$ is identified with the standard one $L(\nabla^\G, R)$ via the chosen dissection.
Then the graded manifold underlying the minimal model $\M_E$ is given by
\[
 A_E[1] \oplus B^\ast[2] \cong \G[1] \oplus F[1] \oplus B^\ast[2] \cong \G[1] \oplus \pr_L^\ast (T[1]L) \oplus \pr_N^\ast (T^\ast[2]N).
\]
The flat Lie algebroid $A_E=L(\nabla^\G, R)$-connection on $B\cong \pr_N^\ast(TN) \cong J_\ast(TN)$ is simply given by
\[
\widetilde{\nabla}_{r+x}(f \otimes v) :=\nabla^{\operatorname{Bott}}_x (f\otimes v) = x(f) \otimes v,
\]
for all $x \in \Gamma(F),  r \in \Gamma(\G), f\in C^\infty(M)$, and $v \in \Gamma(TN)$.
Therefore, the Chevalley-Eilenberg-to-standard spectral sequence starts from
\[
E_0^{p,q}=C^{q+3p}(A_E ; S^{-p} (B[-2]))=\Gamma(\wedge^{3p+q}A_E^*)\otimes_{C^\infty(N)} \Gamma(S^{-p}(T[-2]N)),
\quad d_0=\dCE\otimes 1
\]
and then
\[
E_1^{p,q} \cong H_{\CE}^{q+3p}\big(A_E; S^{-p}(B[-2])\big) \cong H_{\CE}^{q+3p}(A_E) \otimes_{C^\infty(N)} \Gamma(S^{-p} (T[-2]N) ).
\]
The differential $d_1 = [\Transgression] \in H^3_{\CE}(A_E; B^\ast[2]) \cong H^3_{\CE}(A_E; T^\ast[2]N)$ can be viewed as a $C^\infty(N)$-linear map
\[
[d_T] \colon \Gamma(T[-2]N) \to H^3_{\CE}(A_E).
\]
We are thus able to conclude that
\[
E_2^{p,q} \cong \frac{\ker [d_T] \colon H^{q+3p}_{\CE}(A_E) \otimes_{C^\infty(N)} \Gamma(S^{-p}(T[-2]N)) \to H^{q+3p+3}_{\CE}(A_E) \otimes_{C^\infty(N)} \Gamma(S^{-p-1}(T[-2]N))}{\img  [d_T] \colon H^{q+3p-3}_{\CE}(A_E) \otimes_{C^\infty(N)} \Gamma(S^{-p+1}(T[-2]N)) \to H^{q+3p}_{\CE}(A_E) \otimes_{C^\infty(N)} \Gamma(S^{-p}(T[-2]N))} .
\]
We now recall from \cite{GM2009}  the naive ideal spectral sequence $(\widetilde{E}_s^{k,l},\tilde{d}_s)$.  For $k,l\geqslant 0$ where $l$ is an even number, one has
\[
\widetilde{E}_{1}^{k,l}=\Gamma(\wedge^{k}\ker\rho)\otimes_{C^\infty(N)}  \Gamma(S^{\frac{l}{2}}(T[-2]N))= \Gamma(\wedge^{k}A_E^*)\otimes_{C^\infty(N)}  \Gamma(S^{\frac{l}{2}}(T[-2]N));
\]
When $l$ is odd,  $\widetilde{E}_1^{k,l}$ is simply zero. The differential $\tilde{d}_1\colon \widetilde{E}_{1}^{k,l}\to \widetilde{E}_{1}^{k+1,l}$ is identical to $\dCE\otimes 1$.

The spectral sequence continues with $(\widetilde{E}_{2}^{k,l},\tilde{d}_2=0)$ and
\[
  \widetilde{E}_{3}^{k,l}=H_{\CE}^{k}(A_E) \otimes_{C^\infty(N)} \Gamma(S^{{\frac{l}{2}}} (T[-2]N)),
\]
whose differential $\tilde{d}_3$ coincides with our $[d_T]$.

By Theorem \ref{Thm:regulartostandard}, we get the following conclusion.
\begin{prop}\label{Thm of GM2009}
  Let $E$ be a Courant algebroid with split base $M = L \times N$. If the Chevalley-Eilenberg-to-standard spectral sequence  $E^{p,q}_r$ degenerates at its second sheet, or equivalently the naive ideal spectral sequence $\widetilde{E}_{s}^{k,l}$ degenerates  at its fourth sheet, then
\[
H_{\mathrm{st}}^n(E) = \bigoplus_{k+2l = n}\frac{\ker [d_T] \colon H^k_{\CE}(A_E) \otimes_{C^\infty(N)} \Gamma(S^l(T[-2]N)) \to H^{k+3}_{\CE}(A_E) \otimes_{C^\infty(N)} \Gamma(S^{l-1}(T[-2]N))}{\img  [d_T] \colon H^{k-3}_{\CE}(A_E) \otimes_{C^\infty(N)} \Gamma(S^{l+1}(T[-2]N)) \to H^k_{\CE}(A_E) \otimes_{C^\infty(N)} \Gamma(S^l(T[-2]N))}.
\]
\end{prop}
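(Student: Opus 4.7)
The plan is to derive this as a direct consequence of Theorem~\ref{Thm:regulartostandard} once the $E_2$-page is identified with the explicit bigraded vector space appearing in the statement. First I would record that under the splitting $M = L \times N$ and the compatible triple $(j, \nabla^F, \nabla^B)$ chosen at the start of this subsection, the $A_E$-module $B$ is naturally identified with $\mathrm{pr}_N^\ast(TN)$ with its flat, Bott-type $A_E$-connection $\widetilde{\nabla}$ (elements of $\Gamma(\G)$ act trivially, and $x\in \Gamma(F)$ acts by the Lie derivative along the $L$-directions). Therefore, for each $p$, the Chevalley-Eilenberg complex $C^\bullet(A_E; S^{-p}(B[-2]))$ factors as $C^\bullet(A_E) \otimes_{C^\infty(N)} \Gamma(S^{-p}(T[-2]N))$ with differential $d_{\CE} \otimes \mathrm{id}$, and passing to cohomology gives the stated description of $E_1^{p,q}$. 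The computation of $E_2^{p,q}$ performed immediately before the statement then yields
\[
E_2^{p,q}\;\cong\;\frac{\ker[d_T]}{\img[d_T]}
\]
at the appropriate bidegrees.

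Assuming degeneration at the second sheet, $E_2^{p,q} = E_\infty^{p,q}$, and by Theorem~\ref{Thm:regulartostandard} the Chevalley-Eilenberg-to-standard spectral sequence converges to $H^{p+q}_{\operatorname{st}}(E)$. Since the filtration~\eqref{Eqt:Filtration} is bounded in every fixed total degree $n$, the induced filtration on $H^n_{\operatorname{st}}(E)$ has associated graded $\bigoplus_{p+q=n}E_2^{p,q}$. Working over $\mathbb{R}$, this bounded filtration splits (non-canonically) as a direct sum of vector spaces, which produces the identity of the statement once we re-index via $k = q+3p$ and $l = -p$, so that $n = k+2l = p+q$.

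Finally I would verify the claimed equivalence of the two degeneration conditions. The substitution $k = q+3p$, $l = -2p$ is a bijection between bigraded lattices preserving total degree (since $k+l = p+q$), under which our $E_1^{p,q}$ corresponds to $\widetilde{E}_3^{k,l}$, and under which our $d_1 = [d_T]$ of bidegree $(1,0)$ corresponds to $\widetilde{d}_3$ of bidegree $(3,-2)$. Thus $E_2^{p,q}$ matches $\widetilde{E}_4^{k,l}$, and degeneration of $(E_r,d_r)$ from the second sheet onward is equivalent to degeneration of $(\widetilde{E}_s,\widetilde{d}_s)$ from the fourth sheet onward.

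The only genuine subtlety is ensuring that the formula is really a direct sum and not merely a description of an associated graded; this is automatic here because every short exact sequence of $\mathbb{R}$-vector spaces splits, so the bounded filtration on $H^n_{\operatorname{st}}(E)$ admits a (non-canonical) splitting. Everything else reduces to transcribing the $E_2$-description already established, combined with the explicit dictionary between the two spectral sequences.
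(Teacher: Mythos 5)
Your proposal is correct and follows essentially the same route as the paper: the identification of $E_1^{p,q}$ and $E_2^{p,q}$ via the split-base module structure on $B\cong \mathrm{pr}_N^\ast(TN)$, the appeal to Theorem~\ref{Thm:regulartostandard} for convergence, and the re-indexing dictionary with the naive ideal spectral sequence are exactly the steps the paper carries out in the discussion preceding the proposition. The only material you add beyond the paper's (essentially proof-free) presentation is the explicit remark that the bounded filtration on $H^n_{\mathrm{st}}(E)$ splits over $\mathbb{R}$, which is a correct and worthwhile clarification of why the associated graded can be written as a direct sum.
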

\begin{Rem}
 It is claimed   by \cite{GM2009}*{Theorem 4.12} that the naive ideal spectral sequence always degenerates at its fourth page. However, the proof of this conclusion is imperfect. In fact, to compute $\widetilde{E}_4^{k,l}$, they used the following equation which is \textit{not} generally true:
 \begin{eqnarray*}
 	 && \frac{\ker [d_T] \colon H^{k}_{\CE}(A_E) \otimes_{C^\infty(N)} \Gamma(S^{\frac{l}{2}}(T[-2]N)) \to H^{k+3}_{\CE}(A_E) \otimes_{C^\infty(N)} \Gamma(S^{\frac{l}{2}-1}(T[-2]N))}{\img  [d_T] \colon H^{k-3}_{\CE}(A_E) \otimes_{C^\infty(N)} \Gamma(S^{\frac{l}{2}+1}(T[-2]N)) \to H^{k}_{\CE}(A_E) \otimes_{C^\infty(N)} \Gamma(S^{\frac{l}{2}}(T[-2]N))} \\
 	&=&  \frac{H^{k}_{\CE}(A_E)}{ (\img  [d_T])\wedge  H^{k-3}_{\CE}(A_E) }  \otimes_{C^\infty(N)} \Gamma(S^{\frac{l}{2}}(\ker [d_T])).
 \end{eqnarray*}
\end{Rem}

\subsection{Generalized exact Courant algebroids}
Let $E$ be a generalized exact Courant algebroid, i.e., a regular Courant algebroid whose bundle of quadratic Lie algebras $\G=\ker\rho/(\ker\rho)^\perp$ has rank zero. So by choosing a dissection of $E$, we have an isomorphism $E \cong F^\ast \oplus F$. Let us fix a splitting $j \colon B \to TM$ and a torsion-free $F$-connection $\nabla^F$ on $F$.

It is clear that the first sheet of the Chevalley-Eilenberg-to-standard spectral sequence for the minimal model $(\M_E = B^\ast[2] \oplus F[1], Q_E = d_{\CE} + d_T)$ of $E$ is simply the Chevalley-Eilenberg cohomology of the Bott representation of $F$ on $S(B[-2])$:
\[
E_1^{p,q} \cong H^{q+3p}_{\CE}(F, S^{-p}(B[-2])).
\]
In this case, the associated torsion part $C_\nabla$ is identified with the \u{S}evera $3$-form $C = \frac{1}{2}H \in \Gamma(\wedge^3 F^\ast)$ defined by
\[
  C(x,y,z) = \frac{1}{2}H(x,y,z) = \frac{1}{2} \langle \pr_{F^\ast}(x \circ y) \mid z \rangle = g(x \circ y, z)
\]
for all $x,y,z \in \Gamma(F)$; and the curvature part vanishes: $R^\nabla(\tilde{\rho[2]}, j(b)[-2]) = 0$ for all $b \in B = TM/F$.
Thus, the cohomology class $[d_T] \in H^3_{\CE}(A_E; B^\ast[2]) \cong H^3_{\CE}(F; B^\ast[2])$ is given by the covariant derivative $\nabla[C]$ of $[C]$ along the subbundle $j(B) \subseteq TM$, i.e.,
\begin{align*}
 &\quad \nabla[C](b[-2])(x,y,z) = j(b) C(x, y, z) - C(\nabla^B_b x, y, z) - C(x, \nabla^B_b y, z) -C(x, y, \nabla^B_b z)  \\
&= j(b) g(x \circ y, z) - g(\pr_F[j(b),x] \circ y, z) - g(x \circ \pr_F[j(b),y], z) - g(x \circ y, \pr_F[j(b),z]),
\end{align*}
for all $b  \in \Gamma(B )$ and all $x,y,z \in \Gamma(F)$.
Applying Theorem~\ref{Thm:regulartostandard}, we obtain the following fact.
\begin{thm}\label{Thm:application to GEC}
  Let $E$ be a generalized exact Courant algebroid with characteristic distribution $F \subset TM$ and \u{S}evera class $[C] \in H^3_{\CE}(F)$.
  If the corresponding Chevalley-Eilenberg-to-standard spectral sequence degenerates at its second sheet, then the standard cohomology $H_{\mathrm{st}}^\bullet(E)$ of $E$ is given by
\[
 H^n_{\mathrm{st}}(E) \cong  \bigoplus_{k+2l=n} \frac{\ker  \nabla [C] \colon H_{\CE}^{k}\big(F; S^{l}(B[-2])\big) \to H_{\CE}^{k+3}\big(F; S^{l-1}(B[-2])\big)}{\img   \nabla[C] \colon H_{\CE}^{k-3}\big(F; S^{l+1}(B[-2])\big) \to H_{\CE}^{k}\big(F; S^{l}(B[-2])\big)}.
\]
In particular, we have
\[
 H^3_{\operatorname{st}}(E) \cong \frac{H^3_{\CE}(F)}{\img  \nabla[C]} \bigoplus \ker \left(\nabla [C] \colon {H^1_{\CE}(F; B[-2])} \to H^4_{\CE}(F) \right).
\]
\end{thm}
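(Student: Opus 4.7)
The plan is to specialize Theorem~\ref{Thm:regulartostandard} to the generalized exact setting, where the bundle of quadratic Lie algebras $\G = \ker\rho/(\ker\rho)^\perp$ is trivial. First I would unpack the dissection data: since $\G = 0$, one has $E \cong F^\ast \oplus F$, the ample Lie algebroid $A_E$ coincides with $F$ endowed with its standard Lie bracket, and the flat $A_E$-module $B = TM/F$ is just the Bott-$F$-module viewed through $\pr_F\colon A_E \to F$. Consequently the minimal model $(\M_E, Q_E)$ reduces to $(B^\ast[2]\oplus F[1], d_{\CE} + d_T)$ where $d_{\CE}$ is the Chevalley-Eilenberg differential for the $F$-module $\widehat{S}(B[-2])$.

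Next I would compute the transgression cocycle $d_T$ of Equation~\eqref{Eq: Def of dT} in this setting. The curvature term $R^\nabla(\rhothree[2], j(b)[-2])$, described in Equation~\eqref{Eq:Def of Rrhob} as a pairing against sections of $\G$, vanishes identically when $\G = 0$. The remaining contribution $-\nabla_{j(b)} C_\nabla$ identifies, after recalling from Lemma~\ref{Lem:Cnabla} that $C_\nabla = \tfrac{1}{2}H = C$ is the \u{S}evera $3$-form, with the covariant derivative of the $F$-Chevalley-Eilenberg $3$-cocycle $C$ along $j(b)$. Thus the canonical class $[d_T] \in H^3_{\CE}(A_E; B^\ast[2]) = H^3_{\CE}(F; B^\ast[2])$ coincides with $\nabla[C]$.

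With this identification in hand, Theorem~\ref{Thm:regulartostandard} gives a spectral sequence whose first sheet is
\[
E_1^{p,q} \cong H^{q+3p}_{\CE}(F;\, S^{-p}(B[-2])),
\]
with differential $d_1 = [d_T] = \nabla[C]$. Under the assumed degeneration at $E_2$, the associated graded of $H^\bullet_{\mathrm{st}}(E)$ coincides with $E_2^{p,q}$; reindexing by $(k,l) = (q+3p,\,-p)$ with $n = p+q$ (so $k + 2l = n$) produces the displayed formula for $H^n_{\mathrm{st}}(E)$. The special case $n = 3$ receives contributions only from $(k,l) = (3,0)$ and $(k,l) = (1,1)$, yielding respectively $H^3_{\CE}(F)/\img \nabla[C]$ and $\ker\bigl(\nabla[C]\colon H^1_{\CE}(F; B[-2]) \to H^4_{\CE}(F)\bigr)$; this also matches Corollary~\ref{CorH3} specialized to the generalized exact case. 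The only non-routine step is the vanishing of the curvature term of $d_T$, but this is essentially a bookkeeping check enforced by $\G = 0$, so no genuine obstacle remains once the general machinery has been set up.
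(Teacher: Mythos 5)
Your proposal is correct and follows essentially the same route as the paper: specialize Theorem~\ref{Thm:regulartostandard}, note that $\G=0$ forces $A_E = F$ and kills the curvature term $R^\nabla(\rhothree[2], j(b)[-2])$ (which is valued in $\wedge^2\G^\ast$), identify $C_\nabla$ with the \u{S}evera form $C=\tfrac{1}{2}H$ via Lemma~\ref{Lem:Cnabla} so that $[d_T]=\nabla[C]$, and read off the formula from the degeneration hypothesis. The reindexing $(k,l)=(q+3p,-p)$ and the degree-$3$ bookkeeping are also exactly as in the paper.
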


\begin{example}\label{Example:irrational alpha}
  Let $\mathbb{T}^2=\R^2/(\Z\times \Z)$ be the smooth $2$-torus and $F \subset T\mathbb{T}^2$ any integrable distribution of rank $1$. Consider the generalized exact Courant algebroid $E = F^\ast \oplus F$ over $\mathbb{T}^2$. In this case, the \u{S}evera class $[C] \in H^3_{\CE}(F)$ vanishes since $H^3_{\CE}(F) = 0$.
  Hence, the standard cohomology of $E$ is given by
  \[
    H^n_{\operatorname{st}}(E) = \begin{cases}
                                         H^0_{\CE}\big(F, S^m((T\mathbb{T}^2/F)[-2])\big) \cong \ker d_F^{\operatorname{Bott}}\mid_{S^m((T\mathbb{T}^2/F)[-2])}, & n=2m; \\
                                         H^1_{\CE}\big(F, S^m((T\mathbb{T}^2/F)[-2])\big) \cong H^1_{\CE}(F) \otimes_{\mathbb{R}} H^0_{\CE}(F, S^m((T\mathbb{T}^2/F)[-2])),
                                         & n=2m+1.
                                         \end{cases}
  \]
  Assume that $F$ is generated by a vector $(1,\nu) \in \R^2$, which foliates into a submanifold $\mathcal{F} \subset \mathbb{T}$. Then one gets
  \[
    H^1_{\CE}(F) \cong H^1_{\operatorname{dR}}(\mathcal{F}, \R) = \begin{cases}
                                   \R, & \mbox{if $\nu$ is rational}, \\
                                   0, & \mbox{if $\nu$ is irrational}.

    \end{cases}
  \]
  In particular, we have $H^3_{\operatorname{st}}(E) = 0$ if $\nu$ is irrational, and thus the Courant algebroid structure on $E$ is rigid up to isomorphisms.

  When $\nu$ is rational, any class $[\beta] \in H_{\operatorname{st}}^3(E) \cong H_{\CE}^1(F, B[-2])$ determines a deformation of the anchor map by
  \[
    \rho_\beta(x+\xi) = x + \beta(x),
  \]
  for all $x \in \Gamma(F), \xi \in \Gamma(F^\ast)$.
  The Dorfman bracket is rigid in both rational and irrational cases, whose components are given by
  \begin{align*}
     x \circ \xi =  L_x \xi  & \qquad \mbox{ and } \qquad \xi \circ x  = -L_x \xi + d_F \langle \xi \mid x \rangle,
  \end{align*}
  for all $x \in \Gamma(F), \xi \in \Gamma(F^\ast)$.
  Hence, the infinitesimal formal deformation of the Courant algebroid $E = F \oplus F^\ast$ is equivalent to that of the rank $1$ distribution $F \subseteq TM$.
\end{example}

Complex Courant algebroid are defined analogously to (real) Courant algebroid in Definition~\ref{Def of Courant algebroid}, except that the pseudo-metric $g$ is $\C$-valued, the anchor is $TM \otimes \C$-valued, and $\C$-linearity replaces $\R$-linearity;  see~\cite{GS} for the precise definition.
Our main results and construction also work  for complex regular Courant algebroids. Below is an example of complex generalized exact Courant algebroid.
\begin{Ex}
	Given a complex manifold $X$ and a $\bar{\partial}$-closed $(0,3)$-form $H^{0,3}$, there is a complex Courant algebroid structure on $C_X^{0,1} = T_X^{0,1} \oplus (T_{X}^{0,1})^\ast$ twisted by $H^{0,3}$. In this case, the cohomology class $[d_T] \in H_{\operatorname{Dol}}^3(X, (T_X^{1,0})^\ast[2])$ is represented by $\partial H^{0,3}$ via an obvious degree shifting.
If the corresponding Chevalley-Eilenberg-to-standard spectral sequence degenerates at its second sheet,
then the standard cohomology of $C_X^{1,0}$ is given by
	\[
	H^n(C^{0,1}_X)  \cong  \bigoplus_{k+2l=n} \frac{\ker  \partial H^{0,3} \colon H_{\operatorname{Dol}}^{k}\big(X; S^{l}(T_X^{1,0}[-2])\big) \to H_{\operatorname{Dol}}^{k+3}\big(X; S^{l-1}(T_X^{1,0}[-2])\big)}{\img   \partial H^{0,3} \colon H_{\operatorname{Dol}}^{k-3}\big(X; S^{l+1}(T^{1,0}_X[-2])\big) \to H_{\operatorname{Dol}}^{k}\big(X; S^{l}(T_X^{1,0}[-2])\big)}.
	\]
	In particular, if $H^{0,3}$ is also $\partial$-closed, then we have
	\[
	H^n(C^{0,1}_X)  \cong \bigoplus_{k+2l=n} H_{\operatorname{Dol}}^{k}\big(X; S^{l}(T_X^{1,0}[-2])\big).
	\]
\end{Ex}

\subsection{Regular Courant algebroids arising from regular Lie algebroids}
Let $(A, \rho_A, [-,-]_A)$ be a regular Lie algebroid over $M$ with characteristic distribution $F=\rho_A(A)\subseteq TM$. Then there exists two short exact sequences of vector bundles over $M$
\begin{equation}\label{Eq:SES of A}
  0 \to K:= \ker(\rho_A) \hookrightarrow A \xrightarrow{\rho_A} F \to 0,
\end{equation}
and
\[
0 \rightarrow F \xrightarrow{i} TM \xrightarrow{\pr_B} B:= TM/F \rightarrow 0.
\]
Consider the vector bundle $E := A^\ast \oplus A$ equipped with a pseudo-metric $\langle -,- \rangle$ defined by
\[
\langle \xi+x,\eta+y\rangle  := \frac{1}{2}\langle \xi \mid y \rangle + \frac{1}{2} \langle \eta \mid x\rangle,
\]
for all $\xi,\eta \in \Gamma(A^\ast)$ and $x,y \in \Gamma(A)$. There is a regular Courant algebroid structure on $E$, whose anchor is given by
\[
 \rho(\xi+x) = \rho_A(x) \in \Gamma(F) \subseteq \Gamma(TM),
\]
and whose Dorfman bracket is given by
\[
 (\xi + x) \circ (\eta + y) = L_x \eta - \iota_y d_A(\xi) + [x,y]_A.
\]
Here $d_A$ is the Chevalley-Eilenberg differential of the Lie algebroid $A$, and $L$ is the associated Lie derivative.
Moreover, we have
\[
\G = \ker\rho/(\ker\rho)^\perp \cong (A^\ast/\rho_A^\ast(F^\ast)) \oplus K.
\]
The following lemma follows from direct verifications.
\begin{lem}\label{Lem:dissection for AAast}
  Each splitting $\tau \colon F \to A$  (or equivalently a projection $\pr_K \colon A \to K$) of the short exact sequence~\eqref{Eq:SES of A} determines an identification $\G \cong K^\ast \oplus K$, and thus a dissection $\Psi$ of the regular Courant algebroid $E$
  \[
   \Psi \colon (F^\ast \oplus \G \oplus F, \langle -, - \rangle) \to (A^\ast \oplus A, \langle -, - \rangle).
  \]
  Moreover,
  \begin{itemize}
  \item the associated map $H $ is trivial, and $\nabla^\G, R$ are given respectively by
  \begin{align*}
    \nabla^\G_{x_F} (z+\zeta) &= L_{\tau(x_F)}(z+\pr_K^\ast(\zeta)) = [\tau(x_F), z]_A + L_{\tau(x_F)}\pr_K^\ast(\zeta), \\
    R(x_F,y_F) &=  [\tau(x_F), \tau(y_F)]_A - \tau([x_F, y_F]),
  \end{align*}
  for all $x_F, y_F \in \Gamma(F), z \in \Gamma(K), \zeta \in \Gamma(K^\ast)$;
  \item the ample Lie algebroid $A_E$ is naturally identified with the abelian extension $A \ltimes K^\ast$ of the Lie algebroid $A$ along the $A$-module $K^\ast$.
  \end{itemize}
\end{lem}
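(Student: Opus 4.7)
The plan is to unpack the identifications and then verify each claim by direct computation using the explicit Dorfman bracket $(\xi+x)\circ(\eta+y)=L_x\eta-\iota_y d_A\xi+[x,y]_A$ and the pseudo-metric on $E = A^\ast \oplus A$.

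First I would identify the bundle $\G$ intrinsically. Since $\rho(\xi+x)=\rho_A(x)$, the kernel is $\ker\rho = A^\ast \oplus K$. A straightforward computation with the pairing shows $(\ker\rho)^\perp = \rho_A^\ast(F^\ast) \oplus 0$, so
\[
\G = \ker\rho/(\ker\rho)^\perp \cong (A^\ast/\rho_A^\ast(F^\ast)) \oplus K \cong K^\ast \oplus K,
\]
where the last isomorphism comes from dualizing the short exact sequence~\eqref{Eq:SES of A}. A splitting $\tau \colon F \to A$ yields the decomposition $A = \tau(F) \oplus K$ and dually $A^\ast = \rho_A^\ast(F^\ast) \oplus \pr_K^\ast(K^\ast)$. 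I would then define $\Psi \colon F^\ast \oplus \G \oplus F \to A^\ast \oplus A$ by
\[
\Psi(\xi_F + (\zeta+z) + x_F) := \bigl(\rho_A^\ast \xi_F + \pr_K^\ast \zeta\bigr) + \bigl(z + \tau(x_F)\bigr),
\]
and verify the isometry property term-by-term using $\langle \rho_A^\ast \xi_F \mid \tau(y_F)\rangle = \langle \xi_F \mid y_F\rangle$, $\langle \pr_K^\ast \zeta \mid w\rangle = \langle \zeta \mid w\rangle$ for $w \in K$, together with the vanishing of the cross-pairings $\langle \rho_A^\ast \xi_F \mid z\rangle = 0$ and $\langle \pr_K^\ast \zeta \mid \tau(y_F)\rangle = 0$.

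Next I would compute $(H,R,\nabla^\G)$ by evaluating the defining formulas~\eqref{Eq:F-conn on G}--\eqref{Eq:FFtoFast} on the Dorfman bracket. Since $\tau(x_F) \circ \tau(y_F) = [\tau(x_F), \tau(y_F)]_A$ has no $A^\ast$-component, $H$ is trivial. Decomposing
\[
[\tau(x_F), \tau(y_F)]_A = \tau([x_F,y_F]) + \bigl([\tau(x_F),\tau(y_F)]_A - \tau([x_F,y_F])\bigr),
\]
the second summand lies in $K$ because its image under $\rho_A$ vanishes, yielding the claimed formula for $R$. Similarly, $\tau(x_F) \circ (z+\pr_K^\ast\zeta) = L_{\tau(x_F)}\pr_K^\ast\zeta + [\tau(x_F),z]_A$, with $[\tau(x_F),z]_A \in \Gamma(K)$ automatically, and projecting the first term to $K^\ast$ via the restriction to $K$ directly recovers the stated expression for $\nabla^\G$.

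For the ample Lie algebroid, Proposition~\ref{Prop:CSX 2.1} identifies $A_E$ with $F \oplus \G = F \oplus (K^\ast \oplus K)$ as a vector bundle, which via $\tau$ is isomorphic to $A \oplus K^\ast = (\tau(F)\oplus K) \oplus K^\ast$. Substituting the expressions for $R$ and $\nabla^\G$ obtained above into the bracket formulas of Proposition~\ref{Prop:CSX 2.1} shows that the induced Lie algebroid structure is exactly the abelian extension $A \ltimes K^\ast$ of $A$ by the $A$-module $K^\ast$. No conceptual obstacle arises; the entire argument is a bookkeeping exercise of tracking the splitting $\tau$ through the definitions of the Dorfman bracket, the pseudo-metric, and the associated structure maps.
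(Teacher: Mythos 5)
Your proof is correct and follows exactly the route the paper intends: the paper gives no argument beyond the remark that the lemma ``follows from direct verifications,'' and your computation of $(\ker\rho)^\perp=\rho_A^\ast(F^\ast)$, the explicit isometry $\Psi$, and the evaluation of the Dorfman bracket on $\tau(F)$ and $K^\ast\oplus K$ supplies precisely those verifications. The identifications and the resulting formulas for $H$, $R$, $\nabla^\G$, and the abelian extension $A\ltimes K^\ast$ all check out.
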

Let us fix a splitting $\tau \colon F \to A$ (equivalently   $ \pr_K \colon A \to K$) of the short exact sequence~\eqref{Eq:SES of A} in the sequel and choose a triple $(j,\nabla^F,\nabla^B)$.
The Rothstein algebra of $E = A \oplus A^\ast$ reads
\[
C^\infty(\E) = C^\infty(T^\ast[2]M \oplus A[1] \oplus A^\ast[1]) \cong \Gamma(\widehat{S}(T[-2]M)  \otimes \wedge^\bullet A \otimes \wedge^\bullet A^\ast) .
\]
The underlying graded manifold $B^\ast[2] \oplus A_E[1]$ of the minimal model $\M_E$ is identified with $B^\ast[2] \oplus A[1] \oplus K^\ast[1]$.
Note that the $A_E$-module structure on $B$ is obtained from the trivial extension of the Bott $A$-module structure on $B$. As a consequence, the first sheet of the Chevalley-Eilenberg-to-standard spectral sequence is clear:
\[
    E_1^{p,q} \cong H_{\CE}^{q+3p}\big(A_E; S^{-p}(B[-2])\big) \cong \bigoplus_{m+n=q+3p} H^m_{\CE}\big(A; \wedge^n K \otimes S^{-p}(B[-2])\big).
\]

We now investigate the associated cohomology class $[d_T] \in H_{\CE}^3(A_E; B^\ast[2])$, viewed as a map
\[
[\Transgression] \colon \bigoplus_{n,m}H_{\CE}^n(A_E; S^m(B[-2])) \to \bigoplus_{n,m}H_{\CE}^{n+3}(A_E; S^{m-1}(B[-2])).
\]
Note that the associated metric connection $\nabla$ on $E \cong F^\ast \oplus \G \oplus F$ induces a linear connection $\nabla^A$ on $A$ defined by for all $x \in \Gamma(TM), a \in \Gamma(A)$,
\begin{equation}\label{Eq:Def of nablaA}
\nabla^A_x a = \nabla^B_{x_B} \pr_K(a) + [\tau(x_F), \pr_K(a)]_A + \tau(\nabla^F_{x_F} \rho_A(a) + \pr_F[j(x_B), \rho_A(a)]) + R(x_F, \rho_A(a)),
\end{equation}
where $x_F$ and $x_B$ are components of $x$ in $\Gamma(F)$ and $\Gamma(B)$, respectively.
Then we have the basic $A$-connection $\nabla^{\bas}$ on $TM$ and the basic curvature $R^{\bas}_{\nabla^A} \in C^2(A; \Hom(TM,A))$ induced by $\nabla^A$ (cf.~\cite{AC}):
   \[
     \nabla^{\bas}_a(u) := \rho_A(\nabla^A_u a) + [\rho_A(a), u],
   \]
  and
	 	\begin{align*}
 {R^{\bas}_{\nabla^A}}(u[-1])(a^\prime, a^{\prime\prime}) & := \nabla^A_{u}[a^\prime, a^{\prime\prime}]_A - [\nabla^A_u a^\prime, a^{\prime\prime}]_A - [a^\prime, \nabla^A_u  a^{\prime\prime}]_A - \nabla^A_{\nabla^{\bas}_{a^{\prime\prime}}u}a^\prime +\nabla^A_{\nabla^{\bas}_{a^\prime} u} a^{\prime\prime},
	 	\end{align*}
for all $a, a^\prime, a^{\prime\prime} \in \Gamma(A)$ and $u \in \Gamma(TM)$.

According to~\cite{GsM}, each regular Lie algebroid $A$ carries a canonical cohomology class
\[
[\omega] \in H^2_{\CE}(A;\Hom(B,K)),
\]
 which is represented by the restriction of the basic curvature $(-R^{\bas}_{\nabla^A})$. Explicitly, substituting the defining Equation~\eqref{Eq:Def of nablaA} of $\nabla^A$ into the above formula of $R^{\bas}_{\nabla^A}$,  we have
  \begin{align}\label{Eq:basic curvature}\nonumber
     &R^{\bas}_{\nabla^A}(j(b))(a,a^\prime) \\
     =& \nabla^B_{b}[\pr_K(a), \pr_K(a^\prime)] - [\nabla^B_b \pr_K(a), \pr_K(a^\prime)] - [\pr_K(a), \nabla^B_b \pr_K(a^\prime)] \notag \\
     &\quad  - R^{\nabla^A}(\rho_A(a), b)\pr_K(a^\prime) + R^{\nabla^A}(\rho_A(a^\prime), b)\pr_K(a) \notag \\
     &\quad\qquad + \nabla^B_b R(\rho_A(a), \rho_A(a^\prime)) - R(\pr_F[j(b), \rho_A(a)], \rho_A(a^\prime)) - R(\rho_A(a), \pr_F[j(b), \rho_A(a^\prime)]),
  \end{align}
  for all $b \in \Gamma(B)$ and $a, a^\prime \in \Gamma(A)$.
\begin{lem}\label{Lem:transgression of A}
  Under the isomorphism
  \[
      H^{3}_{\CE}(A_E; B^\ast[2]) \cong H^{3}_{\CE}(A \ltimes K^\ast; B^\ast[2]) \cong H^2_{\CE}(A; \Hom(B,K)),
  \]
  the cohomology class $[\Transgression]$ of the regular Courant algebroid $E =A \oplus A^\ast$ is identified with the cohomology class $[\omega]$ of the regular Lie algebroid $A$.
\end{lem}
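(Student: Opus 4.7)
The plan is to compute $d_T$ explicitly with respect to the dissection of Lemma~\ref{Lem:dissection for AAast} and identify its relevant component with $-R^{\bas}_{\nabla^A}$ via Equation~\eqref{Eq:basic curvature}.

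First, fix a splitting $\tau\colon F\to A$ of~\eqref{Eq:SES of A}, so that $A\cong \tau(F)\oplus K$ and $\G\cong K^\ast\oplus K$ as in Lemma~\ref{Lem:dissection for AAast}. Choose a triple $(j,\nabla^F,\nabla^B)$ where $\nabla^B$ on $\G$ comes from a metric $B$-connection on $K$ together with its dual on $K^\ast$. Under the identification $A_E\cong A\ltimes K^\ast$, one has $A_E^\ast\cong A^\ast\oplus K$, yielding a decomposition of $\wedge^3 A_E^\ast$ by the number of $K$-factors, and hence a similar decomposition of $C^3(A_E;B^\ast[2])$. The isomorphism asserted in the statement sends $[d_T]$ to the class of the $(\wedge^2 A^\ast\otimes K)$-component of $d_T(b[-2])$, regarded as an element of $\Gamma(\wedge^2 A^\ast\otimes K\otimes B^\ast)\cong C^2(A;\Hom(B,K))$; that this descends to a well-defined map in cohomology follows from the Hochschild-Serre-type analysis of the abelian extension $0\to K^\ast\to A_E\to A\to 0$, combined with the fact that $K^\ast$ acts trivially on $B$.

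Next, expand $d_T(b[-2])=R^\nabla(\rhothree[2],j(b)[-2])-\nabla_{j(b)}C_\nabla$ using~\eqref{Eqt:Cnablaexplict}, \eqref{Eq:Def of Rrhob}, and the explicit expressions of $\nabla^\G$, $R$, and $\nabla^A$ from Lemma~\ref{Lem:dissection for AAast} and Equation~\eqref{Eq:Def of nablaA}. The vanishing of $H$ simplifies $C_\nabla$ to its Cartan-form part plus the curvature-type part $g^\G(R(-,-),-)$. Once the $(\wedge^2 A^\ast\otimes K)$-component is isolated, the six summands of~\eqref{Eq:basic curvature} should appear verbatim up to an overall sign: the bracket-type terms arise from the $\nabla^B$-derivatives of $[-,-]^\G$ and of $R$ in $\nabla_{j(b)}C_\nabla$, via $\nabla^B$-invariance of $g^\G$; the $R^{\nabla^A}$-terms arise from $R^\nabla(\rhothree[2],j(b)[-2])$ translated through~\eqref{Eq:Def of nablaA}. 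The remaining components of $d_T(b[-2])$ of $K$-degree $\neq 1$ are either automatically zero (since $K^\ast$ is abelian in $A_E$ and acts trivially on $B$) or can be absorbed into a $d_{\CE}$-coboundary, and therefore do not contribute to the image class in $H^2(A;\Hom(B,K))$.

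The main obstacle is the bookkeeping of this matching. Expanding $\nabla_{j(b)}C_\nabla$ and $R^\nabla(\rhothree[2],j(b)[-2])$ produces more terms than the six appearing in~\eqref{Eq:basic curvature}, so the correct cancellations must be tracked carefully using the ad-invariance identity $\nabla^B g^\G=0$, the torsion-freeness of $\nabla^F$, and the structural relation~\eqref{Eq:Def of nablaA} linking $\nabla$ on $E$ to $\nabla^A$ on $A$. Since Proposition~\ref{Lem:Transgression} guarantees that $[d_T]$ is independent of the auxiliary choices and since $[\omega]$ is similarly canonical by~\cite{GsM}, it is enough to verify the identification for one convenient choice of $(\tau,j,\nabla^F,\nabla^B)$, which reduces the problem to this explicit (if lengthy) tensor computation.
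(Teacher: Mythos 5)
Your proposal follows essentially the same route as the paper's proof: fix the dissection of Lemma~\ref{Lem:dissection for AAast} together with a compatible triple $(j,\nabla^F,\nabla^B)$, expand $d_T(b[-2]) = R^\nabla(\rhothree[2],j(b)[-2]) - \nabla_{j(b)}C_\nabla$ component by component using the explicit formulas for $\nabla^\G$, $R$, $H=0$, and $\nabla^A$, and match the result against $-R^{\bas}_{\nabla^A}(j(b))$ via Equation~\eqref{Eq:basic curvature}. The only refinement worth noting is that no coboundary absorption and no Hochschild--Serre argument are needed: the computation shows that the three nonzero components of $d_T(b[-2])$, lying in $\Gamma(F^\ast\otimes K^\ast\otimes K)$, $\Gamma(\wedge^2 F^\ast\otimes K)$, and $\Gamma(\wedge^2 K^\ast\otimes K)$, assemble exactly into $-R^{\bas}_{\nabla^A}(j(b))\in C^2(A;K)\subset C^3(A_E)$ at the cochain level, with every component of $K$-degree $\neq 1$ vanishing identically.
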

\begin{proof}
Recall that the map $d_T \colon \Gamma(B[-2]) \to C^3(A_E)$ is defined by
\[
 d_T(b[-2]) = R^\nabla(\rhothree[2], j(b)[-2]) - \nabla_{j(b)}C_\nabla.
\]
Here $R^\nabla(\rhothree[2],j(b)[-2]) \in \Gamma(F^\ast \otimes K^\ast \otimes K) \subset C^3(A_E)$ is given by for all $x_F \in \Gamma(F), z_K \in \Gamma(K)$,
\begin{align*}
 R^\nabla(\rhothree[2],j(b)[-2])(\tau(x_F), z_K) &= R^\nabla(x_F, j(b))z_K  = R^{\nabla^A}(x_F, j(b))z_K \qquad \text{by Eq.~\eqref{Eq:basic curvature}} \\
 &= -R^{\bas}_{\nabla^A}(j(b))(\tau(x_F), z_K).
\end{align*}
Meanwhile, by Lemma~\ref{Lem:Cnabla} and Lemma~\ref{Lem:dissection for AAast}, the covariant derivative $\nabla_{j(b)}C_\nabla \in C^3(A_E)$ of the torsion has two components
\[
\gamma_1(b) \in \Gamma(\wedge^2 F^\ast \otimes K), \qquad \mbox{and} \qquad \gamma_2(b) \in \Gamma(\wedge^2 K^\ast \otimes K),
\]
which are respectively given by
\begin{align*}
 &\quad \gamma_1(b)(\tau(x_F), \tau(y_F)) = \nabla^B_b R(x_F,y_F) - R(\nabla^B_b x_F, y_F) - R(x_F, \nabla^B_b y_F) \\
 &= \nabla^B_b R(x_F, y_F) - R(\pr_F[j(b),x_F], y_F) - R(x_F, \pr_F[j(b), y_F]) \quad \text{by Eq.~\eqref{Eq:basic curvature}} \\
 &= R^{\bas}_{\nabla^A}(j(b))(\tau(x_F),\tau(y_F)),
\end{align*}
and
\begin{align*}
  \gamma_2(b)(a_K, a^\prime_K) &= \nabla^B_b [a_K, a_K^\prime]_A - [\nabla^B_b a_K, a_K^\prime]_A - [a_K, \nabla_b^B a_K^\prime]_A \quad \text{by Eq.~\eqref{Eq:basic curvature}}\\
  &= R^{\bas}_{\nabla^A}(j(b))(a_K, a_K^\prime).
\end{align*}
Hence, we have
\[
  d_T(b[-2]) = -R_{\nabla^A}^{\bas}(j(b)) \in C^2(A; K) \subset C^3(A_E),
\]
which concludes the proof.
\end{proof}

Applying Theorem~\ref{Thm:regulartostandard} to $E = A \oplus A^\ast$ and using Lemma~\ref{Lem:transgression of A}, we obtain
\begin{thm}\label{Thm:Regular Lie algebroid}
  Let $(A, \rho_A, [-,-]_A)$ be a regular Lie algebroid with characteristic distribution $F \subseteq TM$ and characteristic bundle $K := \ker \rho_A$ of Lie algebras.
  If the corresponding  Chevalley-Eilenberg-to-standard spectral sequence degenerates at its second sheet,
  then the standard cohomology $H_{\operatorname{st}}^n(E) $ of the regular Courant algebroid $E = A \oplus A^\ast$ is given by
  \[
    \bigoplus_{k+l+2m=n} \frac{\ker [\omega] \colon H^k_{\CE}\big(A; \wedge^l K \otimes S^m(B[-2])\big) \to H^{k+2}_{\CE}\big(A; \wedge^{l+1}K \otimes S^{m-1}(B[-2])\big)} {\img  [\omega] \colon H^{k-2}_{\CE}\big(A; \wedge^{l-1} K \otimes S^{m+1}(B[-2])\big) \to  H^k_{\CE}\big(A; \wedge^l K \otimes S^m(B[-2])\big)},
  \]
  where $[\omega] \in H^2_{\CE}(A; \Hom(B,K))$ is the cohomology class of $A$ introduced in~\cite{GsM}.
\end{thm}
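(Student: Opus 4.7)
The plan is to specialise the general Chevalley-Eilenberg-to-standard spectral sequence of Theorem~\ref{Thm:regulartostandard} to the Courant algebroid $E = A \oplus A^\ast$, using the two structural inputs provided by Lemmas~\ref{Lem:dissection for AAast} and~\ref{Lem:transgression of A}, and then to invoke the degeneration hypothesis.

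First, I would invoke Theorem~\ref{Thm:regulartostandard} to obtain the spectral sequence with $E_1^{p,q} = H^{q+3p}_{\CE}(A_E; S^{-p}(B[-2]))$, first differential $d_1 = [\Transgression]$, converging to $H^{p+q}_{\operatorname{st}}(E)$. The closing formula of Section~\ref{Sec:2results} already expresses $H^\bullet_{\operatorname{st}}(E)$ as the $[\Transgression]$-cohomology of $E_1$ under the degeneration assumption, so the only real task is to rewrite the two inputs $H^\bullet_{\CE}(A_E; S^\bullet(B[-2]))$ and $[\Transgression]$ intrinsically in terms of $A$.

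Second, by Lemma~\ref{Lem:dissection for AAast}, $A_E$ is a semidirect product $A \ltimes K^\ast$ with $K^\ast$ carrying zero bracket and vanishing anchor, so $A_E^\ast \cong A^\ast \oplus K$. Since this extension is semidirect with trivial $2$-cocycle, I would establish the bigraded decomposition
\[
 C^n(A_E; V) \cong \bigoplus_{k + l = n} C^k\bigl(A; \wedge^l K \otimes V\bigr),
\]
for any $A$-module $V$, with the Chevalley-Eilenberg differential of $A_E$ restricting to $d_A$ on each summand; here $K$ carries the adjoint $A$-module structure $a \cdot k := [a,k]_A$, and $A$ acts on $B$ through $\rho_A$ and the Bott connection. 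Taking cohomology and specialising to $V = S^{-p}(B[-2])$ then yields
\[
 E_1^{p,q} \cong \bigoplus_{k + l = q + 3p} H^k_{\CE}\bigl(A; \wedge^l K \otimes S^{-p}(B[-2])\bigr).
\]

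Third, by Lemma~\ref{Lem:transgression of A}, the class $[\Transgression]$ corresponds, under the isomorphism $H^3_{\CE}(A_E; B^\ast[2]) \cong H^2_{\CE}(A; \Hom(B, K))$ inherited from the above decomposition, to the Gracia-Saz--Mehta class $[\omega]$. A direct calculation on representatives then shows that $d_1$ acts on the $E_1$-page by raising the $A$-degree by $2$, absorbing the freshly produced $K$-factor into $\wedge^l K \to \wedge^{l+1} K$, and pairing $\Hom(B,-)$ against one factor of $B[-2]$ to lower the symmetric power from $S^m$ to $S^{m-1}$; i.e.\ $d_1$ is realised as
\[
 [\omega] \colon H^k_{\CE}\bigl(A; \wedge^l K \otimes S^m(B[-2])\bigr) \to H^{k+2}_{\CE}\bigl(A; \wedge^{l+1} K \otimes S^{m-1}(B[-2])\bigr),
\]
with $m = -p$. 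The degeneration hypothesis then gives $H^n_{\operatorname{st}}(E) \cong \bigoplus_{p+q=n} E_2^{p,q}$; reindexing via $n = p+q$ and $m = -p$ produces $k + l + 2m = n$, recovering the stated formula. The main obstacle is justifying the bigraded decomposition of $C^\bullet(A_E; V)$ in the second step: the graded-vector-space identification is immediate from $A_E^\ast \cong A^\ast \oplus K$, but checking that $d_{\CE}^{A_E}$ preserves the bidegree requires verifying on generators $\xi \in A^\ast$ and $k \in K$ that no \emph{mixed} terms appear, which rests on both the vanishing bracket on the $K^\ast$-summand and the triviality of the extension cocycle of $A \ltimes K^\ast$.
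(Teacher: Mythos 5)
Your proposal is correct and follows essentially the same route as the paper: the paper likewise obtains the theorem by applying Theorem~\ref{Thm:regulartostandard} to $E = A \oplus A^\ast$, using the identification $A_E \cong A \ltimes K^\ast$ from Lemma~\ref{Lem:dissection for AAast} to decompose $E_1^{p,q} \cong \bigoplus_{k+l=q+3p} H^k_{\CE}(A; \wedge^l K \otimes S^{-p}(B[-2]))$, and invoking Lemma~\ref{Lem:transgression of A} to identify $d_1$ with $[\omega]$. The only difference is that you spell out the verification that the Chevalley--Eilenberg differential of the semidirect product preserves the $K$-degree, a point the paper leaves implicit; that check is standard for an abelian extension with trivial anchor on the $K^\ast$-summand and is correctly handled in your second step.
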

\begin{Rem}\label{Rem:Aone}
We would like to point out how the Rothstein dg algebra $(C^\infty(\E), d_E)$ of $E = A \oplus A^\ast$ is related to the geometry of the regular Lie algebroid $A$. Indeed, the linear connection $\nabla^A$ as in~\eqref{Eq:Def of nablaA} induces an isomorphism of graded vector bundles over the graded manifold $A[1]$
\[
 T^\ast(A[1]) \cong \pi^\ast(A^\ast[-1] \oplus T^\ast M).
\]
Here $\pi^\ast(A^\ast[-1] \oplus T^\ast M)$ denotes the pullback bundle of $A^\ast [-1] \oplus T^\ast M$ via the projection $\pi \colon A[1] \to M$. Thus, we obtain an isomorphism of graded manifolds
\[
 T^\ast[2](A[1]) \cong A[1] \times_{M} (A^\ast[1] \oplus T^\ast [2]M),
\]
and an isomorphism of graded commutative algebras
\[
  C^\infty(T^\ast[2] (A[1])) \cong \Gamma(\wedge^\bullet A^\ast \otimes \wedge^\bullet A \otimes \widehat{S}(T[-2]M) ) \cong C^\infty(\E).
\]
Furthermore, it is easy to see that the above isomorphism sends the Lie derivative $L_{d_A}$ of the Chevalley-Eilenberg differential $d_A$ of the Lie algebroid $A$ to the Hamiltonian vector field $X_\Theta$ on $\E$. Therefore, one gets an isomorphism of commutative dg algebras
\[
  \big(C^\infty(T^\ast[2](A[1])), L_{d_A}\big) \cong (C^\infty(\E), d_E).
\]
Hence, we have recovered the cohomology of the shifted cotangent bundle $T^\ast[2](A[1])$ of the dg manifold $(A[1], d_A)$, which was first obtained by Abad and Crainic via representation up to homotopy of Lie algebroids in~\cite{AC}.
\end{Rem}

\appendix
\section{Proof of Lemma~\ref{Lem:how to construct a contraction}}\label{APP: Proof of contraction lemma}
 We first verify that $(B, d_B:= d_A\mid_B)$ is a subcomplex of $(A,d_A)$. In fact, for all $b \in B = \ker d_Ah \cap \ker hd_A $, we have
 \[
   (d_A h)(d_Bb) = d_A h d_A(b) = 0, \quad \mbox{and} \quad hd_A(d_B b) = h d_A^2(b) = 0,
 \]
 which implies that $d_B b \in B$, and thus $(B, d_B)$ is a subcomplex of $(A,d_A)$.
 We also need to check that $\phi \colon A \to B$ in~\eqref{Eq:Def of sigma} takes values in $B$: Since
 \begin{align*}
     (d_A h)(\phi(a)) &= d_A (h(a)) + (d_A h^2)(d_A(a)) + d_A((hd_A h)(a)) \quad \text{by Eq.~\eqref{Eq:condition on h}} \\
     &= d_A(h(a)) - d_A(h(a)) = 0,
 \end{align*}
 and
 \begin{align*}
   (hd_A)(\phi(a)) &= h(d_A(a)) + (hd_Ah)(d_A(a)) + (hd_A^2h)(a) \quad \text{by Eq.~\eqref{Eq:condition on h}} \\
   &= h(d_A(a)) - h(d_A(a)) = 0,
 \end{align*}
   we have $\phi(a) \in B$ for all $a \in A$, and thus $\phi$ is well-defined.

Now let us examine that the maps $\phi, h$ together with the inclusion $\psi \colon B \hookrightarrow A$ form a contraction of $(A,d_A)$. By the definition~\eqref{Eq:Def of sigma} of $\phi$, we have
\[
  \psi \phi - \id_A =  d_Ah+ hd_A.
\]
 Moreover, by Equation~\eqref{Eq:condition on h}, one has
 \begin{align*}
  &\phi(\phi(a)) = \phi(a + h(d_A(a)) + d_A(h(a))) \\
  &= a + h(d_A(a)) + d_A(h(a)) + (hd_A)(a + h(d_A(a)) + d_A(h(a))) + (d_Ah)(a + h(d_A(a)) + d_A(h(a))) \\
  &= a + h(d_A(a)) + d_A(h(a)) = \phi(a),
 \end{align*}
 and it follows that $\phi$ is indeed a projection. Thus, we have $\phi \circ \psi = \id_B$. Finally, we have for all $a \in A$,
 \[
  \phi(h(a)) = h(a) + (hd_Ah)a) + (d_Ah^2)a = h(a) - h(a) = 0,
 \]
 and
 \[
  (h\psi)(\phi(a)) = h(a) + (h^2d_A)(a) + (hd_Ah)(a) = h(a) - h(a) = 0.
 \]
 Hence, $(\phi, \psi, h)$ is a contraction of $(A,d_A)$ onto $(B,d_B)$.

\section{Proof of Proposition~\ref{Lem:Transgression}}\label{App: Proof of  trangression}
To see that $d_T \in C^3(A_E, B^\ast[2])$ is a cocycle, it suffices to show that
\[
 d_{\CE}(d_T) (b[-2]) = d_T (d_{\CE}(b[-2])) + d_{\CE}(d_T(b[-2])) = 0,
\]
for all $b \in \Gamma(B)$, which indeed follows from Lemma~\ref{Lem:dst on B} and the fact that $d_E^2(b[-2])=0$.

Now we fix a dissection $\Psi$ of $E$ and prove that $[\Transgression] \in H^3_{\CE}(A_E; B^\ast[2])$ is independent of the choice of a triple $(j,\nabla^F,\nabla^B)$.
Since the map $d_T$ defined in Equation~\eqref{Eq: Def of dT} does not depend on the choice of $F$-connection $\nabla^F$, so does $[\Transgression]$.
It remains to show that $[d_T]$ does not depend on the choice of $\nabla^B$ and $j$, which will be dealt with in the subsequent two lemmas, respectively.
\begin{lem}
Suppose that $\nabla^{B\prime}$ is another $B$-connection on $\G$.
Then the $B^\ast[2]$-valued $3$-cocycles $\Transgression$ and $\Transgression^\prime$ defined from $(j,\nabla^B)$ and $(j, \nabla^{B\prime})$, respectively, are cohomologous.
\end{lem}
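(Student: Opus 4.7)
The plan is to construct the explicit primitive by hand and directly verify the coboundary identity. Since $\nabla^{B\prime}$ and $\nabla^B$ are both metric $B$-connections on $(\G, g^\G)$, their difference $\alpha := \nabla^{B\prime} - \nabla^B$ lands in $\Gamma(B^\ast \otimes \mathfrak{so}(\G, g^\G))$. Using $g^\G$ to identify $\mathfrak{so}(\G) \cong \wedge^2 \G^\ast$ and the inclusion $\G^\ast \hookrightarrow A_E^\ast$ coming from the dissection, I will view $\alpha$ as a cochain in $C^2(A_E; B^\ast[2])$. The claim to verify is $d_T^{\prime} - d_T = d_{\CE}(\alpha)$.

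First I would compute the left-hand side on a generator $b[-2]$, $b \in \Gamma(B)$, splitting $(d_T^{\prime} - d_T)(b[-2])$ into a curvature piece and a torsion-derivative piece according to the definition~\eqref{Eq: Def of dT}. For the curvature piece, I use $R^\nabla(x, j(b)) r = [\nabla^\G_x, \nabla^B_b] r - \nabla_{[x, j(b)]} r$ and the fact that $\nabla^{B\prime}$ differs from $\nabla^B$ only in its action on $\Gamma(\G)$ along $\Gamma(B)$. After the decomposition $[x, j(b)] = \pr_F[x, j(b)] + j(\pr_B[x, j(b)])$ and using $\nabla^{\operatorname{Bott}}_x b = \pr_B[x, j(b)]$, one obtains
\begin{align*}
g^\G\bigl((R^{\nabla'} - R^\nabla)(x, j(b)) r, s\bigr) = \nabla^\G_x\bigl(g^\G(\alpha(b)r, s)\bigr) - g^\G(\alpha(b)\nabla^\G_x r, s) - g^\G(\alpha(b)r, \nabla^\G_x s) - g^\G(\alpha(\nabla^{\operatorname{Bott}}_x b)r, s).
\end{align*}
For the torsion-derivative piece, I observe via Lemma~\ref{Lem:Cnabla} that $C_\nabla$ itself does not involve $\nabla^B$, so $(\nabla^{B\prime}_{j(b)} - \nabla^B_{j(b)})C_\nabla$ reduces to the action of $\alpha(b)$ as a derivation on each $\G^\ast$-factor of the explicit components $C^\G$ and $g^\G(R(-,-), -)$ in~\eqref{Eqt:Cnablaexplict}.

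Next I would compute the right-hand side $d_{\CE}(\alpha)(b[-2])$, unpacking the Chevalley-Eilenberg differential with coefficients in the $A_E$-module $B^\ast[2]$. Because the $A_E$-action on $B$ is the Bott action (trivial on $\G \subseteq A_E$ and equal to $\nabla^{\operatorname{Bott}}$ on $F$), the expansion of $d_{\CE}(\alpha)$ on three arguments in $A_E = F \oplus \G$ and one in $B$ splits into: an $F$-derivative-of-$\alpha(b)$ piece, a $\G$-bracket piece involving $\alpha([r,s]^\G, -; b)$, and an $\nabla^{\operatorname{Bott}}$-shift piece $\alpha(\nabla^{\operatorname{Bott}}_x b, -, -)$. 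I would then match these term by term against the computation of the previous step: the $F$-derivative piece pairs with the curvature-difference expression above, while the $\G$-bracket piece pairs with the derivation action on $C^\G$, and the mixed $F$-$\G$ pieces pair with the derivation action on $g^\G(R(-,-),-)$.

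I expect the main obstacle to be bookkeeping rather than substance. Several distinct components of both $d_T$ and $\alpha$ live in $\Gamma(\wedge^3 F^\ast)$, $\Gamma(\wedge^2 F^\ast \otimes \G^\ast)$, $\Gamma(F^\ast \otimes \wedge^2 \G^\ast)$, and $\Gamma(\wedge^3 \G^\ast)$, and one must track Koszul signs across the dissection carefully and confirm that contributions from $\pr_F[x, j(b)]$ versus $\pr_B[x, j(b)]$ land in the correct pieces. No genuinely new identities beyond Lemma~\ref{Lem:Cnabla}, Proposition~\ref{Prop:CSX 2.1}, and the definitions of $\nabla^{\operatorname{Bott}}$ and $R^\nabla$ should be required; the $F^\ast$-vanishing condition $\iota_\xi(d_T^{\prime} - d_T) = 0$ is immediate, so the check is confined to the $F \oplus \G$ directions.
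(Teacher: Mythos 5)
Your proposal is correct and follows essentially the same route as the paper: the paper's primitive is exactly $\psi_b(r,s)=g^\G(\phi(b,r),s)$ with $\phi=\nabla^B-\nabla^{B\prime}$, i.e.\ your $\alpha$ up to sign, obtained from the same observation that the difference of two metric $B$-connections is an $\mathfrak{so}(\G,g^\G)$-valued one-form on $B$, and the verification is the same componentwise matching of the curvature difference and the derivative-of-torsion difference against the three summands of $d_{\CE}$ of that cochain (your curvature-difference identity agrees with the paper's computation of $d_{\CE}(\psi_b)(x,r,s)$, modulo writing $x\bigl(g^\G(\alpha(b)r,s)\bigr)$ rather than $\nabla^\G_x$ of a scalar).
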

\begin{proof}
Note that the difference of $\nabla^B$ and $\nabla^{B\prime}$ defines a bilinear bundle map
\[
\phi = \nabla^{B} -\nabla^{B^\prime} \colon \Gamma(B) \otimes \Gamma(\G) \to \Gamma(\G).
\]
Let $\nabla$ and $\nabla^\prime$ be the metric connections on $E \cong F^\ast \oplus F \oplus \G$ arising from $(j,\nabla^F,\nabla^B)$ and $(j,\nabla^F, \nabla^{B\prime})$, respectively.

Define a $B^\ast[2]$-valued $2$-cochain $\psi \in \Gamma(\wedge^2 \G^\ast \otimes B^\ast[2]) \subset C^2(A_E; B^\ast[2])$ of the ample Lie algebroid $A_E$ by
\[
  \psi(r,s)(b[-2]) := \psi_{b}(r,s) := {g^{\G}}(\phi(b,r),s) = -{g^{\G}}(r, \phi(b,s)),
\]
for all $r,s \in \Gamma(\G)$ and $b  \in \Gamma(B )$.
We claim that
\[
d_T - d_T^\prime = d_{\CE}(\psi),
\]
which is equivalent to
\begin{equation}\label{Eq:tobeproved1}
 \Transgression(b[-2]) - \Transgression^\prime(b[-2]) = d_{\CE}(\psi)(b[-2]) =   -\psi_{d_{\CE}(b)} + d_{\CE}(\psi_b),
\end{equation}
for all $b  \in \Gamma(B )$. Here $\psi_{d_{\CE}(b)} \in C^3(A_E)$ is given by for all $x \in \Gamma(F), r,s \in \Gamma(\G)$,
\[
  \psi_{d_{\CE}(b)}(x,r,s) = \psi_{\pr_B([x,j(b)])}(r,s) =  {g^{\G}}(\phi(\pr_B([x,j(b)]),r),s).
\]
Meanwhile, by Proposition~\ref{Prop:CSX 2.1}, the three summands of
\begin{align*}
  d_{\CE}(\psi_b) &\in \Gamma(\wedge^3 \G^\ast) ~\oplus~ \Gamma(\wedge^2 F^\ast \otimes \G^\ast) ~\oplus~ \Gamma(F^\ast \otimes \wedge^2 \G^\ast)
\end{align*}
are defined by the following relations, for all $r,s,t \in \Gamma(\G)$, $x, y \in \Gamma(F)$,
\begin{align*}
 &\quad  d_{\CE}(\psi_b)(r,s,t) = -\psi_b([r,s]^\G, t) + \psi_b([r,t]^\G, s) - \psi_b([s,t]^\G, r) \\
&= -{g^{\G}}(\phi(b,[r,s]^\G), t) + {g^{\G}}(\phi(b,[r,t]^\G), s) - {g^{\G}}(\phi(b, [s,t]^\G), r) \\
&= {g^{\G}}([r,s]^\G, \phi(b,t)) - {g^{\G}}([r,t]^\G, \phi(b,s)) + {g^{\G}}([s,t]^\G, \phi(b,r)) \quad \text{since ${g^{\G}}$ is ad-invariant}\\
&= {g^{\G}}([r,s]^\G, \phi(b,t)) + {g^{\G}}([r,\phi(b,s)]^\G, t) + {g^{\G}}([\phi(b,r),s]^\G, t) \\
&= -(\nabla_{j(b)} C_\nabla - \nabla^\prime_{j(b)} C_{\nabla^\prime}) (r,s,t),
\end{align*}
\begin{align*}
 d_{\CE}(\psi_b)(x,y,r) &=  -\psi_b(R(x,y),r) + \psi_b(\nabla^\G_x r, y) - \psi_b(\nabla^\G_y r, x) \\
&= {g^{\G}}(R(x,y), \phi(b,r)) =  -(\nabla_{j(b)} C_\nabla - \nabla^\prime_{j(b)} C_{\nabla^\prime}) (x,y,r),
\end{align*}
and
\begin{align*}
 d_{\CE}(\psi_b)(x,r,s) &= x \psi_b(r,s) -\psi_b(\nabla^\G_x r, s) + \psi_b(\nabla^\G_x s, r) - \psi_b([r,s]^\G, x) \\
&= x{g^{\G}}(\phi(b,r),s) - {g^{\G}}(\phi(b,\nabla^\G_x r), s) - {g^{\G}}(\nabla^\G_x s, \phi(b,r)) \\
&= x{g^{\G}}(\phi(b,r),s) -{g^{\G}}(\phi(b,\nabla^\G_x r), s) - x{g^{\G}}(s,\phi(b,r)) + {g^{\G}}(\nabla^\G_x \phi(b,r), s) \\
&= -{g^{\G}}((\nabla^B_{b}-\nabla^{B^\prime}_{b})\nabla^\G_x r - \nabla^\G_x (\nabla^B_{b}-\nabla^{B^\prime}_{b})r, s) \\
&= {g^{\G}}(R^\nabla(x,j(b))r + \nabla^B_{\pr_B[x,j(b)]}r - R^{\nabla^\prime}(x,j(b))r - \nabla^{B\prime}_{\pr_B[x,j(b)]}r, s) \\
&= (R^{\nabla}(\rhothree[2],j(b)[-2]) - R^{\nabla^\prime}(\rhothree[2],j(b)[-2]))(x,r,s) + g^\G(\phi(\pr_B[x,j(b)], r), s) \\
&=  (R^{\nabla}(\rhothree[2],j(b)[-2]) - R^{\nabla^\prime}(\rhothree[2],j(b)[-2]) + \psi_{d_{\CE}^B(b[-2])})(x,r,s).
\end{align*}
Combining the above three equations, we obtain the desired Equation~\eqref{Eq:tobeproved1}.
\end{proof}

\begin{lem}
  Suppose that $j^\prime \colon B \to TM$ is another splitting of the short exact sequence~\eqref{Eq:SES of VB}.
  Then the $B^\ast[2]$-valued $3$-cocycles $\Transgression$ and $\Transgression^\prime$ defined from $(j,\nabla^B)$ and $(j^\prime, \nabla^{B})$, respectively, are cohomologous.
\end{lem}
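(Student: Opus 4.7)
The plan is to mimic the structure of the preceding lemma: isolate a $C^\infty(M)$-linear datum measuring the difference between $j$ and $j'$, use it to construct an explicit 2-cochain $\gamma \in C^2(A_E; B^\ast[2])$, and verify directly that $d_T - d_T' = d_{\CE}(\gamma)$. The natural difference here is
\[
 \sigma := j - j' \colon B \to F,
\]
which is a well-defined bundle map because both $j$ and $j'$ right-split the short exact sequence~\eqref{Eq:SES of VB}, so their difference is annihilated by $\pr_B$. Viewing $\sigma(b) \in \Gamma(F) \subseteq \Gamma(A_E)$, the natural primitive is
\[
 \gamma \in C^2(A_E; B^\ast[2]), \qquad \gamma(a_1,a_2)(b[-2]) := C_\nabla\bigl(\sigma(b), a_1, a_2\bigr),
\]
which is a legitimate 2-cochain by the $C^\infty(M)$-linearity and total antisymmetry of $C_\nabla$ (Lemma~\ref{Lem:Cnabla}).

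A key preliminary observation is that $C_\nabla$ itself does not depend on the choice of $j$: its explicit form in Lemma~\ref{Lem:Cnabla} involves only the dissection data $(C^{\G}, R, H)$ and the $F$-connection $\nabla^\G$, none of which is built out of $j$. Hence the $j$-dependence of $d_T(b[-2]) = R^\nabla(\tilde{\rho}[2], j(b)[-2]) - \nabla_{j(b)} C_\nabla$ enters only (i) through the argument $j(b)$ in the last covariant derivative, and (ii) through the extended $B$-connection on $F$ and $F^\ast$, which is defined via $\nabla^B_b x = \pr_F[j(b),x]$ and therefore shifts when $j$ is replaced by $j'$. I would organize the computation as $d_T - d_T' = (\text{direct contribution from (i)}) + (\text{indirect contribution from (ii)})$.

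The direct contribution is $R^\nabla(\tilde{\rho}[2], \sigma(b)[-2]) - \nabla_{\sigma(b)}C_\nabla$, evaluated using the fact that $\sigma(b) \in \Gamma(F)$ and that $\nabla$ restricted to $F$-directions is dissection-determined but $j$-independent. Applying a Cartan-type identity for the Chevalley-Eilenberg calculus of the ample Lie algebroid,
\[
 \nabla_{\sigma(b)} C_\nabla \;=\; d_{\CE}\bigl(\iota_{\sigma(b)} C_\nabla\bigr) + \iota_{\sigma(b)}\bigl(d_{\CE} C_\nabla\bigr),
\]
identifies the first part with $d_{\CE}(\gamma)(b[-2])$, while the second part is controlled by $d_{\CE} C_\nabla$, which by Remark~\ref{Rem:3charclass} and the cocycle property of $C_{\nabla^E}$ differs from zero only by dissection-explicit terms involving $R$ and $H$. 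A direct expansion of $R^\nabla(\tilde{\rho}[2], \sigma(b)[-2])$ (extending the computation given in the proof of Lemma~\ref{Lem:dst on B} to the case where $\sigma(b) \in F$ rather than $j(b) \in j(B)$) produces precisely these correction terms with the right signs to cancel. Meanwhile, the indirect contribution from (ii) is computed using
\[
 \nabla^B_b x - \nabla^{B'}_b x \;=\; [\sigma(b),x] - \sigma\bigl(\pr_B[j'(b),x]\bigr), \qquad x \in \Gamma(F),
\]
obtained from involutivity of $F$ and $\pr^j_F(v) - \pr^{j'}_F(v) = -\sigma(\pr_B v)$; the analogous formula on $F^\ast$ follows by duality. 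Feeding this into the definition of $R^\nabla(\tilde{\rho}[2], j(b)[-2])$ and grouping terms according to their tensor type in $\Gamma(\wedge^3 F^\ast) \oplus \Gamma(\wedge^2 F^\ast \otimes \G^\ast) \oplus \Gamma(F^\ast \otimes \wedge^2 \G^\ast)$ yields the remaining summands in $d_{\CE}(\gamma)(b[-2])$.

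The main obstacle is the bookkeeping: several terms with different tensor types need to be matched across the three components in the decomposition of $C^3(A_E)$, and the change of extended $B$-connection contributes both to the curvature term and (through $\pr_F$) to the covariant-derivative term. To keep this tractable, I would verify the equality case-by-case on generators $a_i \in \Gamma(\G) \cup \Gamma(F) \subset \Gamma(A_E)$, exactly parallel to the three case distinctions carried out in the proof of the preceding lemma, and reduce to algebraic identities among $R$, $H$, $\nabla^\G$, and the Lie bracket on $F$ that follow from Proposition~\ref{Prop:CSX 2.1} and the Leibniz-Jacobi identity~\eqref{Eq:Jacobi}.
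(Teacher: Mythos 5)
Your overall strategy coincides with the paper's: set $J := j - j' \colon B \to F$, exhibit an explicit primitive in $C^2(A_E; B^\ast[2])$ built from $H$ and $R$ contracted with $J(b)$, and verify $\Transgression - \Transgression' = d_{\CE}(\text{primitive})$ case by case on generators of $\Gamma(A_E) \cong \Gamma(\G \oplus F)$; your formula for the change of the extended $B$-connection on $F$ is also correct. The gap is in the choice of primitive. You propose $\gamma(b[-2]) = \iota_{J(b)}C_\nabla$, whose components, by Equation~\eqref{Eqt:Cnablaexplict}, are $\tfrac12 H(J(b),\cdot,\cdot)$ on $\wedge^2 F^\ast$ and ${g^{\G}}(R(J(b),\cdot),\cdot)$ on $F^\ast \otimes \G^\ast$. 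The primitive that actually works has components $-H(J(b),\cdot,\cdot)$ and ${g^{\G}}(R(\cdot,J(b)),\cdot) = -{g^{\G}}(R(J(b),\cdot),\cdot)$; equivalently it is $-\iota_{J(b)}C_{\nabla^E}$, the contraction of the characteristic $3$-\emph{cocycle} of Equation~\eqref{Eqt:CnablaEexplict} rather than of the torsion $C_\nabla$. The two candidates are not proportional (the weight of the $H$-part relative to the $R$-part is $\tfrac12 : 1$ for yours and $1 : 1$ for the correct one), and their difference, with components $\tfrac32 H(J(b),\cdot,\cdot)$ and $2\,{g^{\G}}(R(J(b),\cdot),\cdot)$, is not $d_{\CE}$-closed in general; so your $\gamma$ is not a primitive of $\Transgression - \Transgression'$.

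The error enters through the ``Cartan-type identity'' $\nabla_{\sigma(b)} C_\nabla = d_{\CE}(\iota_{\sigma(b)} C_\nabla) + \iota_{\sigma(b)}(d_{\CE} C_\nabla)$. Cartan's magic formula on $C^\bullet(A_E)$ holds for the Lie derivative $L_{\sigma(b)}$ along the section $\sigma(b) \in \Gamma(F) \subset \Gamma(A_E)$, not for the covariant derivative $\nabla_{\sigma(b)}$: on $F$-arguments the two operators differ by substitution of $[\sigma(b),y]_{A_E} - \nabla^F_{\sigma(b)}y = -\nabla^F_y \sigma(b) + R(\sigma(b),y)$, and these discrepancies carry exactly the $R$- and $H$-weights that separate $C_\nabla$ from $C_{\nabla^E}$. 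Moreover $C_\nabla$ is only a cochain, not a cocycle (Remark~\ref{Rem:3charclass}), so $\iota_{\sigma(b)}(d_{\CE}C_\nabla)$ is itself nontrivial. The assertion that these corrections ``cancel with the right signs'' against the expansion of $R^\nabla(\rhothree[2], \sigma(b)[-2])$ is precisely where the computation must be carried out, and doing so forces the coefficients $(-1,-1)$ rather than $(\tfrac12, 1)$. With the primitive replaced by $-\iota_{J(b)}C_{\nabla^E}$, your case-by-case plan goes through and reproduces the paper's argument.
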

\begin{proof}
Note that the difference of the two splittings defines a bundle map
\[
J := j - j^\prime \colon B \to F.
\]
Consider a $B^\ast[2]$-valued 2-cochain $\alpha = \alpha_1 + \alpha_2 \in C^2(A_E; B^\ast[2])$, where $\alpha_1 \in \Gamma(\wedge^2 F^\ast \otimes B^\ast[2])$ is defined by for all $x, y \in \Gamma(F), b  \in \Gamma(B )$,
\[
 \alpha_1(b[-2])(x,y) := -H(J(b), x, y),
\]
and $\alpha_2 \in \Gamma(F^\ast \otimes \G^\ast \otimes B^\ast[2])$ is defined by for all $x \in \Gamma(F), b  \in \Gamma(B )$, and $r \in \Gamma(\G)$,
\[
 \alpha_2(b[-2])(x,r) := {g^{\G}}(R(x,J(b)),r).
\]
Denote by $\nabla$ and $\nabla^\prime$ the compatible metric connections arising from the triple $(j,\nabla^F,\nabla^B)$ and $(j^\prime, \nabla^F, \nabla^B)$, respectively.
We claim that
\[
  d_T - d_T^\prime = d_{\CE}(\alpha),
\]
which is equivalent to for all $b  \in \Gamma(B )$,
\begin{equation}\label{Eq:j-jprime}
 \Transgression(b[-2]) - \Transgression^\prime(b[-2]) = d_{\CE}(\alpha_1(b[-2])) + d_{\CE}(\alpha_2(b[-2])) - \alpha_1(d_{\CE}(b[-2])) - \alpha_2(d_{\CE}(b[-2])).
\end{equation}
Let us first compute the right-hand side of the above equation.
Note first that $\alpha_1(d_{\CE}(b[-2])) \in \Gamma(\wedge^3 F^\ast)$ and $\alpha_2(d_{\CE}(b[-2])) \in \Gamma(\wedge^2 F^\ast \otimes \G^\ast)$ are given by
\begin{align*}
 \alpha_1(d_{\CE}(b[-2]))(x,y,z) &= -H(J(\nabla^{\operatorname{Bott}}_x b), y, z) - H(x, J(\nabla^{\operatorname{Bott}}_y b), z) - H(x, y, J(\nabla^{\operatorname{Bott}}_z b)) ,
\end{align*}
and
\begin{align*}
  \alpha_2(d_{\CE}(b[-2]))(x,y,r) &= -{g^{\G}}(R(J(\nabla^{\operatorname{Bott}}_xb), y), r)  - {g^{\G}}(R(x,J(\nabla^{\operatorname{Bott}}_yb)), r),
\end{align*}
for all $x,y,z \in \Gamma(F)$ and $r \in \Gamma(\G)$.
According to Proposition~\ref{Prop:CSX 2.1}, the element $d_{\CE}(\alpha_1(b[-2])) \in \Gamma(\wedge^3 F^\ast)$ is given by
\begin{align*}
  d_{\CE}(\alpha_1(b[-2]))(x,y,z) &= -x H(J(b),y,z) + yH(J(b),x,z) - zH(J(b), x, y) \\
                                          &\quad - H([x,y],J(b),z) + H([x,z],J(b),y) - H([y,z],J(b),x),
\end{align*}
for all $x,y,z \in \Gamma(F)$.
Meanwhile, the element $d_{\CE}(\alpha_2(b[-2])) = \beta_1+ \beta_2+\beta_3$, where
\begin{compactenum}
\item $\beta_1 \in \Gamma(\wedge^3 F^\ast)$ is given by
\begin{align*}
  \beta_1(x,y,z) &= \alpha_2(z,R(x,y)) - \alpha_2(y, R(x,z)) + \alpha_2(x, R(y,z)) \\
   &= {g^{\G}}(R(z,J(b)), R(x,y)) - {g^{\G}}(R(y,J(b)), R(x,z)) + {g^{\G}}(R(x,J(b)), R(y,z));
\end{align*}
\item $\beta_2 \in \Gamma(\wedge^2F^\ast \otimes \G^\ast)$ is given by
\begin{align*}
  \beta_2(x,y,r) &= x {g^{\G}}(R(y,J(b)), r) - y {g^{\G}}(R(x,J(b)),r) \\
  &- {g^{\G}}(R([x,y],J(b)),r) - {g^{\G}}(R(y,J(b)), \nabla_x^\G r) + {g^{\G}}(R(x,J(b)), \nabla^\G_y r),
\end{align*}
\item and $\beta_3 \in \Gamma(F^\ast \otimes \wedge^2 \G^\ast)$ is given by
\begin{align*}
  \beta_3(x,r,s) &= \alpha_2(x, [r,s]^\G) = {g^{\G}}(R(x,J(b)), [r,s]^\G),
\end{align*}
for all $x,y \in \Gamma(F)$ and $r,s \in \Gamma(\G)$.
\end{compactenum}
We then compute the left-hand side of Equation~\eqref{Eq:j-jprime}:
Since
\[
 \nabla^\G_x\nabla^\G_{y}r - \nabla^\G_{y}\nabla^\G_x r - \nabla^\G_{[x,y]}r = [R(x,y), r]^\G,
\]
according to the Leibniz-Jacobi identity~\eqref{Eq:Jacobi} for $x,y,r$,    we have
\begin{align}\label{Eq:xrs}
  &\quad (R^\nabla(\rhothree[2], j(b)[-2]) - R^{\nabla^\prime}(\rhothree[2], j^\prime(b)[-2]))(x,r,s) \notag \\
  &= {g^{\G}}(\nabla^\G_x\nabla^\G_{J(b)}r - \nabla^\G_{J(b)}\nabla^\G_x r - \nabla^\G_{[x,J(b)]}r, s) \notag \\
  &= {g^{\G}}([R(x,J(b)), r]^\G, s) \qquad \text{since ${g^{\G}}$ is ad-invariant} \notag \\
  &= {g^{\G}}(R(x,J(b)), [r,s]^\G) = \beta_3(x,r,s).
\end{align}
Since $\pr_B[j(b),x] = \pr_B[j^\prime(b),x] = -\nabla^{\operatorname{Bott}}_x b$ for all $x \in \Gamma(F)$, it follows that
\begin{equation}\label{Eq:nablaforjjprime}
  \nabla_{j(b)}x - \nabla^\prime_{j^\prime(b)}x = \pr_F[j(b),x] - \pr^\prime_F[j^\prime(b),x]  = [J(b),x] + J(\nabla^{\operatorname{Bott}}_x b).
\end{equation}
Thus, we have
\begin{align*}
  &\quad (\nabla_{j(b)}C_\nabla - \nabla^\prime_{j^\prime(b)}C_{\nabla^\prime})(x,y,r) \\
  &= J(b){g^{\G}}(R(x,y),r) - {g^{\G}}(R([J(b),x],y),r) - {g^{\G}}(R(x, [J(b),y]),r) - {g^{\G}}(R(x,y), \nabla^\G_{J(b)}r) \\
  &\quad - g^{\G}(R(J(\nabla^{\operatorname{Bott}}_x b), y), r) - g^{\G}(R(x, J(\nabla^{\operatorname{Bott}}_y b)), r).
\end{align*}
Applying the  identity~\eqref{Eq:Jacobi} to $x,y,z \in \Gamma(F)$, we have
\begin{align*}
   \nabla_x^\G R(y,z) + \nabla_y^\G R(z,x) + \nabla_z^\G R(x,y) &= R([x,y],z)) + R([y,z], x) + R([z,x], y).
\end{align*}
Combining with the fact that $\nabla^\G$ is compatible with the metric ${g^{\G}}$, we obtain
\begin{align}\label{Eq:xyr}
   -(\nabla_{j(b)}C_\nabla - \nabla^\prime_{j^\prime(b)}C_{\nabla^\prime})(x,y,r) &= (\beta_2-  \alpha_2(d_{\CE}^{B}(b[-2])))(x,y,r).
\end{align}
Finally, using Equation~\eqref{Eq:nablaforjjprime}, we have
\begin{align*}
  &\quad (\nabla_{j(b)}C_\nabla - \nabla^\prime_{j^\prime(b)}C_{\nabla^\prime})(x,y,z) \\
  &= J(b)H(x,y,z) - H([J(b),x],y,z) - H(x,[J(b),y],z) - H(x,y,[J(b),z]) \\
  &\quad - H(J(\nabla^{\operatorname{Bott}}_x b), y, z) - H(x,J(\nabla^{\operatorname{Bott}}_y b),z) - H(x,y,J(\nabla^{\operatorname{Bott}}_z b)).
\end{align*}
Thus, we have
\begin{align*}
&\quad (\nabla_{j(b)}C_\nabla - \nabla^\prime_{j^\prime(b)}C_{\nabla^\prime})(x,y,z) + d_{\CE}(\alpha_1(b[-2]))(x,y,z) \\
&= (d_{\CE}H)(J(b),x,y,z) + \alpha_1(d_{\CE}^{B}(b[-2]))(x,y,z) \qquad\qquad\qquad\qquad  \text{by Eq.~\eqref{Eq:Jacobi}}\\
&= {g^{\G}}(R(J(b),x), R(y,z)) - {g^{\G}}(R(J(b),y), R(x,z)) \\
&\qquad + {g^{\G}}(R(J(b),z), R(x,y)) + \alpha_1(d_{\CE}^{B}(b[-2]))(x,y,z)  \\
&= -\beta_1(x,y,z) + \alpha_1(d_{\CE}^{B}(b[-2]))(x,y,z) ,
\end{align*}
which implies that
\begin{equation}\label{Eq:xyz}
  -(\nabla_{j(b)}C_\nabla - \nabla^\prime_{j^\prime(b)}C_{\nabla^\prime})(x,y,z) = (d_{\CE}(\alpha_1(b[-2])) + \beta_1 - \alpha_1(d_{\CE}^{B}(b[-2])))(x,y,z).
\end{equation}
Combining Equations~\eqref{Eq:xrs},\eqref{Eq:xyr}, and~\eqref{Eq:xyz}, we obtain Equation~\eqref{Eq:j-jprime} as desired.
\end{proof}

Finally, we prove that $[d_T] \in H^3_{\CE}(A_E; B^\ast[2])$ is independent of the choice of a dissection of $E$. Suppose that $\hat{\Psi}$ is another dissection of $E$ under assumptions in Proposition~\ref{Prop: different dissection}. By Proposition~\ref{Prop:CSX 2.1}, the ample Lie algebroid $A_E$ is identified with $L(\nabla^\G, R)$ and $L(\hat{\nabla}^\G, \hat{R})$ via the two dissections $\Psi$ and $\hat{\Psi}$, respectively.
By Proposition~\ref{Prop: different dissection}, the isomorphism between these two Lie algebroids is given by
\[
 \delta_L \colon L(\nabla^\G, R) \to  L(\hat{\nabla}^\G, \hat{R}), \quad \delta_L(r+x) = (\tau(r)+\tau(\varphi(x))) + x,
\]
for all $r \in \Gamma(\G)$ and $x \in \Gamma(F)$ for some automorphism $\tau$ of the bundle $\G$ of quadratic Lie algebras and some bundle map $\varphi \colon F \to \G$.

Let us fix a triple $(j,\nabla^F, \nabla^B)$ for the dissection $\Psi$. For the new dissection $\widehat{\Psi}$, we choose the triple $(j,\nabla^F, \widehat{\nabla}^B)$ with the same splitting $j$ and torsion-free $F$-connection $\nabla^F$, while the metric $B$-connection $\widehat{\nabla}^B$ on $\G$ is the twisted one from $\nabla^B$ by $\tau$ that is defined by
\begin{equation}\label{Eq: nablahatB}
 \widehat{\nabla}^B_b r := \tau(\nabla^B_b \tau^{-1}(r)),
\end{equation}
for all $b \in \Gamma(B)$ and $r \in \Gamma(\G)$. The metric $B$-connection $\widehat{\nabla}^B$ is well-defined since $\tau$ is an automorphism of bundle of quadratic Lie algebras.
Let $\nabla$ (resp. $\widehat{\nabla}$) be the $\Psi$ (resp. $\widehat{\Psi}$)-metric connection on $F^\ast \oplus \G \oplus F$ as in Lemma~\ref{Lem:F-connection on E}.
And denote the corresponding Chevalley-Eilenberg cochains in $C^3(A_E; B^\ast[2])$ by $d_T$ and $\widehat{d_T}$, respectively.
We conclude the proof by the following fact.
\begin{lem}
   We have
   \[
    \delta_L^\ast(\widehat{d_T}) - d_T = d_{\CE}(\gamma),
   \]
   where $\gamma \in C^2(A_E; B^\ast[2]) \cong \Hom_{C^\infty(M)}(\Gamma(B[-2] \otimes S^2(\G[1] \oplus F[1])), C^\infty(M))$ is defined by
   \begin{align*}
     &\quad \gamma(b[-2], r[1]+x[1], s[1]+y[1]) \\
     &:= g^\G\left(r+\frac{1}{2}\varphi(x), (\nabla^B_b\varphi)(y) \right) - g^\G\left(r+\frac{1}{2}\varphi(y), (\nabla^B_b\varphi)(x) \right) + (\nabla^B_b\beta)(x,y)\\
     &= g^\G\left(r + \frac{1}{2}\varphi(x), \nabla_b^B \varphi(y) - \varphi(\nabla_b^B y)\right) - g^\G\left(s + \frac{1}{2}\varphi(y), \nabla_b^B \varphi(x) - \varphi(\nabla^B_bx)\right) \\
     &\quad + j(b)\beta(x,y) - \beta(\nabla^B_bx, y) - \beta(x, \nabla^B_by),
   \end{align*}
   for all $b \in \Gamma(B), r,s \in \Gamma(\G)$ and all $x,y \in \Gamma(F)$. Here $\beta \in \Gamma(\wedge^2 F^\ast)$ is the element induced from the bundle map $\beta \colon F \to F^\ast$ in the assumption of Proposition~\ref{Prop: different dissection}.
\end{lem}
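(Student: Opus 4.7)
The plan is a direct computation, organized by the decomposition
\[
C^3(A_E; B^\ast[2]) \cong \Gamma\bigl(B^\ast[2]\otimes(\wedge^3\G^\ast \oplus F^\ast\otimes\wedge^2\G^\ast \oplus \wedge^2 F^\ast\otimes\G^\ast \oplus \wedge^3 F^\ast)\bigr)
\]
induced by the identification $A_E\cong \G\oplus F$ coming from the chosen dissection. I would evaluate both sides of $\delta_L^\ast(\widehat{d_T}(b[-2])) - d_T(b[-2]) = d_{\CE}(\gamma)(b[-2])$ and verify the equation in each of these four homogeneous pieces separately.

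The first step is to derive the change-of-dissection formulas: using the explicit form of $\delta$ in Proposition~\ref{Prop: different dissection}, one expresses $\hat\nabla^\G$, $\hat R$, $\hat H$ in terms of $\nabla^\G$, $R$, $H$ and the data $(\tau,\varphi,\beta)$, and then Lemma~\ref{Lem:Cnabla} yields $\widehat{C_{\hat\nabla}}$ componentwise. The choice $\hat\nabla^B := \tau\nabla^B\tau^{-1}$ is engineered so that $\tau$ is parallel along $B$: this makes the covariant derivative $\hat\nabla_{j(b)}\widehat{C_{\hat\nabla}}$ expand cleanly, since every contribution in which $\nabla^B$ crosses a $\tau$ cancels, leaving only the derivatives $(\nabla^B_b\varphi)$ and $(\nabla^B_b\beta)$ together with curvature terms. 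Similarly, $R^{\hat\nabla}(\rhothree[2], j(b)[-2])$ differs from the $\tau$-conjugate of $R^\nabla(\rhothree[2], j(b)[-2])$ only by explicit $\nabla^B\varphi$-corrections. Combining these with the pullback $\delta_L^\ast$, the difference $\delta_L^\ast(\widehat{d_T}(b[-2])) - d_T(b[-2])$ is obtained as an explicit polynomial in $\varphi,\beta,\nabla^B\varphi,\nabla^B\beta,\nabla^\G\varphi,R,H$.

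Next, I would expand $d_{\CE}(\gamma)(b[-2])$ using the Chevalley-Eilenberg formula for $A_E = L(\nabla^\G, R)$ from Proposition~\ref{Prop:CSX 2.1} together with the ad-invariance of $g^\G$, and then compare componentwise. The $\wedge^3\G^\ast$ piece is immediate since $\gamma$ has no contribution there and $\tau$-conjugation preserves $g^\G$. The $F^\ast\otimes\wedge^2\G^\ast$ piece reduces to the Leibniz-Jacobi identity $[R(x,y),r]^\G = \nabla^\G_x\nabla^\G_y r - \nabla^\G_y\nabla^\G_x r - \nabla^\G_{[x,y]}r$ already exploited in the proof of Lemma~\ref{Lem:dst on B}. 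The $\wedge^2 F^\ast\otimes\G^\ast$ and $\wedge^3 F^\ast$ pieces, which intertwine the action of $\nabla^B$ on $R$, $H$, and $\varphi$, are the most delicate.

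The main obstacle will be the bookkeeping in these last two components, where $(\nabla^B_b\varphi)$, the curvature $R^\nabla(\cdot, j(b))$ of $\nabla$ on $\G$, and the first-order variations of $R$ and $H$ under the change of dissection all interact. I expect the required identities to follow from the cocycle condition $\{C_\nabla, C_\nabla\} = 0$ (equivalently, the Bianchi-type relations for the structure maps of the standard Courant algebroid $S(\nabla^\G, R, H)$), together with the Leibniz rule $\nabla^B_b(\varphi\circ\nabla^F x) = (\nabla^B_b\varphi)(\nabla^F x) + \varphi(\nabla^B_b\nabla^F x)$ and its analogues for $\beta$ and $H$. No conceptual surprise is anticipated beyond this extensive but mechanical verification, and the overall structure parallels the two preceding lemmas in this appendix.
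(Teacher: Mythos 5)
Your plan coincides with the paper's own proof: it establishes $\delta_L^\ast(\widehat{d_T})-d_T=d_{\CE}(\gamma)$ by the same four-fold componentwise verification (in $\wedge^3\G^\ast$, $F^\ast\otimes\wedge^2\G^\ast$, $\wedge^2F^\ast\otimes\G^\ast$, $\wedge^3F^\ast$), using the change-of-dissection formulas for $(\hat{\nabla}^\G,\hat{R},\hat{H})$ from \cite{CSX} and the fact that $\hat{\nabla}^B=\tau\nabla^B\tau^{-1}$ makes $\tau$ parallel, so the approach is essentially identical. One minor inaccuracy: the identities that close the hardest components are not ``$\{C_\nabla,C_\nabla\}=0$'' (which does not hold, since $C_\nabla$ is only a cochain) but rather the $\tau$-invariance of $g^\G$ and $C^\G$ together with the commutator identity $R^\nabla(x,b)\varphi(y)-R^\nabla(y,b)\varphi(x)=d_{\CE}(\nabla^B_b\varphi)(x,y)-\nabla^B_b(d_{\CE}\varphi)(x,y)$.
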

\begin{proof}
  We need the following relations in~\cite{CSX}*{Proposition 2.7} (see also~\cite{MR3661534}*{Theorem 2.23}).
  \begin{align}
    \widehat{\nabla}^\G_x r &= \tau \nabla^\G_x \tau^{-1}(r) + [\tau(\varphi(x)), r]^\G, \label{Eq: hatG}\\
    \widehat{R}(x,y)  &= \tau(R(x,y)) - \tau((d_{\CE}\varphi)(x,y)) + \tau([\varphi(x), \varphi(y)]^\G), \label{Eq:hatR}\\
    \frac{1}{2}\widehat{H}(x,y,z) &= \frac{1}{2}H(x,y,z) - (d\beta)(x,y,z) - g^\G(R(x,y),\varphi(z))
    + \frac{1}{2}g^\G(\varphi(x), (d_{\CE}\varphi)(y,z)) \notag \\
    &\quad - g^\G([\varphi(x),\varphi(y)]^\G, \varphi(z)) + c.p., \label{Eq:hatH}
  \end{align}
  for all $x,y,z \in \Gamma(F)$ and $r \in \Gamma(\G)$.
The proof consists of several case by case verifications:

  $(1)$ For all $b \in \Gamma(B)$ and  $r,s,t \in \Gamma(\G)$, we have
  \begin{align*}
    &\quad (\delta_L^\ast \widehat{d_T}(b[-2]))(r[1],s[1],t[1]) = \widehat{d_T}(b[-2])(\tau(r)[1],\tau(s)[1],\tau(t)[1]) = -(\widehat{\nabla}_bC_{\widehat{\nabla}})(\tau(r),\tau(s),\tau(t)) \\
    &= -j(b)C^\G(\tau(r),\tau(s),\tau(t)) + C^\G(\widehat{\nabla}^B_b \tau(r), \tau(s), \tau(t)) +  C^\G(\tau(r), \widehat{\nabla}^B_b\tau(s), \tau(t)) \\
    &\quad +  C^\G(\tau(r),\tau(s), \widehat{\nabla}^B_b\tau(t)) \\
    &= -j(b)C^\G(\tau(r),\tau(s),\tau(t)) + C^\G(\tau(\nabla^B_b r), \tau(s), \tau(t)) +  C^\G(\tau(r), \tau(\nabla^B_bs), \tau(t)) \\
    &\quad  + C^\G(\tau(r),\tau(s), \tau(\nabla^B_bt)) \\
     &= -j(b)C^\G(r,s,t) + C^\G(\nabla^B_b r, s, t) +  C^\G(r,\nabla^B_bs, t)+  C^\G(r,s,\nabla^B_bt) \\
     &= -(\nabla_bC^\G)(r,s,t) = d_T(b[-2])(r[1],s[1],t[1]).
  \end{align*}
  Here we have used the fact that $\tau^\ast C^\G = C^\G$ since $\tau$ is an automorphism of bundle of quadratic Lie algebras.

 $(2)$ For all $b \in \Gamma(B), x \in \Gamma(F)$ and $r,s \in \Gamma(\G)$, we have
 \begin{align*}
   (\delta_L^\ast \widehat{d_T}(b[-2]))(x[1],r[1],s[1]) &= \widehat{d_T}(b[-2])(x[1] + \tau(\varphi(x))[1],\tau(r)[1],\tau(s)[1]) \\
   &= g^\G(R^{\widehat{\nabla}}(x,b)\tau(r), \tau(s)) - (\widehat{\nabla}_b C^\G)(\tau(\varphi(x)),\tau(r),\tau(s)).
 \end{align*}
 Keeping in mind that $\tau^\ast g^\G = g^\G$ and that $\tau$ preserves the bracket $[-,-]^\G$, we have
 \begin{align}
     &\quad g^\G(R^{\widehat{\nabla}}(x,b)\tau(r),\tau(s)) \notag \\
     &= g^\G(\widehat{\nabla}^\G_x\widehat{\nabla}^B_b\tau(r) - \widehat{\nabla}^B_b \widehat{\nabla}^\G_x \tau(r) - \widehat{\nabla}_{[x,b]} \tau(r), \tau(s)) \qquad  \text{by Eqs.~\eqref{Eq: nablahatB} and~\eqref{Eq: hatG}} \notag \\
    &= g^\G(R^\nabla(x,b)r, s) - g^\G([\varphi(x),\nabla^B_b r]^\G -\nabla^B_b[\varphi(x), r]^\G + [\varphi(\nabla^B_b x), r]^\G , s), \label{Eq: Rxbr}
 \end{align}
 and
 \begin{align*}
   &\quad (\widehat{\nabla}_b C^\G)(\tau(\varphi(x)),\tau(r),\tau(s)) = (\nabla_b C^\G)(\varphi(x), r,s) \\
   &= j(b)g^\G([\varphi(x),r]^\G, s) - g^\G([\nabla_b^B\varphi(x), r]^\G-  [\varphi(x), \nabla^B_b r], s) - g^\G([\varphi(x),r]^\G, \nabla^B_b s) \\
 &= g^\G(\nabla^B_b[\varphi(x),r]^\G - [\nabla_b^B \varphi(x), r]^\G - [\varphi(x),\nabla^B_b r]^\G, s).
 \end{align*}
 Thus, we obtain
 \begin{align*}
   (\delta_L^\ast \widehat{d_T}(b[-2]))(x[1],r[1],s[1]) &= g^\G(R^\nabla(x,b)r, s) + g^\G([\nabla_b^B \varphi(x) - \varphi(\nabla^B_b x), r]^\G, s) \\
   &= d_T(b[-2])(x[1],r[1], s[1]) + (d_{\CE}\gamma)(b[-2], x[1], r[1], s[1]).
 \end{align*}

 $(3)$ For all $b \in \Gamma(B), x,y \in \Gamma(F)$ and $r \in \Gamma(\G)$, we have
 \begin{align}\label{Eq: 3case}
   &\quad (\delta_L^\ast \widehat{d_T}(b[-2]))(x[1], y[1], r[1]) = \widehat{d_T}(b[-2])(x[1] + \tau(\varphi(x))[1], y[1] + \tau(\varphi(y))[1], r[1]) \notag \\
   &= g^\G(R^{\widehat{\nabla}}(x,b)\tau(\varphi(y)) - R^{\widehat{\nabla}}(y,b)\tau(\varphi(x)), \tau(r)) - (\widehat{\nabla}_bC_{\widehat{\nabla}})(x+\tau(\varphi(x)), y+\tau(\varphi(y)), \tau(r)).
 \end{align}
 Using Equation~\eqref{Eq: Rxbr}, we have
 \begin{align}\label{Eq: gRanti}
   &\quad g^\G(R^{\widehat{\nabla}}(x,b)\tau(\varphi(y)) - R^{\widehat{\nabla}}(y,b)\tau(\varphi(x)), \tau(r)) = g^\G(R^\nabla(x,b)\varphi(y) - R^{\nabla}(y,b)\varphi(x), r) \notag \\
   &\qquad + g^\G(2\nabla^B_b[\varphi(x),\varphi(y)]^\G - [\varphi(x), \nabla^B_b\varphi(y)+\varphi(\nabla^B_b y)]^\G +  [\varphi(y), \nabla^B_b\varphi(x)+\varphi(\nabla^B_b x)]^\G, r).
 \end{align}
 The computation in the step $(1)$ shows that
  \begin{align}\label{Eq: nablahatxyr}
    &\quad (\widehat{\nabla}_bC_{\widehat{\nabla}})(\tau(\varphi(x)), \tau(\varphi(y)), \tau(r)) = (\nabla_bC^\G)(\varphi(x),\varphi(y),r) \notag \\
    &= g^\G(\nabla^B_b[\varphi(x),\varphi(y)]^\G, r) - g^\G([\nabla^B_b\varphi(x),\varphi(y)]^\G, r) - g^\G([\varphi(x),\nabla^B_b\varphi(y)]^\G, r).
 \end{align}
 Meanwhile, using Equation~\eqref{Eq:hatR} and the fact that $g^\G$ is $\tau$-invariant, we have
 \[
  g^\G(\widehat{R}(x,y), \tau(r)) = g^\G(R(x,y) - (d_{\CE}\varphi)(x,y) + [\varphi(x),\varphi(y)]^\G, r).
 \]
 Thus,
 \begin{align}
   &\quad (\widehat{\nabla}_bC_{\widehat{\nabla}})(x, y,\tau(r)) \notag\\
   &= j(b) g^\G(\widehat{R}(x,y),\tau(r)) - g^\G(\widehat{R}(x,y), \tau(\nabla^B_br))  - g^\G(\widehat{R}(\nabla_b^B x, y)+ \widehat{R}(x, \nabla^B_b y), \tau(r)) \notag \\
   &= (\nabla_bC_\nabla)(x,y,r) - g^\G(\nabla^B_b(d_{\CE}\varphi)(x,y), r) \notag \\
    &\quad + g^\G(\nabla_b^B[\varphi(x),\varphi(y)]^\G - [\varphi(\nabla^B_bx), \varphi(y)]^\G - [\varphi(x), \varphi(\nabla^B_b y)]^\G, r).\label{Eq: nablaCnabla xyr}
 \end{align}
 By a direct computation, we also have
 \begin{equation}\label{Eq: dnablaphi}
   R^\nabla(x,b)\varphi(y) - R^\nabla(y,b)\varphi(x) = d_{\CE}(\nabla^B_b \varphi)(x,y) - \nabla_b^B(d_{\CE}\varphi)(x,y).
 \end{equation}
Applying Equations~\eqref{Eq: gRanti},\eqref{Eq: nablahatxyr},\eqref{Eq: nablaCnabla xyr} and~\eqref{Eq: dnablaphi} to Equation~\eqref{Eq: 3case}, we obtain
 \begin{align*}
 (\delta_L^\ast \widehat{d_T}(b[-2]))(x[1], y[1], r[1]) &= d_T(b[-2])(x[1],y[1],r[1]) + g^\G(r, d_{\CE}( \nabla^B_b \varphi)(x,y)) \\
    &= d_T(b[-2])(x[1],y[1],r[1]) + d_{\CE}(\gamma)(b[-2], x[1], y[1], r[1]).
 \end{align*}

$(4)$ For all $b \in \Gamma(B), x,y,z \in \Gamma(F)$, we have
 \begin{align}\label{Eq: 4case}
  &\quad  (\delta_L^\ast \widehat{d_T}(b[-2]))(x[1],y[1],z[1]) = \widehat{d_T}(b[-2])(x[1] + \tau(\varphi(x))[1], y[1] + \tau(\varphi(y))[1], z[1] + \tau(\varphi(z))[1]) \notag \\
  &= -(\widehat{\nabla}_b C^\G)(\tau(\varphi(x)),\tau(\varphi(y)),\tau(\varphi(z))) -  \frac{1}{2}(\widehat{\nabla}_b \widehat{H})(x,y,z) \notag \\
  &\quad +\frac{1}{2}g^\G(R^{\widehat{\nabla}}(x,b)\tau(\varphi(y)) - R^{\widehat{\nabla}}(y,b)\tau(\varphi(x)), \tau(\varphi(z))) -  (\widehat{\nabla}_b C_{\widehat{\nabla}})(x,y,\tau(\varphi(z))) + c.p..
 \end{align}
 The computation in step $(1)$ shows that
 \begin{align}\label{Eq: phixyz}
   &\quad (\widehat{\nabla}_b C^\G)(\tau(\varphi(x)),\tau(\varphi(y)),\tau(\varphi(z))) \notag \\
   &= g^\G(\nabla^B_b[\varphi(x),\varphi(y)]^\G - [\nabla^B_b\varphi(x),\varphi(y)]^\G -  [\varphi(x),\nabla^B_b\varphi(y)]^\G, \varphi(z)).
 \end{align}
 Using Equation~\eqref{Eq:hatH}, we have
 \begin{align}\label{Eq:nablahatH}
   &\quad \frac{1}{2}(\widehat{\nabla}_b \widehat{H})(x,y,z) \notag \\
   &= \frac{1}{2}(\nabla_b H)(x,y,z) - \nabla^B_b(d_{\CE}\beta)(x,y,z) - (\nabla_bC_\nabla)(x,y,\varphi(z)) - g^\G(R(x,y), (\nabla^B_b \varphi)(z)) - c.p.\notag\\
   &\; - g^\G(\nabla^B_b[\varphi(x),\varphi(y)]^\G - [\varphi(\nabla^B_bx), \varphi(y)]^\G - [\varphi(x), \varphi(\nabla^B_b y)]^\G, \varphi(z)) - g^\G([\varphi(x),\varphi(y)]^\G, (\nabla^B_b \varphi)(z)) \notag \\
   &\; +\frac{1}{2}\left(j(b)g^\G((d_{\CE}\varphi)(x,y), \varphi(z)) - g^\G((d_{\CE}\varphi)(x,y), \varphi(\nabla^B_bz))\right) \notag \\
   &\;-\frac{1}{2}\left(g^\G((d_{\CE}\varphi)(\nabla_b^B x, y), \varphi(z)) + g^\G((d_{\CE}\varphi)(x, \nabla^B_b y), \varphi(z)) \right) + c.p..
 \end{align}
 By Equations~\eqref{Eq: gRanti} and~\eqref{Eq: dnablaphi}, we have
 \begin{align}
   &\quad g^\G(R^{\widehat{\nabla}}(x,b)\tau(\varphi(y)) - R^{\widehat{\nabla}}(y,b)\tau(\varphi(x)), \tau(\varphi(z))) \notag \\
   &= g^\G(2\nabla^B_b[\varphi(x),\varphi(y)]^\G - [\varphi(x), \nabla^B_b\varphi(y)+\varphi(\nabla^B_b y)]^\G + [\varphi(y), \nabla^B_b\varphi(x)+\varphi(\nabla^B_b x)]^\G, \varphi(z)) \notag \\ \label{Eq: Rxyz}
   &\quad + g^\G(d_{\CE}(\nabla^B_b \varphi)(x,y) - \nabla_b^B(d_{\CE}\varphi)(x,y), \varphi(z)).
 \end{align}
 By Equation~\eqref{Eq: nablaCnabla xyr}, we also have
 \begin{align}
   &\quad  (\widehat{\nabla}_b C_{\widehat{\nabla}})(x,y,\tau(\varphi(z))) \notag \\
   &= (\nabla_bC_\nabla)(x,y, \varphi(z)) - g^\G(\nabla^B_b(d_{\CE}\varphi)(x,y), \varphi(z)) \notag \\
    &\quad + g^\G(\nabla_b^B[\varphi(x),\varphi(y)]^\G - [\varphi(\nabla^B_bx), \varphi(y)]^\G - [\varphi(x), \varphi(\nabla^B_b y)]^\G, \varphi(z)). \label{Eq: nablaCnabla xyphiz}
 \end{align}
 Substituting Equations~\eqref{Eq: phixyz},\eqref{Eq:nablahatH}, \eqref{Eq:    Rxyz} and \eqref{Eq: nablaCnabla xyphiz} into Equation~\eqref{Eq: 4case}, we obtain
 \begin{align*}
     (\delta_L^\ast &\widehat{d_T}(b[-2]))(x[1],y[1],z[1]) = -\frac{1}{2}(\nabla_bH)(x,y,z) + \nabla^B_b(d_{\CE}\beta)(x,y,z) \\
     &\; + g^\G\left(R(x,y)-\frac{1}{2}(d_{\CE}\varphi)(x,y), (\nabla^B_b\varphi)z\right) + g^\G\left( d_{\CE}(\nabla^B_b\varphi)(x,y), \frac{1}{2}\varphi(z)\right) + c.p. \\
   &= d_T(b[-2])(x[1],y[1],z[1]) + (d_{\CE}\gamma) (b[-2],x[1],y[1],z[1]).
 \end{align*}
 Here we have used the fact that $\nabla^B_b(d_{\CE}\beta)(x,y,z) = d_{\CE}(\nabla^B_b\beta)(x,y,z)$, which follows from a straightforward computation.
\end{proof}

\begin{bibdiv}
  \begin{biblist}
\bib{AC}{article}{
   author={Abad, Camilo Arias},
   author={Crainic, Marius},
   title={Representations up to homotopy of Lie algebroids},
   journal={J. Reine Angew. Math.},
   volume={663},
   date={2012},
   pages={91--126},
}

\bib{AX}{article}{
author={Alekseev, Anton},
author={Xu, Ping},
title={Derived brackets and Courant  algebroids},
eprint={http://www.math.psu.edu/ping/anton-final.pdf},
date={2001},
}

\bib{AKSZ}{article}{
  author={Alexandrov, Mikhail},
  author={Schwarz, Albert},
  author={Zaboronsky, Oleg},
  author={Kontsevich, Maxim},
  title={The geometry of the master equation and topological quantum field theory},
 journal={Internat. J. Modern Phys. A},
  volume={12},
  date={1997},
  number={7},
  pages={1405--1429},
}

\bib{BCSX}{article}{
   author={Bandiera, Ruggero},
   author={Chen, Zhuo},
   author={Sti\'{e}non, Mathieu},
   author={Xu, Ping},
   title={Shifted Derived Poisson Manifolds Associated with Lie Pairs},
   journal={Comm. Math. Phys.},
   volume={375},
   date={2020},
   number={3},
   pages={1717--1760},
}

\bib{BV}{article}{
   author={Batalin,  Igor A.},
   author={Vilkovisky,  Grigorii A.},
   title={Gauge algebra and quantization},
   journal={Phys. Lett. B},
   volume={102},
   date={1981},
   number={1},
   pages={27--31},
}

  \bib{BR2018}{article}{
   author={Bressler, Paul},
   author={Rengifo, Camilo},
   title={On higher-dimensional Courant algebroids},
   journal={Lett. Math. Phys.},
   volume={108},
   date={2018},
   number={9},
   pages={2099--2137},
}

\bib{BMZ}{article}{
   author={Bonechi, Francesco},
   author={Mn\"{e}v, Pavel},
   author={Zabzine, Maxim},
   title={Finite-dimensional AKSZ-BV theories},
   journal={Lett. Math. Phys.},
   volume={94},
   date={2010},
   number={2},
   pages={197--228},
}

\bib{CabrasVinogradov1992}{article}{
	author={Cabras,  Antonella},
	author={Vinogradov,  Alexandre M.},
	title={Extensions of the Poisson bracket to differential forms and
		multi-vector fields},
	journal={J. Geom. Phys.},
	volume={9},
	date={1992},
	number={1},
	pages={75--100},
	issn={0393-0440},
}

  \bib{CLX}{article}{
   author={Cai, Xiongwei},
   author={Liu, Zhangju},
   author={Xiang, Maosong},
   title={Cohomology of hemistrict Lie 2-algebras},
   journal={Comm. Algebra},
   volume={48},
   date={2020},
   number={8},
   pages={3315--3341},
}

\bib{CQZ}{article}{
   author={Cattaneo, Alberto S.},
   author={Qiu, Jian},
   author={Zabzine, Maxim},
   title={2D and 3D topological field theories for generalized complex
   geometry},
   journal={Adv. Theor. Math. Phys.},
   volume={14},
   date={2010},
   number={2},
   pages={695--725},
}

\bib{CSX}{article}{
   author={Chen, Zhuo},
   author={Sti\'{e}non, Mathieu},
   author={Xu, Ping},
   title={On regular Courant algebroids},
   journal={J. Symplectic Geom.},
   volume={11},
   date={2013},
   number={1},
   pages={1--24},
   issn={1527-5256},
}

	\bib{MR3661534}{article}{
		author={Cou\'{e}raud, Benjamin},
		title={Dissections and automorphisms of regular Courant algebroids},
		journal={J. Geom. Phys.},
		volume={119},
		date={2017},
		pages={224--255},
		issn={0393-0440},
}

    \bib{C1990}{article}{
   author={Courant, Ted},
   title={Dirac manifolds},
   journal={Trans. Amer. Math. Soc.},
   volume={319},
   date={1990},
   number={2},
   pages={631--661},
}

   \bib{C-W-1986}{article}{
   author={Courant, Ted},
   author={Weinstein, Alan},
   title={Beyond Poisson structures},
   conference={
      title={Action hamiltoniennes de groupes. Troisi\`eme th\'{e}or\`eme de Lie},
      address={Lyon},
      date={1986},
   },
   book={
      series={Travaux en Cours},
      volume={27},
      publisher={Hermann, Paris},
   },
   date={1988},
   pages={39--49},
}

\bib{CM2019}{article}{
   author={Cueca, Miquel},
   author={Mehta, Rajan Amit},
   title={Courant Cohomology, Cartan Calculus, Connections, Curvature,
   Characteristic Classes},
   journal={Comm. Math. Phys.},
   volume={381},
   date={2021},
   number={3},
   pages={1091--1113},
}

  \bib{D2015}{article}{
   author={Deser, Andreas},
   title={Star products on graded manifolds and $\alpha'$-corrections to
   Courant algebroids from string theory},
   journal={J. Math. Phys.},
   volume={56},
   date={2015},
   number={9},
   pages={092302, 18},
}

  \bib{DS2015}{article}{
   author={Deser, Andreas},
   author={Stasheff, Jim},
   title={Even symplectic supermanifolds and double field theory},
   journal={Comm. Math. Phys.},
   volume={339},
   date={2015},
   number={3},
   pages={1003--1020},
}

  \bib{D1993}{book}{
   author={Dorfman, Irene},
   title={Dirac structures and integrability of nonlinear evolution
   equations},
   series={Nonlinear Science: Theory and Applications},
   publisher={John Wiley \& Sons, Ltd., Chichester},
   date={1993},
   pages={xii+176},
   isbn={0-471-93893-9},
}

  \bib{GM2009}{article}{
   author={Ginot, Gr\'{e}gory},
   author={Gr\"{u}tzmann, Melchior},
   title={Cohomology of Courant algebroids with split base},
   journal={J. Symplectic Geom.},
   volume={7},
   date={2009},
   number={3},
   pages={311--335},
}

\bib{GsM}{article}{
   author={Gracia-Saz, Alfonso},
   author={Mehta, Rajan Amit},
   title={Lie algebroid structures on double vector bundles and
   representation theory of Lie algebroids},
   journal={Adv. Math.},
   volume={223},
   date={2010},
   number={4},
   pages={1236--1275},
}

 \bib{GLL}{article}{
   author={Grady, Ryan E.},
   author={Li, Qin},
   author={Li, Si},
   title={Batalin-Vilkovisky quantization and the algebraic index},
   journal={Adv. Math.},
   volume={317},
   date={2017},
   pages={575--639},
   issn={0001-8708},
}

\bib{GMP}{article}{
   author={Gr\"{u}tzmann, Melchior},
   author={Michel, Jean-Philippe},
   author={Xu, Ping},
   title={Weyl quantization of degree 2 symplectic graded manifolds},
   language={English, with English and French summaries},
   journal={J. Math. Pures Appl. (9)},
   volume={154},
   date={2021},
   pages={67--107},
   issn={0021-7824},
}

\bib{GS}{article}{
   author={Gr\"{u}tzmann, Melchior},
   author={Sti\'{e}non, Mathieu},
   title={Matched pairs of Courant algebroids},
   journal={Indag. Math. (N.S.)},
   volume={25},
   date={2014},
   number={5},
   pages={977--991},
   issn={0019-3577},
}
	
  \bib{M.Gualtieri2011}{article}{
   author={Gualtieri, Marco},
   title={Generalized complex geometry},
   journal={Ann. of Math. (2)},
   volume={174},
   date={2011},
   number={1},
   pages={75--123},
   issn={0003-486X},
}

  \bib{N.Hitchin2003}{article}{
   author={Hitchin, Nigel},
   title={Generalized Calabi-Yau manifolds},
   journal={Q. J. Math.},
   volume={54},
   date={2003},
   number={3},
   pages={281--308},
   issn={0033-5606},
}

  \bib{HP2004}{article}{
   author={Hofman, Christiaan},
   author={Park, Jae-Suk},
   title={BV quantization of topological open membranes},
   journal={Comm. Math. Phys.},
   volume={249},
   date={2004},
   number={2},
   pages={249--271},
   issn={0010-3616},
}

  \bib{HZ2009}{article}{
   author={Hull, Chris},
   author={Zwiebach, Barton},
   title={The gauge algebra of double field theory and Courant brackets},
   journal={J. High Energy Phys.},
   date={2009},
   number={9},
   pages={090, 26},
   issn={1126-6708},
}

  \bib{Ikeda2003}{article}{
   author={Ikeda, Noriaki},
   title={Chern-Simons gauge theory coupled with BF theory},
   journal={Internat. J. Modern Phys. A},
   volume={18},
   date={2003},
   number={15},
   pages={2689--2701},
   issn={0217-751X},
}

  \bib{Ikeda2001}{article}{
   author={Ikeda, Noriaki},
   title={Deformation of BF theories, topological open membrane and a
   generalization of the star deformation},
   journal={J. High Energy Phys.},
   date={2001},
   number={7},
   pages={Paper 37, 21},
   issn={1126-6708},
}

\bib{Jotz0}{article}{
   author={Jotz Lean, M.},
   title={Dorfman connections and Courant algebroids},
   language={English, with English and French summaries},
   journal={J. Math. Pures Appl. (9)},
   volume={116},
   date={2018},
   pages={1--39},
   issn={0021-7824},
}

\bib{Jotz1}{article}{
   author={Jotz Lean, M.},
   title={The geometrization of $\mathbb{N}$-manifolds of degree 2},
   journal={J. Geom. Phys.},
   volume={133},
   date={2018},
   pages={113--140},
   issn={0393-0440},
}

  \bib{JV2016}{article}{
   author={Jur\v{c}o, Branislav},
   author={Vysok\'{y}, Jan},
   title={Heterotic reduction of Courant algebroid connections and
   Einstein-Hilbert actions},
   journal={Nuclear Phys. B},
   volume={909},
   date={2016},
   pages={86--121},
   issn={0550-3213},
}

\bib{JRSW}{article}{
	author={Jur\v{c}o, Branislav},
	author={Raspollini, Lorenzo},
	author={S\"{a}mann, Christian},
	author={Wolf, Martin},
	title={$L_\infty$-algebras of classical field theories and the
		Batalin-Vilkovisky formalism},
	journal={Fortschr. Phys.},
	volume={67},
	date={2019},
	number={7},
	pages={1900025, 60},
	issn={0015-8208},
}

\bib{KW}{article}{
   author={Keller, Frank},
   author={Waldmann, Stefan},
   title={Deformation theory of Courant algebroids via the Rothstein
   algebra},
   journal={J. Pure Appl. Algebra},
   volume={219},
   date={2015},
   number={8},
   pages={3391--3426},
   issn={0022-4049},
}

\bib{YKS1992}{article}{
   author={Kosmann-Schwarzbach, Yvette},
   title={Jacobian quasi-bialgebras and quasi-Poisson Lie groups},
   conference={
      title={Mathematical aspects of classical field theory},
      address={Seattle, WA},
      date={1991},
   },
   book={
      series={Contemp. Math.},
      volume={132},
      publisher={Amer. Math. Soc., Providence, RI},
   },
   date={1992},
   pages={459--489},
}

\bib{YKS}{article}{
   author={Kosmann-Schwarzbach, Yvette},
   title={From Poisson algebras to Gerstenhaber algebras},
   language={English, with English and French summaries},
   journal={Ann. Inst. Fourier (Grenoble)},
   volume={46},
   date={1996},
   number={5},
   pages={1243--1274},
   issn={0373-0956},
}

\bib{MR3033556}{article}{
	author={Kosmann-Schwarzbach, Yvette},
	title={Courant algebroids. A short history},
	journal={SIGMA Symmetry Integrability Geom. Methods Appl.},
	volume={9},
	date={2013},
	pages={Paper 014, 8},
}

\bib{MR2507112}{article}{
	author={Li-Bland, David},
	author={Meinrenken, Eckhard},
	title={Courant algebroids and Poisson geometry},
	journal={Int. Math. Res. Not. IMRN},
	date={2009},
	number={11},
	pages={2106--2145},
}

\bib{L-W-X1997}{article}{
   author={Liu, Zhang-Ju},
   author={Weinstein, Alan},
   author={Xu, Ping},
   title={Manin triples for Lie bialgebroids},
   journal={J. Differential Geom.},
   volume={45},
   date={1997},
   number={3},
   pages={547--574},
   issn={0022-040X},
}

\bib{LS}{article}{
   author={Lyakhovich, Simon L.},
   author={Sharapov, Alexey A.},
   title={Characteristic classes of gauge systems},
   journal={Nuclear Phys. B},
   volume={703},
   date={2004},
   number={3},
   pages={419--453},
   issn={0550-3213},
}

  \bib{Park2001}{article}{
   author={Park, Jae-Suk},
   title={Topological open p-branes},
   conference={
      title={Symplectic geometry and mirror symmetry},
      address={Seoul},
      date={2000},
   },
   book={
      publisher={World Sci. Publ., River Edge, NJ},
   },
   date={2001},
   pages={311--384},
}

\bib{Real}{article}{
	AUTHOR = {Real, Pedro},
	TITLE = {Homological perturbation theory and associativity},
	JOURNAL = {Homology Homotopy Appl.},
	VOLUME = {2},
	YEAR = {2000},
	PAGES = {51--88},
}

\bib{Rothstein}{article}{
   author={Rothstein, Mitchell},
   title={The structure of supersymplectic supermanifolds},
   conference={
      title={Differential geometric methods in theoretical physics},
      address={Rapallo},
      date={1990},
   },
   book={
      series={Lecture Notes in Phys.},
      volume={375},
      publisher={Springer, Berlin},
   },
   date={1991},
   pages={331--343},
}

\bib{Roy0}{article}{
   author={Roytenberg, Dmitry},
   author={Weinstein, Alan},
   title={Courant algebroids and strongly homotopy Lie algebras},
   journal={Lett. Math. Phys.},
   volume={46},
   date={1998},
   number={1},
   pages={81--93},
   issn={0377-9017},
}	

 \bib{Roytenberg2001}{article}{
   author={Roytenberg, Dmitry},
   title={On the structure of graded symplectic supermanifolds and Courant
   algebroids},
   conference={
      title={Quantization, Poisson brackets and beyond},
      address={Manchester},
      date={2001},
   },
   book={
      series={Contemp. Math.},
      volume={315},
      publisher={Amer. Math. Soc., Providence, RI},
   },
   date={2002},
   pages={169--185},
}

  \bib{Roy2007}{article}{
   author={Roytenberg, Dmitry},
   title={AKSZ-BV formalism and Courant algebroid-induced topological field
   theories},
   journal={Lett. Math. Phys.},
   volume={79},
   date={2007},
   number={2},
   pages={143--159},
   issn={0377-9017},
}

\bib{Roy2009}{article}{
   author={Roytenberg, Dmitry},
   title={Courant-Dorfman algebras and their cohomology},
   journal={Lett. Math. Phys.},
   volume={90},
   date={2009},
   number={1-3},
   pages={311--351},
   issn={0377-9017},
}

 \bib{S2005}{article}{
 author={\u{S}evera, Pavol},
 title={Some title containing the words ``homotopy" and ``symplectic", e.g. this one},
 conference={title={Travaux mathematiques. Fasc. XVI},},
 book={
      series={Trav. Math.},
      volume={16},
      publisher={Univ. Luxemb., Luxembourg},
 },
date={2005},
pages={121--137},
}

\bib{S2015}{article}{
   author={\u{S}evera, Pavol},
   title={Poisson-Lie T-duality and Courant algebroids},
   journal={Lett. Math. Phys.},
   volume={105},
   date={2015},
   number={12},
   pages={1689--1701},
   issn={0377-9017},
}

  \bib{SV2017}{article}{
   author={\v{S}evera, Pavol},
   author={Valach, Fridrich},
   title={Ricci flow, Courant algebroids, and renormalization of Poisson-Lie
   T-duality},
   journal={Lett. Math. Phys.},
   volume={107},
   date={2017},
   number={10},
   pages={1823--1835},
   issn={0377-9017},
}

  \bib{SX2008}{article}{
   author={Sti\'{e}non, Mathieu},
   author={Xu, Ping},
   title={Modular classes of Loday algebroids},
   language={English, with English and French summaries},
   journal={C. R. Math. Acad. Sci. Paris},
   volume={346},
   date={2008},
   number={3-4},
   pages={193--198},
   issn={1631-073X},
}

\bib{Uchino}{article}{
   author={Uchino, Kyousuke},
   title={Remarks on the definition of a Courant algebroid},
   journal={Lett. Math. Phys.},
   volume={60},
   date={2002},
   number={2},
   pages={171--175},
   issn={0377-9017},
}

  \bib{U2013}{article}{
   author={Uribe, Bernardo},
   title={Group actions on dg-manifolds and exact Courant algebroids},
   journal={Comm. Math. Phys.},
   volume={318},
   date={2013},
   number={1},
   pages={35--67},
   issn={0010-3616},
}

  \end{biblist}
\end{bibdiv}
\end{document}